        \pgfplotsset{compat = 1.3}
        \pgfplotsset{minor grid style={dotted}}
        \pgfplotsset{major grid style={dashed}}
        \pgfplotsset{every x tick label/.append style={font=\footnotesize, yshift=0.25ex}}
        \pgfplotsset{every y tick label/.append style={font=\footnotesize, xshift=0.25ex}}
\newtheorem{theorem}{Theorem}
\newtheorem{remark}{Remark}
\newtheorem{Lemma}{Lemma}
\newtheorem{corollary}{Corollary}
\numberwithin{equation}{section}
\def\tabstretch{\rule{0pt}{1.03\normalbaselineskip}}
\begin{document}

\begin{frontmatter}

\title{Fast solution of a phase-field model of pitting corrosion}

\tnotetext[t1]{\bf \small This is the authors' version of the work published in:  G. Frasca-Caccia, D. Conte, B. Paternoster. Fast solution of a phase-field model of pitting corrosion. {\em J. Comput. Phys.}, 553, 114717, 2026. \url{https://doi.org/10.1016/j.jcp.2026.114717}.}

\author{Gianluca Frasca-Caccia\corref{cor1}}
\cortext[cor1]{Corresponding author}
\ead{gfrascacaccia@unisa.it}
\author{Dajana Conte}
\ead{dajconte@unisa.it}
\author{Beatrice Paternoster}
\ead{beapat@unisa.it}
\address{Department of Mathematics, University of Salerno,
Via Giovanni Paolo II n. 132, 84084 Fisciano (SA), Italy}

\begin{abstract}
Excessive computational times represent a major challenge in the solution of corrosion models, limiting their practical applicability, e.g., as a support to predictive maintenance. In this paper, we propose an efficient strategy for solving a phase-field model for metal corrosion. Based on the Kronecker structure of the diffusion matrix in classical finite difference approximations on rectangular domains, time-stepping IMEX methods are efficiently solved in matrix form. However, when the domain is non-rectangular, the lack of the Kronecker structure prevents the direct use of the matrix-based approach. To address this issue, we reformulate the problem on an extended rectangular domain and introduce suitable iterative IMEX methods. The convergence of the iterations and the propagation of the numerical errors are analyzed. Test cases on two and three dimensional domains show that the proposed approach achieves accuracy comparable to existing methods, while significantly reducing the computational time, to the point of allowing actual predictions on standard workstations. 
\end{abstract}
\begin{keyword}

Corrosion model \sep Phase-field \sep IMEX methods \sep Sylvester equations \sep Reaction-diffusion PDEs.

\MSC  65M20 \sep 65L04 \sep 65F10 \sep 65F45 \sep 65Z05.

\end{keyword}

\end{frontmatter}

\section{Introduction}

Corrosion is a pervasive issue with far-reaching economic and structural implications. According to recent studies, the cost of corrosion across multiple nations over the past fifty years has been equivalent to approximately 3\%-4\% of each country's Gross Domestic Product (GDP) \citep{KOCH}. It is estimated that the implementation of corrosion control practices could result in savings ranging from 15\% to 35\% of these costs \citep{KOCH}. With the reference cost of corrosion representing approximately 3.4\% of global GDP in 2020, this could translate to potential annual savings of between US \$433 billion and \$1,000 billion. These figures underscore the urgency of addressing corrosion, not only to prevent the loss of valuable resources but also to protect the structural integrity of critical infrastructure.

Metal surfaces are often protected by a thin, invisible oxide film that prevents corrosive damage. However, physical factors such as mechanical wear, abrasion, or collision can damage this protective film, leaving the metal vulnerable to environmental exposure. This damage can lead to both localized and non-localized corrosion, with the former being the most harmful due to its ability to cause sudden, catastrophic structural failure \citep{ansari}.  Localized corrosion, particularly in the form of pitting corrosion, is especially insidious because it can remain undetected for extended periods while progressively weakening the material. Pitting corrosion typically occurs under specific environmental conditions, such as areas with low oxygen concentration or high concentrations of aggressive electrolytes like chloride. When the protective film is disrupted, the uncovered area becomes anodic and the exposed metal becomes cathodic, triggering a galvanic reaction that further exacerbates corrosion in the localized area. The main challenge with pitting corrosion lies in its detection: since it attacks the deep structure of the material while leaving minimal visible damage on the surface, identifying and assessing the full extent of the damage is often extremely difficult. Additionally, corrosion products can accumulate over the pits, further concealing the damage and complicating detection efforts.

The ability to effectively predict, monitor, and prevent corrosion is critical for minimizing its economic impact and ensuring the longevity of materials used in industrial and structural applications. As such, the development of advanced mathematical models and computational tools to understand the underlying mechanisms of corrosion is crucial. Through the use of computational simulations, researchers can model and analyze corrosion processes in a controlled environment, providing valuable insights into the factors that influence material degradation in specific settings. These tools not only enhance our understanding of corrosion but also enable the development of more effective material protection techniques and predictive maintenance strategies, which are essential for cost-effective corrosion management.

A variety of analytical and computational models have been developed to study pitting corrosion, with a particular focus on simulating the movement of the corrosive front. The movement of this boundary is a defining feature of corrosion phenomena, and accurately simulating the progression of this boundary is one of the most challenging aspects of corrosion modeling. Based on the treatment of this moving boundary, corrosion models can generally be classified into two categories: non-autonomous models and autonomous models \citep{jafar}.

Non-autonomous models, which are more traditional, typically rely on transport equations that are solved using finite element (FE) or finite difference (FD) methods. These models require additional strategies to simulate the motion of the corrosion front, often necessitating remeshing the domain or the use of moving boundary techniques \citep{duddu,vagb,sun}. Thus, while effective, these methods can be computationally demanding \citep{jafar}. On the other hand, autonomous models, which represent a more modern approach, treat the evolution of the growing pits autonomously, without the need for explicit tracking of the boundary. Notable examples of these models are based on finite volume methods, cellular automata, peridynamic and phase-field formulations.

Phase-field models introduce dimensionless phase-field variables that evolve autonomously according to the minimization of a free energy functional, on the basis of thermodynamic principles. They are particularly advantageous for their inherent numerical stability, that makes them an ideal choice for simulating corrosion phenomena \citep{ansari}. However, the primary limitation of phase-field models lies in their computational cost, as they are based on systems of coupled and stiff partial differential equations (PDEs). Recent works on the numerical solution of phase-field models have considered monolithic implicit time integrators avoiding the need for remeshing techniques \citep{Valizadeh, Wen}. Nevertheless, as is typical for implicit methods, the resulting nonlinear systems of equations need to be solved iteratively using a Newton-Raphson type method. Advanced approaches have also considered semi-implicit schemes that preserve mass and ensure energy dissipation \citep{Chen,Zhang}. Nevertheless, the governing equations often need to be solved over large domains and extended time periods, leading to high computational demands \citep{bahram,jafar,choudhuri}.

Indeed, most kind of models for corrosion listed above have a common drawback: their solution is computationally expensive. Even reliable and accurate simulations are useless if the computations require excessively long times. Specifically, models based on finite volume methods require the implementation of a phase strategy to autonomously follow the pit growth \citep{cui,scheiner}. Peridynamic formulations are based on the solution of nonlocal versions of diffusion equations \citep{bobaru}. Although the non locality of the model offers a more complete description of the corrosive deterioration, its numerical treatment makes the cost of computations very high \citep{jafar}. Cellular automata models evolve on the basis of specific probabilistic rules and random-walk processes \citep{stafiej}. As these models are not PDE based, they have low computational costs. However, due to the lack of time-dependent dynamics the predictive capability of these models is questionable \citep{ansari,jafar}.

The development of new, more efficient numerical methods that can reduce computational costs while maintaining accuracy is critical for advancing corrosion modeling and facilitating its application in industries that rely on predictive maintenance. Quoting \citep[§5.5]{jafar} ``any new numerical method that reduces the computational cost for corrosion models can make a large impact''.

In light of these challenges, current research is focused on improving the computational efficiency of corrosion models, particularly through the development of novel time-stepping schemes and optimization techniques. While some progress has been made in reducing computational times, e.g., by using adaptive meshing strategies, simulations involving complex geometries or large domains remain highly resource-intensive.

This issue is especially relevant given the substantial economic impact of corrosion, as mentioned at the beginning of this section, highlighting the need for predictive tools that can be used in industrial environments. However, the high computational cost of conventional corrosion solvers has so far limited their practical use for real-time maintenance planning. In response, this work is devoted to the efficient solution of the phase-field model in \citep{mai}, which presents a system of two coupled PDEs to simulate pitting corrosion of 304 stainless steel. By developing a matrix-oriented IMEX strategy, we address the computational bottleneck, enabling accurate simulations within time frames suitable for predictive maintenance. This contributes to the broader goal of making corrosion prediction, prevention, and management more accessible and impactful. In particular, being able to run phase-field simulations within timeframes compatible with industrial needs enables the integration of accurate corrosion models into predictive maintenance strategies. This, in turn, facilitates faster decision-making, minimizes downtime, and promotes cost-effective asset management in industrial settings.

The plan of the paper is as follows. In Section \ref{sec:problem} we present the pitting corrosion model in \citep{mai}. In Section \ref{sec:rect} we introduce a matrix-oriented approach for the solution of the problem on rectangular domains. The equations are first discretized in space by finite differences and then integrated by first- and second-order accurate IMEX methods. The large systems to solve at each step are then reformulated as Sylvester equations. These are efficiently solved by the matrix-oriented technique in \citep{Simoncini}. To tackle scenarios more frequently encountered in practical applications, in Section~\ref{sec:holes} we introduce appropriate IMEX methods for domains with holes. Their efficient implementation by the matrix-oriented approach is discussed in Section \ref{sec:iterative}, where specific iterative procedures are introduced and their convergence is analyzed together with a rigorous study of error propagation. In Section \ref{sec:tests} the efficiency of the methods is compared to that of the most efficient methods that have been proposed for the solution of this model, and with another efficient IMEX method in the literature \citep{Ascher} that has been recently considered for the solution of phase-field models \citep{Fu,Zhang}. It is shown that not only the new methods are significantly cheaper in terms of computational execution times, but that on a standard computer they are able to make actual predictions with the same level of accuracy. Further numerical experiments are proposed to confirm the theoretical convergence results, and additional tests illustrate the performance of the proposed methods in a three-dimensional setting. Finally, Section \ref{sec:concl} reports some conclusive remarks.

\section{The problem}\label{sec:problem}

The following system of PDEs,
\begin{align}\label{AC}
\phi_t=&\,D_\phi \Delta\phi+F_1(\phi,c),\\\label{CH}
c_t=&\, D_c\Delta(c+F_2(\phi)),
\end{align}
where
\begin{align}\label{F1F2}
F_1(\phi,c)=&\,2AL(1-c_L)[c-h(\phi)(1-c_L)-c_L]h'(\phi)-\omega Lg'(\phi),\qquad F_2(\phi)=(c_L-1)h(\phi),
\end{align}
and
\begin{equation}\label{hg}
g(\phi)=\phi^2(1-\phi)^2,\qquad  h(\phi)=-2\phi^3+3\phi^2.
\end{equation}
has been derived in \citep{mai} by variational principles as a model for a corrosion system composed by a metal immersed in an electrolyte solution. 

It is a phase-field model that couples an Allen-Cahn equation \eqref{AC} with a Cahn-Hilliard equation \eqref{CH} that govern the phase transition and the variation of the molar concentration of dissolved ions, respectively.

We assume that the vector of the space variables $\mathbf{x}$ belongs to the set $\Omega\subset \mathbb{R}^n$, with $n\in\{2,3\}$ and $t\in[0,T]$.

The two dependent variables $\phi$ and $c$ are such that $(\phi(\mathbf{x},t),c(\mathbf{x},t))\in[0,1]^2$. Function $\phi$ is the phase-field variable and equation \eqref{AC} models the evolution of the corrosion interface. Function $c$ is the state variable measuring the corrosion level of point $(x,y)$ at time $t$. In particular, $c(\mathbf{x},t)=0$ if point $\mathbf{x}$ has been completely corroded at time $t$, whereas $c(\mathbf{x},t)=1$ if point $\mathbf{x}$ has not been attacked by corrosion at time $t$. A value in the range between 0 and 1 is assigned to intermediate levels of corrosion. 

The physical interpretation and values of the parameters in \eqref{AC}--\eqref{hg} are listed in Table \ref{tab:parameters}. These parameters, reported in \citep{scheiner2} have been used in \citep{mai} for the validation of this problem as a model for the corrosion of stainless steel exposed to a NaCl electrolyte solution. In particular, we have calculated 
$$D_\phi= L \alpha_\phi,$$
where $\alpha_\phi$ is the gradient energy coefficient and it is related to the height of the double well potential $\omega$ through the interface energy $\hat\sigma$ and the interface thickness $l$ as follows:
$$\hat\sigma \approx \sqrt{16 \omega \alpha_\phi},\qquad l=\alpha^*\sqrt{\frac{2\alpha_\phi}{\omega}}.$$
As in \citep{GaoFD,GaoFE,mai}, we have set $l=5 \mu$m, $\hat\sigma=10$ J/m$^2$ and the constant parameter $\alpha^*=2.94$, and thus we have computed the values of $\omega$, $\alpha_\phi$ ($\sim 3.01\cdot 10^{-6}$ J$/$m) and $D_\phi$ in Table \ref{tab:parameters}.
\begin{table}[t!]
\centerline{\begin{tabular}{||c||c||c||}
\hline
\hline
Parameter & Physical meaning & Value\\
\hline
$L$ & Interface kinetics coefficient&2 m$^3/$(Js)\\
$A$  & Free energy curvature &$5.35\cdot 10^{7}$ J$/$mol\\
$D_\phi$ & Diffusion coefficient& $6.02\cdot 10^{-6} $ m$^2/$s\\
$D_c$ & Diffusion coefficient& $ 8.50\cdot 10^{-10}$ m$^2/$s\\
$c_L$ & Normalized equilibrium coefficient for liquid state&$3.57\cdot 10^{-2}$\\
$\omega$ & Height of double well potential&$2.08\cdot 10^{6}$ J$/$m$^3$\\
\hline
\hline
\end{tabular}}
\caption{Physical meaning and value of the parameters in model \eqref{AC}--\eqref{hg}}\label{tab:parameters}
\end{table}

While the model has been validated and verified through analytical and experimental data, it also highlights several challenges in its numerical solution. Specifically, the coupled nature of the equations requires extensive computational resources. Additionally, as observed in \citep{GaoFE,GaoFD}, the Allen-Cahn equation exhibits extreme stiffness due to the very large ratio between the reaction term and the diffusion coefficient ($\sim 10^{13}$, see Table \ref{tab:parameters}), which is a major challenge in the solution process. In fact, conventional backward differentiation formulae are impractical as they require to advance by excessively small time steps \citep{GaoFE,GaoFD}.

More advanced and modern time integrators, such as exponential Rosenbrock methods, can advance by larger time steps. However, the computational cost of each iteration is excessively high. For example, several hours of computations are reported in \citep{GaoFD} to simulate a 225-second corrosion process of a rectangular bar on a standard workstation. As shown in \citep{GaoFE}, these times can be reduced by using adaptive meshes in space. However, the computational experiments are still slower than the real phenomena and thus the existing algorithms are far from making actual predictions on standard computers. To the best of our knowledge, these are currently the most efficient numerical methods in the literature for the solution of the corrosion problem \eqref{AC}--\eqref{hg}.

This highlights the need for further advancements in computational techniques for the solution of phase-field models such as \eqref{AC}--\eqref{hg}. Providing an efficient strategy is the main goal of this paper. We start by considering in the next section the simplest case that is, in fact, when the corroding sample is a rectangular bar.

\section{Matrix-oriented approach on rectangular domains}\label{sec:rect}
We assume here that the domain is bi-dimensional and rectangular, i.e., $(x,y)\in \Omega=[0,L_x]\times [0,L_y]$. The extension to 3D domains is provided in Section \ref{sec:3drec}, whereas the case of non-rectangular domains is discussed in Section \ref{sec:holes}.
A suitable initial condition,
\begin{equation}\label{initcon}
\phi(x,y,0)=\phi_0(x,y), \quad c(x,y,0)=c_0(x,y),\qquad (x,y)\in {\Omega}
\end{equation}
is assigned to system \eqref{AC}--\eqref{hg} together with a Dirichlet or Neumann boundary condition on each side of the rectangular domain.
\subsection{Space discretization}
For the space discretization of equations \eqref{AC} and \eqref{CH} we consider a uniform mesh on $\Omega$ defined by the nodes
$$(x_i,y_j)=(i\Delta x,j\Delta y),\qquad i=0,\ldots, m_x+1,\qquad j=0,\ldots,m_y+1,$$
so that
$\Delta x={L_x}/{(m_x+1)}$ and $\Delta y={L_y}/{(m_y+1)}.$
Let ${u}$ be a generic function defined on this spatial mesh. We denote by ${u}_{i,j}={u}_{i,j}(t)$ the approximations of ${u}(x_i,y_j,t)$.
The Laplacian of ${u}$ is approximated by using the standard second-order centered difference formula,
\begin{equation}\label{centdiff}
 \Delta u_{i,j}:=\frac{{u}_{i-1,j}-2{u}_{i,j}+{u}_{i+1,j}}{\Delta x^2}+\frac{{u}_{i,j-1}-2{u}_{i,j}+{u}_{i,j+1}}{\Delta y^2}\approx\Delta {u}(x_i,y_j,t) .
\end{equation}
The set of points  $\{(x_i,y_j),(x_{i-1},y_j),(x_{i+1},y_j),(x_{i},y_{j-1}),(x_{i},y_{j+1})\}$
is called the stencil of approximation \eqref{centdiff} centered at $(x_i,y_j)$ and we say that nodes
$$(x_{i-1},y_j),\quad (x_{i+1},y_j),\quad (x_{i},y_{j-1}),\quad (x_{i},y_{j+1})$$
are the neighbors of node $(x_i,y_j)$.

Let $\boldsymbol{\phi}(t)$ and $\mathbf{c}(t)$ be the vectors of the approximations $\phi_{i,j}(t)$ and $c_{i,j}(t),$ respectively. Here and henceforth, when the approximations on a two dimensional grid are collected into a vector, we assume that they are sorted following a lexicographic order of the nodes. Then, the space discretization of system \eqref{AC}--\eqref{CH} can be cast into the form
\begin{align}\label{ACode}
\frac{\mathrm{d}\boldsymbol{\phi}}{\mathrm{d}t}=&\,D_\phi (\mathcal{M}\boldsymbol{\phi}+\boldsymbol{\psi}_\phi)+F_1(\boldsymbol{\phi},\mathbf{c}),\\\label{CHode}
\frac{\mathrm{d}\mathbf{c}}{\mathrm{d}t}=&\, D_c[\mathcal{M}(\mathbf{c}+F_2(\boldsymbol{\phi}))+\boldsymbol{\psi}_c+\boldsymbol{\psi}_{F_2}],
\end{align}
where functions $F_1$ and $F_2$ are evaluated entrywise on vectors $\boldsymbol{\phi}$ and $\mathbf{c}$, and the vectors $\boldsymbol{\psi}_r=\boldsymbol{\psi}_r(t)$ include boundary contributions depending on function $r\in\{c,\phi,F_2(\phi)\}$.
Matrix $\mathcal{M}$ is the discrete Laplace operator. In the case of Dirichlet boundary conditions it has the Kronecker sum structure,
\begin{equation}\label{kronsum}
\mathcal{M}=I_{m_y}\otimes \mathcal{M}_x+\mathcal{M}_y\otimes I_{m_x},
\end{equation}
where $I_{m}$ denotes the identity matrix of dimension ${m}$ and 
\begin{equation}\label{MD}
\mathcal{M}_r=\frac{1}{\Delta r^2}\left[\begin{array}{ccccc}
-2 & 1 & & &\\
1 & -2 & 1 & &\\
& \ddots & \ddots & \ddots &\\
& & 1 & -2 & 1\\
& & & 1 &-2
\end{array}\right],
\end{equation}
of dimension $m_r$, for $r\in\{x,y\}$. In the case of Neumann boundary conditions matrix $\mathcal{M}$ is still a Kronecker sum of the form \eqref{kronsum} with
\begin{equation}\label{MN}
\mathcal{M}_r=\frac{1}{\Delta r^2}\left[\begin{array}{ccccc}
-2 & 2 & & &\\
1 & -2 & 1 & &\\
& \ddots & \ddots & \ddots &\\
& & 1 & -2 & 1\\
& & & 2 &-2
\end{array}\right],
\end{equation}
but the dimension of all involved matrices is augmented by two to include the approximations at the boundary. Boundary conditions of Dirichlet kind on some edges and of Neumann type on others, can be treated by modifying matrices $\mathcal{M}_x$ and  $\mathcal{M}_y$ accordingly. For the rest of the discussion we assume that the number of unknowns is $m_xm_y$ and so $\mathcal{M}$ is defined as in \eqref{kronsum}.
\subsection{Time integration}
For the time integration of system \eqref{ACode}--\eqref{CHode} we introduce the nodes
\begin{equation*}
t_n=n\Delta t,\qquad n=0,\ldots,N,\qquad \Delta t=T/N,
\end{equation*}
and use implicit-explicit (IMEX) methods. For the sake of simplicity we introduce these methods for a generic initial value problem of the form
\begin{equation}\label{genODE}
\frac{\mathrm{d}\mathbf{u}(t)}{\mathrm{d} t}=\mathcal{M}\mathbf{u}(t)+F(\mathbf{u}(t)),\qquad \mathbf{u}(0)=\mathbf{u}_0\\
\end{equation}
where matrix $\mathcal{M}$ is the Kronecker sum \eqref{kronsum}, and $\mathbf{u}\in\mathbb{R}^{m_xm_y}$. The methods in this section treat implicitly the linear term $\mathcal{M}\mathbf{u}(t)$, and explicitly the nonlinear term $F(\mathbf{u}(t))$. The linear systems obtained at each time step have dimension $m_xm_y$, and thus their solution is burdensome on large domains. By exploiting the Kronecker sum structure of the coefficient matrix, we will provide an equivalent formulation of these systems in Sylvester form, showing how they can be solved highly efficiently by the matrix-oriented strategy in \citep{Simoncini}.
\subsubsection{First-order accuracy}\label{sec:FirstOrder}
 The first method that we consider is the IMEX Euler method. Denoting by $\mathbf{u}^n$ the approximation to $\mathbf{u}(t_n)$, the IMEX Euler method applied to system \eqref{genODE} amounts to
$$\mathbf{u}^{n+1}-\mathbf{u}^n=\Delta t(\mathcal{M}\mathbf{u}^{n+1}+F(\mathbf{u}^{n})),$$
or, equivalently,
\begin{equation}\label{genIMEX}
(I_{m_xm_y}-\Delta t\mathcal{M})\mathbf{u}^{n+1}=\mathbf{u}^n+\Delta t F(\mathbf{u}^n).
\end{equation}
For the efficient implementation of IMEX Euler method applied to problem \eqref{genODE} we follow the matrix-oriented approach in \citep{Simoncini} and define the matrix $U^n\in\mathbb{R}^{m_x\times m_y}$ such that 
\begin{equation}\label{matrU}
\mathbf{u}^n=\mathrm{vec}(U^n),
\end{equation}
where ``$\mathrm{vec}$'' denotes the vectorization operation that converts a matrix into a vector, by stacking its columns on top of one another. With $\mathcal{M}$ in \eqref{kronsum}, it holds that
$$\mathcal{M}\mathbf{u}^{n+1}=\mathrm{vec}(\mathcal{M}_xU^{n+1}+U^{n+1}\mathcal{M}_y^T).$$
Thus, method \eqref{genIMEX} can be equivalently written in matrix form as
\begin{align}\label{Sylv}
(I_{m_x}-\Delta t \mathcal{M}_x)U^{n+1}-\Delta tU^{n+1}\mathcal{M}_y^T=&\,U^n+\Delta t F(U^n),
\end{align}
where function $F$ is computed entrywise on matrix arguments. 
Equation \eqref{Sylv} is a Sylvester equation and can be solved efficiently as described in Section \ref{sec:MatOriApp} (see also \citep{Simoncini}).

When the problem to be solved is highly stiff, as in the case of equation \eqref{ACode}, one may recur to the relaxation procedure in \citep{Simoncini} to achieve stability with larger time steps. According to this technique, equation \eqref{genODE} is relaxed as the equivalent problem,
\begin{equation}\label{relgen}
\frac{\mathrm{d}\mathbf{u}(t)}{\mathrm{d} t}=\mathcal{M}\mathbf{u}(t)-w \mathbf{u}(t)+F(\mathbf{u}(t))+w \mathbf{u}(t),\qquad \mathbf{u}(0)=\mathbf{u}_0.\\
\end{equation}
where $w$ is a free parameter.

The vector based form of IMEX Euler method applied to equation \eqref{relgen} amounts to
\begin{equation}\label{relIMEX}
(I_{m_xm_y}-\Delta t(\mathcal{M}-wI_{m_xm_y}))\mathbf{u}^{n+1}=\mathbf{u}^n+\Delta t (w\mathbf{u}^n+F(\mathbf{u}^n)).
\end{equation}

The free parameter $w$ is to be chosen in order to have a stable method at each time step. Based on the linear stability theory in \citep{Rosales, Seibold, Pagano} an IMEX method is stable for problem 
$$\frac{\mathrm{d}\mathbf{u}(t)}{\mathrm{d} t}=\mathcal{A}_w\mathbf{u}(t)+\mathcal{B}_w\mathbf{u}(t),$$
if
\begin{equation}\label{fov}
W_p(\mathcal{A}_w,\mathcal{B}_w)\subseteq \mathcal{D},
\end{equation}
where $\mathcal{A}_w\mathbf{u}(t)$ and $\mathcal{B}_w\mathbf{u}(t)$ are to be approximated implicitly and explicitly, respectively, and
\begin{itemize}
\item $W_p(\mathcal{A},\mathcal{B})=W\left((-\mathcal{A})^{\frac{p}{2}-1}\mathcal{B}(-\mathcal{A})^{-\frac{p}{2}}\right),$ 
where $W(\mathcal{X})$ denotes the field of values of matrix $\mathcal X$, that is
$$W(\mathcal{X})=\left\{\langle\mathbf{v},\mathcal{X}\mathbf{v}\rangle:\|\mathbf{v}\|=1,\mathbf{v}\,\, \text{complex vector}\right\}.$$
\item $\mathcal{D}$ denotes the unconditional stability diagram of the IMEX method. For IMEX Euler,
\begin{equation}\label{Dord1}
\mathcal{D}=\{\mu\in\mathbb{C}: |\mu|<1\},
\end{equation}
that is the unit circle centered at the origin.
\end{itemize}
Considering the linearization of the general problem \eqref{relgen} around time $t_n$, we set $w$ by requiring that \eqref{fov} holds true at each time step with 
\begin{equation*}
\mathcal{A}_w=\mathcal{M}-wI_{m_xm_y},\qquad \mathcal{B}_w=\mathcal{J}_{F}+wI_{m_xm_y},
\end{equation*}
where $\mathcal{J}_{F}$ denotes the Jacobian matrix of $F$ at time $t_n$.

Following similar arguments as before, method \eqref{relIMEX} is equivalent to the solution of the Sylvester equation
\begin{align*}
(I_{m_x}-\Delta t (\mathcal{M}_x-w I_{m_x}))U^{n+1}-&\Delta tU^{n+1}\mathcal{M}_y^T=U^n+\Delta t (F(U^n)+w U^n).
\end{align*}

For the solution of system \eqref{ACode}--\eqref{CHode}, we apply the relaxed IMEX Euler method \eqref{relIMEX} to the highly stiff equation \eqref{ACode}, and the standard IMEX Euler method \eqref{genIMEX} to equation \eqref{CHode}. 
Denoting by $\boldsymbol{\phi}^n$ and $\mathbf{c}^n$ the vectors of the approximations of $\boldsymbol{\phi}(t_n)$ and $\mathbf{c}(t_n)$, respectively, this amounts to the solution of
\begin{align*}
[I_{m_xm_y}-\Delta t(D_\phi\mathcal{M}-wI_{m_xm_y})]\boldsymbol{\phi}^{n+1}=&\,\boldsymbol{\phi}^n+\Delta t[D_\phi \boldsymbol{\psi}_\phi(t_{n+1})+w \boldsymbol{\phi}^n+F_1(\boldsymbol{\phi}^n,\mathbf{c}^n)],\\
(I_{m_xm_y}-\Delta tD_c\mathcal{M})\mathbf{c}^{n+1}=&\,\mathbf{c}^n+\Delta tD_c [\mathcal{M}F_2(\boldsymbol{\phi}^{n+1})+ \boldsymbol{\psi}_c(t_{n+1})+\boldsymbol{\psi}_{F_2}(t_{n+1})].
\end{align*}
The free parameter $w$ is chosen such that \eqref{fov} is satisfied at each time step with 
\begin{equation}\label{AwBw2}
\mathcal{A}_w=D_\phi\mathcal{M}-wI_{m_xm_y},\qquad \mathcal{B}_w=\mathcal{J}_{F_1}+wI_{m_xm_y}.
\end{equation}
\begin{remark}\label{rem:stab}
We observe that, for $p=0$, \eqref{fov} amounts to $W\left((-\mathcal{A}_w)^{-1}\mathcal{B}_w\right)\subseteq \mathcal{D}.$
One necessary condition for unconditional linear stability is that 
\begin{equation}\label{eq:CNstab}
\sigma \left((-\mathcal{A}_w)^{-1}\mathcal{B}_w\right)\subseteq{\mathcal{D}}.
\end{equation} 
In broad terms, if the space grid is not too fine, (i.e., such that $D_\phi$ is small compared to $ h^2\rho(\mathcal{J}_{F_1})$, where $h=\min\{\Delta x, \Delta y\}$, $\mathcal{J}_{F_1} = \partial_{\boldsymbol{\phi}}F_1(\boldsymbol{\phi},\mathbf{c})$,  and $\rho$ denotes the spectral radius), then $$(-\mathcal{A}_w)^{-1}\mathcal{B}_w \sim \frac{1}w \mathcal{J}_{F_1} + I_{m_xm_y}.$$
Since the largest eigenvalue of $\mathcal{J}_{F_1}$ is negative and very large in modulus ( $\sim 10^8$, with order of magnitude reflecting the one of the free energy curvature $A$ in Table \ref{tab:parameters}), we can expect that \eqref{eq:CNstab} is satisfied choosing $w\sim \rho(\mathcal{J}_{F_1})$. For the numerical experiments in this paper, condition \eqref{eq:CNstab} turns out to be also sufficient for stability.

{\color{black}Note that $\mathcal{J}_{F_1}$ depends explicitly on $\phi$ and $c$. Therefore, to maintain stability over long time intervals, the spectral radius $\rho(\mathcal{J}_{F_1})$ must be regularly updated. However, since $F_1$ is computed entry-wise, the Jacobian matrix $\mathcal{J}_{F_1}$ is diagonal, allowing its eigenvalues to be computed inexpensively. An alternative approach, used in Section \ref{sec:tests}, is to fix a unique value of $w$ in advance. This can be done by estimating the maximum values of $|\partial_{{\phi}}F_1({\phi},{c})|$ a priori, considering that ${\phi}$ and ${c}$ lie both within the interval $[0,1]$ and that their values are typically close.} It is worth to note that overestimating the value of $w$ significantly beyond what is necessary for stability, can lead to a loss of accuracy.
\end{remark}

Let $\Phi^n, C^n, \Psi_z \in\mathbb{R}^{m_x\times m_y}$ where $z\in \{c,\phi,F_2\}$ be the matrices such that $\boldsymbol{\phi}^n=\mathrm{vec}(\Phi^n),$ $\mathbf{c}^n=\mathrm{vec}(C^n),$ $\boldsymbol{\psi}_z^n=\mathrm{vec}(\Psi_z^n).$
The actual implementation of such method is performed by solving the Sylvester equations
\begin{align}\label{finAC}
&(I_{m_x}-\Delta t (D_\phi\mathcal{M}_x-wI_{m_x}))\Phi^{n+1}-\Delta tD_\phi\Phi^{n+1}\mathcal{M}_y^T=\Phi^n+\Delta t (D_\phi \Psi_\phi(t_{n+1})+w\Phi^n+F_1(\Phi^n,C^n)),\\\label{finCH}
&(I_{m_x}-\Delta tD_c \mathcal{M}_x)C^{n+1}-\Delta tD_cC^{n+1}\mathcal{M}_y^T=C^n+\Delta t D_c [\mathcal{M}_x F_2(\Phi^{n+1})+F_2(\Phi^{n+1})\mathcal{M}_y^T+ \Psi_c(t_{n+1})+\Psi_{F_2}(t_{n+1})],
\end{align}
according to the efficient strategy in Section \ref{sec:MatOriApp}. Observe that equations \eqref{finAC} and \eqref{finCH} are decoupled at each time step and the value of $\Phi^{n+1}$ can be substituted in equation \eqref{finCH} after solving equation \eqref{finAC}.
\subsubsection{Second-order accuracy}\label{sec:SecondOrder}
 Higher accuracy can be obtained by the second-order two-step backward difference formula (IMEX 2SBDF). Applied to equation \eqref{genODE} it amounts to
$$3\mathbf{u}^{n+2}-4\mathbf{u}^{n+1}+\mathbf{u}^{n}=2\Delta t(\mathcal{M}\mathbf{u}^{n+2}+2F(\mathbf{u}^{n+1})-F(\mathbf{u}^{n})),$$
that is,
\begin{equation}\label{2SBDF}
(3I_{m_xm_y}-2\Delta t\mathcal{M})\mathbf{u}^{n+2}=4\mathbf{u}^{n+1}-\mathbf{u}^n+2\Delta t(2F(\mathbf{u}^{n+1})-F(\mathbf{u}^{n})).
\end{equation}
Similarly as IMEX Euler method, this scheme can be equivalently formulated as the Sylvester equation,
\begin{align*}
(3I_{m_x}-2\Delta t \mathcal{M}_x)U^{n+2}-2\Delta tU^{n+2}\mathcal{M}_y^T=&\,4U^{n+1}-U^n+2\Delta t (2F(U^{n+1})-F(U^n)),
\end{align*}
where, for any $n$, matrix $U^{n}$ is defined as in \eqref{matrU}.

The relaxed version of method \eqref{2SBDF}, applied to equation \eqref{relgen}, amounts to
\begin{equation}\label{rel2SBDF}
(3 I_{m_xm_y}-2\Delta t(\mathcal{M}-wI_{m_xm_y}))\mathbf{u}^{n+2}=4\mathbf{u}^{n+1}-\mathbf{u}^n+2\Delta t(2F(\mathbf{u}^{n+1})+2w\mathbf{u}^{n+1}-F(\mathbf{u}^{n})-w\mathbf{u}^{n}).
\end{equation}
Parameter $w$ is chosen according to the linear stability analysis in Section \ref{sec:FirstOrder}, where $\mathcal{D}$ is the stability diagram of the IMEX 2SBDF method \citep{Seibold},
\begin{equation}\label{Dord2}
\mathcal{D}=\{\mu\in\mathbb{C}: z^2-\mu(2z-1) \text{ has all roots in the unit circle}\}.
\end{equation}
Again, we rewrite problem \eqref{rel2SBDF} as the Sylvester equation
\begin{align*}
(3I_{m_x}-2\Delta t (\mathcal{M}_x-w I_{m_x}))&U^{n+2}-2\Delta tU^{n+2}\mathcal{M}_y^T=4U^{n+1}-U^n+2\Delta t (2F(U^{n+1})+2wU^{n+1}-F(U^n)-wU^n).
\end{align*}
The second-order accurate method for system \eqref{ACode}--\eqref{CHode} is then obtained by applying the relaxed IMEX 2SBDF method to equation \eqref{ACode} and the classical IMEX 2SBDF method to equation \eqref{CHode}. With the same notation used in \eqref{finAC}--\eqref{finCH}, the actual solution of this system is performed by solving the Sylvester equations
\begin{align}\nonumber
(3I_{m_x}-2\Delta t (D_\phi\mathcal{M}_x&-w I_{m_x}))\Phi^{n+2}-2\Delta tD_\phi\Phi^{n+2}\mathcal{M}_y^T  \\\label{finACord2}
=&\,4\Phi^{n+1}-\Phi^n+2\Delta t (2F_1(\Phi^{n+1},C^{n+1})+2w \Phi^{n+1}-F_1(\Phi^n,C^n)-w \Phi^n)+2\Delta tD_\phi\Psi_\phi(t_{n+2}),\\\nonumber
(3I_{m_x}-2\Delta tD_c \mathcal{M}_x)&C^{n+2}-2\Delta tD_cC^{n+2}\mathcal{M}_y^T\\\label{finCHord2}=&\,4C^{n+1}-C^n+2\Delta t D_c [\mathcal{M}_x F_2(\Phi^{n+2})+F_2(\Phi^{n+2})\mathcal{M}_y^T+ \Psi_c(t_{n+2})+\Psi_{F_2}(t_{n+2})],
\end{align}
as described below.
\subsection{Efficient solution of Sylvester problems}\label{sec:MatOriApp}
In this section we describe the approach in \citep{Simoncini} for the efficient solution of the Sylvester equations that define the matrix formulations \eqref{finAC}--\eqref{finCH} and \eqref{finACord2}--\eqref{finCHord2} of the IMEX methods in the previous sections. We observe that all these equations are of the type
\begin{equation}\label{genSylv}
(aI_{m_x}+b\mathcal{M}_x)X+bX\mathcal{M}_y^T=Y,
\end{equation}
where $a,b\in\mathbb{R}$, $Y$ is a computable matrix. 

Equation \eqref{genSylv} can be solved efficiently by considering a spectral (or Schur) decomposition of matrices $\mathcal{M}_r$ with $r\in\{x,y\}$,
\begin{equation}\label{specMz}
\mathcal{M}_r=\Gamma_r\Lambda_r \Gamma_r^{-1},
\end{equation}
where $\Lambda_r$ is the diagonal matrix with the eigenvalues of $\mathcal{M}_r$ and $\Gamma_r$ is the non-singular matrix of the corresponding eigenvectors. Then, equation \eqref{genSylv} yields the following method for the new variable $Z=\Gamma_x^{-1}X\Gamma_y^{-T}$,
$$(aI_{m_x}+b\Lambda_x)Z+bZ\Lambda_y=\Gamma_x^{-1}Y\Gamma_y^{-T}.$$
Thus, the solution of \eqref{genSylv} is 
\begin{equation}\label{solSyl}
X=\Gamma_x Z\Gamma_y^{T},\qquad \text{with}\qquad Z=\Upsilon\circ(\Gamma_x^{-1}Y\Gamma_y^{-T}),
\end{equation}
where $\circ$ denotes the Hadamard product and $\Upsilon$ is the matrix with entries $$\Upsilon_{i,j}=\left(a+b(\Lambda_x)_{i,i}+b(\Lambda_y)_{j,j}\right)^{-1}.$$

We observe that an implementation of the IMEX methods relying on the vector-based formulations \eqref{genIMEX}, \eqref{relIMEX}, \eqref{2SBDF} or \eqref{rel2SBDF}, requires the solution of large linear systems of dimension $ m_x m_y \!\times\! m_x m_y$ at each time step. Instead, the bulk of the computation of the matrix-based algorithm described above, is given by the products of small matrices  
($\Gamma_x\in\mathbb{R}^{m_x\times m_x},$ $\Gamma_y\in\mathbb{R}^{m_y\times m_y},$ $Y\in\mathbb{R}^{m_x\times m_y}$) at each time step. The spectral decompositions \eqref{specMz} are in fact performed only once before starting the step by step process, without needing to recompute it at each time step.
\begin{remark}
A key computational challenge in solving \eqref{genSylv} for large, industrial-scale domains arises from the fact that, although the two coefficient matrices are sparse, the solution matrix $X$ is generally dense. This makes storing $X$ infeasible for high-dimensional systems. 

In corrosion modelling over very large domains, it is often the case that large portions of the domain are either entirely intact or fully corroded. As a result, most of the entries of the matrices $\Phi^n$ and $C^n$ are either 1 or 0. Then also most entries of $Y$ take the same value (see the right hand sides in \eqref{finAC}--\eqref{finCH} and \eqref{finACord2}--\eqref{finCHord2} and the formulas for $F_1$ and $F_2$ in \eqref{F1F2}--\eqref{hg}). In such scenarios, $Y$ has a low-rank structure.

This opens the door to the computation of low-rank approximations of $X$ that avoid the prohibitive storage of the full matrix. For this purpose, common numerical methods in the literature include projection-type approaches (often relying on Krylov subspace techniques) and matrix updating strategies (such as, the Alternating Direction Implicit (ADI) iteration). For a detailed overview of these methods, we refer the reader to the review \citep{Simoncinirev}.
\end{remark}
\subsection{Implementation Sketch}\label{sec:alg1}
In this section we briefly recap in a schematic way the workflow of the proposed strategy when problem \eqref{AC}--\eqref{CH} is defined on a rectangular domain. Algorithm \ref{alg:rec} outlines the steps for the implementation of IMEX Euler method \eqref{finAC}--\eqref{finCH}. Note that the majority of the computational efforts stems from the full-matrix products in \eqref{solSyl} required to solve the two Sylvester problems at every time step. Therefore, for a total of $N$ time steps, the overall computational cost of the algorithm can be roughly estimated as $\sim 4Nm_xm_y(m_x+m_y)$ floating-point operations.  The procedure for the second-order IMEX 2SBDF method is analogous; however, a starting procedure is required since this is a two-step scheme. 

From a memory perspective, the algorithms are efficient, requiring the storage of matrices whose size corresponds to the spatial resolution in one dimension only. As a result, the overall memory usage remains modest.

\begin{algorithm}
\caption{IMEX Euler method (rectangular domains)}\label{alg:rec}
\begin{algorithmic}[1]
\State Initialize the system parameters and functions and the spatial grid.
\State Define the initial conditions $\boldsymbol{\phi}^0$ and $\mathbf{c}^0$.
\State Build the matrices $\mathcal{M}_x$ and $\mathcal{M}_y$ in \eqref{genSylv} considering the assigned boundary conditions on $\partial\Omega$.

\State Compute their spectral factorization.
\State Choose a value for $w$ considering Remark \ref{rem:stab}.

\For{$n=1,\ldots N$}\Comment{Advance in time}
\State Compute the right hand side of the Sylvester problem \eqref{finAC}.
\State Compute $\boldsymbol{\phi}^{n+1}$ using the formulae in \eqref{solSyl}.
\State Compute the right hand side of the Sylvester problem \eqref{finCH}.
\State Compute $\mathbf{c}^{n+1}$ using the formulae in \eqref{solSyl}.

\EndFor
\end{algorithmic}
\end{algorithm}

\subsection{The 3D scenario}\label{sec:3drec}
In this section, we extend the previous arguments to three-dimensional rectangular domains, i.e., $\Omega=[0,L_x]\times[0,L_y]\times[0,L_z]$. The approach described here is based, as for the two-dimensional setting, on the spectral factorization of small matrices and modifies the strategy in \citep{Kirsten} that instead uses Schur factorization. The fully discrete problem is transformed into a sequence of Sylvester equations. We take a uniform grid
$$(x_i,y_j,z_k),\quad i=0,\ldots,m_x+1,\quad j=0,\ldots,m_y+1,\quad k=0,\ldots,m_z+1,$$
and the second-order approximation of the Laplacian
\begin{equation*}
 \Delta u_{i,j}=\frac{{u}_{i-1,j,k}-2{u}_{i,j,k}+{u}_{i+1,j,k}}{\Delta x^2}+\frac{{u}_{i,j-1,k}-2{u}_{i,j,k}+{u}_{i,j+1,k}}{\Delta y^2}+\frac{{u}_{i,j,k-1}-2{u}_{i,j,k}+{u}_{i,j,k+1}}{\Delta z^2}\approx\Delta {u}(x_i,y_j,z_k,t).
\end{equation*}
We consider the generic initial value problem (analogue to \eqref{genODE}),
\begin{equation}\label{genODE3d}
\frac{\mathrm{d}\mathbf{u}(t)}{\mathrm{d} t}=\mathcal{M}\mathbf{u}(t)+F(\mathbf{u}(t)),\qquad \mathbf{u}(0)=\mathbf{u}_0.
\end{equation}
Here $\mathbf{u}(t)\in \mathbb{R}^{m_xm_ym_z}$, and
\begin{equation*}
\mathcal{M}=I_{m_z}\otimes I_{m_y}\otimes \mathcal{M}_x+I_{m_z}\otimes \mathcal{M}_y\otimes I_{m_x} + \mathcal{M}_z\otimes I_{m_y}\otimes I_{m_x}\in\mathbb{R}^{m_xm_ym_z\,\times\, m_xm_ym_z},
\end{equation*}
with $\mathcal{M}_z$ defined as in \eqref{MD} or \eqref{MN}. Let $\mathcal{U}(t)\in \mathbb{R}^{m_x\,\times\, m_y\,\times\, m_z}$ be the tensor such that $\mathbf{u}(t)=\text{vec}(\boldsymbol{\mathcal{U}}).$

Applying either the IMEX Euler \eqref{genIMEX} or IMEX 2SBDF \eqref{2SBDF} (after a shift in the time index), yields to a linear system in the form
$$\left(aI_{m_xm_ym_z}+b \mathcal{M}\right)\mathbf{u}^n=\mathbf{g},$$
where $\mathbf{g}$ is a known vector computable in terms of $\mathbf{u}$ at the previous steps. Let $\boldsymbol{\mathcal{G}}$ be the tensor such that $\mathbf{g}=\text{vec}(\boldsymbol{\mathcal{G}}),$ and let $\boldsymbol{\mathcal{U}}_{(3)},\boldsymbol{\mathcal{G}}_{(3)}\in \mathbb{R}^{m_z\,\times\, m_x m_y}$ be the mode-3 unfolding of tensors $\boldsymbol{\mathcal{U}}$ and $\boldsymbol{\mathcal{G}}$, respectively. We set $X=\boldsymbol{\mathcal{U}}_{(3)}^T$ and $G=\boldsymbol{\mathcal{G}}_{(3)}^T$. Then, \eqref{genODE3d} is equivalent to
\begin{equation}\label{eq:3dstep1}
(aI_{m_x}\otimes I_{m_y}+b I_{m_y}\otimes \mathcal{M}_x+b \mathcal{M}_y \otimes I_{m_x})X+bX\mathcal{M}_z^T=G.
\end{equation}
Consider the spectral factorization $$\mathcal{M}_z=\Gamma_z\Lambda_z\Gamma_z^{-1},$$
and let $Y=X\Gamma_z^{-T}$. Then \eqref{eq:3dstep1} amounts to
$$
(aI_{m_x}\otimes I_{m_y}+b I_{m_y}\otimes \mathcal{M}_x + \mathcal{M}_y\otimes I_{m_x})Y+bY\Lambda_z=G\Gamma_z^{-T}.
$$
Let $\mathbf{z}_j$ and $\mathbf{h}_j$ be the $j-$th column of $Y$ and $G\Gamma_z^{-T},$ respectively. They satisfy the linear systems
\begin{equation}\label{eq:3dstep2}
(aI_{m_x}\otimes I_{m_y}+b I_{m_y}\otimes \mathcal{M}_x + \mathcal{M}_y\otimes I_{m_x})\mathbf{z}_j+b\lambda_j(I_{m_x}\otimes I_{m_y}) \mathbf{z}_j=\mathbf{h}_j,\qquad j=1,\ldots,m_z.
\end{equation}
In general, these systems are still too large, but each of them can be written in Sylvester form. In fact, let be $Z_j$ and $H_j$ the matrices of dimensions $m_x \times m_y$ such that $\mathbf{z}_j=\text{vec}(Z_j)$ and $\mathbf{h}_j=\text{vec}(H_j)$, respectively. Then, the systems \eqref{eq:3dstep2} are equivalent to
\begin{equation*}
\left((1-b\lambda_j)I_{m_x}+b\mathcal{M}_x\right)Z_j+bZ_j\mathcal{M}_y^T=H_j, \qquad j=1,\ldots,m_z,
\end{equation*}
that are in the form \eqref{genSylv}, and can be solved as in Section \ref{sec:MatOriApp}.

\section{IMEX methods on domains with holes}\label{sec:holes}
So far we have considered problem \eqref{AC}--\eqref{hg} for functions $\phi$ and $c$ defined on a simply connected rectangular domain. This assumption is crucial for having a discrete Laplace operator $\mathcal{M}$ with the structure of a Kronecker sum \eqref{kronsum}, which underpins the efficient implementation in Sections \ref{sec:MatOriApp}--\ref{sec:3drec}. However, this assumption is quite restraining for problems arising in real applications. In fact, pitting corrosion is a local phenomenon that causes the formation of holes on the metallic surface where the protective film has broken. Therefore, here we consider a rectangular domain with some internal holes where a corrosive electrolyte reacts with the metal. For simplicity, we limit the discussion to the two-dimensional case. However, the extension of the proposed technique to three-dimensional domains can be achieved straightforwardly by adapting it according to the arguments presented in Section \ref{sec:3drec}. Thus, the domain under consideration is 
$$\widehat \Omega=[0,L_x]\times[0,L_y]\setminus \Theta,$$
where $\Theta$ is the union set of all holes in the rectangle $[0,L_x]\times[0,L_y]$. 
Generic initial conditions
$$\phi(x,y,0)=\phi_0(x,y), \qquad c(x,y,0)=c_0(x,y),$$
are assigned in $\widehat{\Omega}$.
Denoting with $\partial S$ the boundary of a bounded set $S$, the setting above is modeled by equipping problem \eqref{AC}--\eqref{hg} with the following boundary conditions
\begin{align}\label{BCholesDir}
\phi(x,y,t)=&\,0,\qquad c(x,y,t)=0,\qquad \text{if}\,\,(x,y)\in\partial \Theta,
\end{align}
and
\begin{equation}\label{BCholesDir2}
\phi(x,y,t) = 1,\qquad c(x,y,t) = 1, \qquad \text{if}\,\,(x,y)\in\partial \widehat\Omega\setminus \partial \Theta ,\\
\end{equation}
or  
\begin{equation}\label{BCholesNeu}
\frac{\partial}{\partial n} \phi(x,y,t) = 0,\qquad \frac{\partial}{\partial n} c(x,y,t) = 0, \qquad \text{if}\,\,(x,y)\in\partial \widehat\Omega\setminus \partial \Theta ,\\
\end{equation}
where $\partial z/\partial n$ denotes the normal derivative of function $z$. The Dirichlet boundary conditions \eqref{BCholesDir2} can be used when we expect that the corrosion interface will be far from $\partial \widehat\Omega\setminus \partial \Theta$ for all $t\in[0,T].$ Otherwise, the Neumann boundary conditions \eqref{BCholesNeu} are to be preferred.

A standard space discretization of the Laplacian operator on the domain $\widehat{\Omega}$ leads to systems of ODEs in the form \eqref{genODE}, but where matrix $\mathcal{M}$ is not a Kronecker sum. Thus, the efficient implementation in Section \ref{sec:MatOriApp}, based on the equivalent formulation of the problem as a Sylvester system, is not applicable.

For this reason, before discretizing the problem, we extend it to the simply connected rectangular domain $\Omega=[0,L_x]\times [0,L_y]$ as follows:
\begin{align}\label{extAC}
\phi_t=&\,\chi_{\widehat{\Omega}}(x,y)[D_\phi \Delta\phi+F_1(\phi,c)],\\\label{extCH}
c_t=&\, \chi_{\widehat{\Omega}}(x,y)D_c\Delta(c+F_2(\phi)),
\end{align}
where $\chi_{\widehat{\Omega}}$ is the indicator function of set $\widehat{\Omega}$, defined as
\begin{equation}\label{chi}
\chi_{\widehat{\Omega}}(x,y)=\left\{\begin{array}{c} 1,\qquad \text{if}\,\, (x,y)\in S,\\
0,\qquad \text{if}\,\, (x,y)\notin S.
\end{array}\right.
\end{equation}
The extended problem \eqref{extAC}--\eqref{extCH} is equipped with initial conditions
\begin{align}\label{initmet}
\phi(x,y,0)=&\,\phi_0(x,y), \quad c(x,y,0)=c_0(x,y),\qquad (x,y)\in \widehat{\Omega},\\\label{inithol}
\phi(x,y,0)=&\,0, \quad c(x,y,0)=0,\qquad (x,y)\in \Theta,
\end{align}
and boundary conditions
\begin{equation}\label{extbcD}
\phi(x,y,t) = 1,\qquad c(x,y,t) = 1, \qquad \text{if}\,\,(x,y)\in\partial \Omega,\\
\end{equation}
or
\begin{equation}\label{extbcN}
\frac{\partial}{\partial n} \psi(x,y,t) = 0, \qquad \frac{\partial}{\partial n} c(x,y,t) = 0, \qquad (x,y)\in \partial \Omega,\quad t\in[0,T],
\end{equation}
equivalent to \eqref{BCholesDir2} or \eqref{BCholesNeu}, respectively. 

The extension of the original problem to the rectangular domain $\Omega$ as done above, is coherent with the physics of the model under consideration. In fact, if $(x,y)\in \widehat{\Omega}$, problem \eqref{extAC}--\eqref{chi} coincides with the original problem \eqref{AC}--\eqref{CH}. Otherwise, equations \eqref{extAC}--\eqref{extCH}, with \eqref{chi} and the artificial initial conditions \eqref{inithol}, yield
\begin{equation}\label{vanish}
\phi(x,y,t)\equiv 0,\qquad c(x,y,t)\equiv 0, \qquad (x,y)\in\Theta,\quad t\in[0,T].
\end{equation} 
Requiring \eqref{vanish} is totally physical, meaning that the holes have been completely corroded and no material can be generated there at any time.
In particular, the Dirichlet boundary conditions \eqref{BCholesDir} are naturally satisfied by the solutions of \eqref{extAC}--\eqref{inithol}.
\subsection{Space discretization} Throughout this section we use the same notation previously introduced. Discretization in space is still performed by second order centered differences.  We assume that the space grid is fine enough to have at least one node in $\Theta$ or in each of its connected components if disconnected. The space discretization must be such that 
$$\frac{\mathrm{d}}{\mathrm{d} t}\phi(x_i,y_j,t)=0, \qquad \frac{\mathrm{d}}{\mathrm{d} t}c(x_i,y_j,t)=0, \qquad (x_i,y_j)\in\Theta,\quad t\in[0,T],$$
so that \eqref{vanish} is satisfied at the grid points in $\Theta$. We look then at semidiscretizations in the form
\begin{align}\label{L1}
\frac{\mathrm{d}}{\mathrm{d}t}\phi(x_i,y_j,t)=\widehat{\mathcal{L}}_1(x_i,y_j,t),\qquad \frac{\mathrm{d}}{\mathrm{d}t}c(x_i,y_j,t)=\widehat{\mathcal{L}}_2(x_i,y_j,t),
\end{align}
with initial conditions
\begin{align}\label{initsysom}
\phi_{i,j}(0)=&\,\phi_0(x_i,y_j),\quad c_{i,j}(0)=c_0(x_i,y_j),\quad (x_i,y_j)\in\widehat{\Omega},\\\label{initsysthet}
\phi_{i,j}(0)=&\,0,\quad c_{i,j}(0)=0,\quad (x_i,y_j)\in{\Theta}.
\end{align}
The discrete operators $\widehat{\mathcal{L}}_1$ and $\widehat{\mathcal{L}}_2$ must be such that
\begin{align}\label{L11}
\widehat{\mathcal{L}}_1(x_i,y_j,t)\approx &\,D_\phi {\Delta}\phi(x_i,y_j,t)+F_1(\phi(x_i,y_j,t),c(x_i,y_j,t)),\quad \text{if}\,\,(x_i,y_j)\in\widehat{\Omega},\\\label{L21}
\widehat{\mathcal{L}}_2(x_i,y_j,t)\approx&\, D_c \Delta[c(x_i,y_j,t)+F_2(\phi(x_i,y_j,t))],\quad \text{if}\,\,(x_i,y_j)\in\widehat{\Omega},\\\label{L1L2}
\widehat{\mathcal{L}}_1(x_i,y_j,t)=&\,\widehat{\mathcal{L}}_2(x_i,y_j,t)=0,\quad \text{if}\,\,(x_i,y_j)\in{\Theta}.
\end{align}
\begin{remark}\label{remcons}
Note that while conditions \eqref{L11}--\eqref{L21} are necessary for consistency of the operators, conditions \eqref{L1L2} imply that the right hand side of equations \eqref{L1} vanishes on points of $\Theta$. Together with the initial conditions \eqref{initsysthet}, this yields
\begin{equation}\label{phic0}
\phi_{i,j}(t)=0,\qquad c_{i,j}(t)=0,\qquad (x_i,y_j)\in{\Theta},\qquad t\in[0,T].
\end{equation}
Thus, property \eqref{vanish} of the continuous problem \eqref{extAC}--\eqref{inithol} is preserved by a such space discretization.
\end{remark}
Since $F_1(0,0)=F_2(0)=0$, operators in the form
\begin{align*}
\widehat{\mathcal{L}}_1(x_i,y_j,t)=D_\phi \widehat{\Delta}\phi_{i,j}(t)+\chi_{\widehat{\Omega}}(x_i,y_j)F_1(\phi_{i,j}(t),c_{i,j}(t)),\qquad
\widehat{\mathcal{L}}_2(x_i,y_j,t)=D_c \widehat{\Delta}[c_{i,j}(t)+F_2(\phi_{i,j}(t))],
\end{align*}
satisfy conditions \eqref{L11}--\eqref{L1L2}, provided that the operator $\widehat{\Delta}$ is such that
\begin{align}\label{Dhat1}
\widehat{\Delta} u_{i,j}(t)\approx &\, \Delta u(x_i,y_j,t),\quad (x_i,y_j)\in\widehat{\Omega},\quad t\geq 0,\\\label{Dhat2}
 \widehat{\Delta} u_{i,j}(t) =&\, 0,\quad (x_i,y_j)\in\Theta,\quad t\geq 0,
\end{align}
with $ u\in\{\phi,c,F_2(\phi)\}.$ We define such operator as follows,
\begin{equation}\label{Delhat}
\widehat{\Delta}{u}_{i,j}:=\left\{\begin{array}{lr} 
\Delta u_{i,j}-\Delta (\chi_{\Theta}(x_i,y_j)u_{i,j}),& \text{if}\,\,(x_i,y_j)\in\widehat{\Omega},\\
\Delta u_{i,j}-\Delta (\chi_{\widehat{\Omega}}(x_i,y_j)u_{i,j})-\Delta (\chi_{\Theta}(x_i,y_j)u_{i,j}),& \text{if}\,\,(x_i,y_j)\in\Theta,\\
\end{array}\right.
\end{equation}
where $\Delta$ is the finite difference operator in \eqref{centdiff}.
Observe that:
\begin{itemize}
\item Assuming $u_{i,j}=0$ in $\Theta$, the term $\Delta (\chi_{\Theta}(x_i,y_j)u_{i,j})=0$, and could be disregarded.
\item The expression $\Delta u_{i,j}-\Delta (\chi_{\widehat{\Omega}}(x_i,y_j)u_{i,j})-\Delta (\chi_{\Theta}(x_i,y_j)u_{i,j})$ amounts to zero. 
\end{itemize}
\begin{figure}[t!]
\begin{center}
\begin{tikzpicture}[scale=0.8]
\draw [semithick,domain=90:180,fill=gray!50] plot ({3.4*(cos(\x))+5}, {3.4*sin(\x)}) |- (5,0);
\draw [ultra thick,domain=90:180] plot ({3.4*(cos(\x))+5}, {3.4*sin(\x)});
\draw[semithick] (0,0) grid (5,4);
\fill (2,2) circle(.1) node[label=below right:{\!\!\!\!\scriptsize{$(x_i,y_j)$}},draw]{};
\foreach \p in {(2,1),(2,3),(1,2),(3,2)}
\fill \p circle(.1);
\begin{scope}[color=black]
    \node[anchor = center] () at (0.5,3.5){$\widehat{\Omega}$};
    \node[anchor = center] () at (4.5,0.5){$\Theta$};
    \node[anchor = center] () at (2.5,-0.5){(a)};
\end{scope}
\end{tikzpicture}
\qquad
\begin{tikzpicture}[scale=0.8]
\draw [semithick,domain=90:180,fill=gray!50] plot ({4.2*(cos(\x))+5}, {3.7*sin(\x)}) |- (5,0);
\draw [ultra thick,domain=90:180] plot ({4.2*(cos(\x))+5}, {3.7*sin(\x)});
\draw[semithick] (0,0) grid (5,4);
\fill (2,2) circle(.1) node[label=below right:{\!\!\!\!\scriptsize{$(x_i,y_j)$}},draw]{};
\foreach \p in {(2,1),(2,3),(1,2),(3,2)}
\fill \p circle(.1);
\begin{scope}[color=black]
    \node[anchor = center] () at (0.5,3.5){$\widehat{\Omega}$};
    \node[anchor = center] () at (4.5,0.5){$\Theta$};
    \node[anchor = center] () at (2.5,-0.5){(b)};
\end{scope}
\end{tikzpicture}
\caption{Stencil configurations for discrete Laplacian. The shaded region belongs to set $\Theta$, the white region to $\widehat{\Omega}$. (a): Stencil with center in $\widehat{\Omega}$ and points in $\Theta$. (b): Stencil with center  in $\Theta$ and points in $\widehat{\Omega}$.}\label{stenc}
\end{center}
\end{figure}
So, if $u_{i,j}=0$ in $\Theta$, the operator $\widehat{\Delta}$ could be equivalently rewritten as
$$
\widehat{\Delta}{u}_{i,j}=\left\{\begin{array}{lr} 
\Delta u_{i,j}\approx \Delta u(x_i,y_j,t),& \text{if}\,\,(x_i,y_j)\in\widehat{\Omega},\\
0,& \text{if}\,\,(x_i,y_j)\in\Theta,\\
\end{array}\right.
$$
and \eqref{Dhat1} and \eqref{Dhat2} both follow immediately. However, the expressions in \eqref{Delhat} are more convenient since:
\begin{itemize}
\item[i)] In practice, for example due to round-offs (or as the result of an inexact time integration), usually $u_{i,j}$ is small but not exactly zero for $(x_i,y_j)\in\Theta$. Thus, when $(x_i,y_j)\in\widehat{\Omega}$ has neighbors in $\Theta$ (see Figure \ref{stenc}(a)), these spurious quantities contribute to $\Delta u_{i,j}$. Adding the (small but non-zero) compensation term $-\Delta (\chi_{\Theta}(x_i,y_j)u_{i,j})$ allows to annihilate them in $\widehat{\Delta}u_{i,j}$.
\item[ii)] They provide a splitting between terms that in the vector formulation of the method contribute to build a Kronecker sum (those contributing to $\Delta u_{i,j}$) and the others (the compensation terms).
\item[iii)] If $(x_i,y_j)\in\Theta$ has neighbors in $\widehat{\Omega}$ (see Figure \ref{stenc}(b)), definition \eqref{Delhat} identifies two separate terms of compensation having different nature:  $-\Delta (\chi_{\widehat{\Omega}}(x_i,y_j)u_{i,j})$ and  $-\Delta (\chi_\Theta(x_i,y_j)u_{i,j})$. While, as before, the latter compensates small perturbations, the former may be large.
In fact, the values of $u$ at the neighbors in $\widehat{\Omega}$ give non zero and possibly large contributions to $\Delta u_{i,j}$.  If these are not annihilated, condition \eqref{Dhat2} does not hold true (even approximately), resulting in a severe violation of \eqref{phic0}.
\end{itemize}
A remark similar to i) holds also for the term $\chi_{\widehat{\Omega}}(x_i,y_j)F_1(\phi_{i,j}(t),c_{i,j}(t))$ in $\widehat{\mathcal{L}}_1$ where the multiplication by the indicator function is superfluous if $\phi_{i,j}(t)=c_{i,j}(t)=0$ exactly on $\Omega$.

The action of $\widehat{\Delta}$ in \eqref{Delhat} on a vector $\mathbf{u}$ with entries $u_{i,j}$ is given by
\begin{equation}\label{delhat}
\widehat{\Delta}\mathbf{u}=(\mathcal{M}-\mathcal{N}_1-\mathcal{N}_2)\mathbf{u},
\end{equation}
where matrix $\mathcal{M}$ is the Kronecker sum in \eqref{kronsum} forcing the boundary conditions \eqref{extbcD} or \eqref{extbcN}, and matrices $\mathcal{N}_1$ and $\mathcal{N}_2$ are sparse matrices with nonzero entries introducing the correction terms $-\Delta (\chi_{\widehat{\Omega}}(x_i,y_j)u_{i,j})$ and $-\Delta (\chi_{\Theta}(x_i,y_j)u_{i,j})$, respectively, according to \eqref{Delhat}. We stress that while the contributions given by $\mathcal{N}_1\mathbf{u}$ may be large and fundamental to have \eqref{phic0}, the correction $\mathcal{N}_2\mathbf{u}$ is small or even zero if the variables exactly vanish on $\Theta$.

Thus, the system of ODEs \eqref{L1} for vectors $\boldsymbol{\phi}$ and $\mathbf{c}$ amounts to
\begin{align}\label{ACodecirc}
\frac{\mathrm{d}\boldsymbol{\phi}}{\mathrm{d}t}=&\,D_\phi (\mathcal{M}-\mathcal{N}_1-\mathcal{N}_2)\boldsymbol{\phi}+F_1(\mathbf{x},\boldsymbol{\phi},\mathbf{c}),\\\label{CHodecirc}
\frac{\mathrm{d}\mathbf{c}}{\mathrm{d}t}=&\, D_c(\mathcal{M}-\mathcal{N}_1-\mathcal{N}_2)(\mathbf{c}+F_2(\boldsymbol{\phi})),
\end{align}
with initial conditions \eqref{initsysom}--\eqref{initsysthet}. In \eqref{ACodecirc}, $\mathbf{x}$ is a vector with the node coordinates lexicographically ordered. $F_1(\mathbf{x},\boldsymbol{\phi},\mathbf{c})$ and $F_2(\boldsymbol{\phi})$ are vectors with entries $\chi_{\widehat{\Omega}}(x_i,y_j)F_1(\phi_{i,j},c_{i,j})$ and $F_2(\phi_{i,j})$, respectively, sorted analogously.

\begin{remark}
As a consequence of Remark \ref{remcons}, the solutions of  \eqref{ACodecirc}--\eqref{CHodecirc} with \eqref{initsysom}--\eqref{initsysthet} satisfy \eqref{phic0}. In particular, the homogeneous Dirichlet boundary conditions \eqref{BCholesDir} of the original problem are naturally satisfied by the solutions of the initial value problem. 
\end{remark}
\subsection{Time integration}\label{sec:TIholes}
Let $\phi_{i,j}^n$ and $c_{i,j}^n$ be the approximation to $\phi(x_i,y_j,t_n)$ and $c(x_i,y_j,t_n)$, respectively.
In order to preserve property \eqref{vanish} we require that
\begin{equation}\label{tdzero}
\phi_{i,j}^n=c_{i,j}^n=0,\qquad (x_i,y_j)\in\Theta,\quad n\geq 0.
\end{equation}
This is achieved if the time discretization of operators $\widehat{\mathcal{L}}_1$ and $\widehat{\mathcal{L}}_2$ preserve their properties \eqref{L11}--\eqref{L1L2}, i.e., denoting them as $\widetilde{\mathcal{L}}_1$ and $\widetilde{\mathcal{L}}_2$, respectively,
\begin{align*}
\widetilde{\mathcal{L}}_1(x_i,y_j,t_n)\approx &\,D_\phi {\Delta}\phi(x_i,y_j,t_n)+F_1(\phi(x_i,y_j,t_n),c(x_i,y_j,t_n)),\quad \text{if}\,\,(x_i,y_j)\in\widehat{\Omega},\\
\widetilde{\mathcal{L}}_2(x_i,y_j,t_n)\approx&\, D_c \Delta[c(x_i,y_j,t_n)+F_2(\phi(x_i,y_j,t_n))],\quad \text{if}\,\,(x_i,y_j)\in\widehat{\Omega},\\
\widetilde{\mathcal{L}}_1(x_i,y_j,t_n)=&\,\widetilde{\mathcal{L}}_2(x_i,y_j,t_n)=0,\quad \text{if}\,\,(x_i,y_j)\in{\Theta},
\end{align*}
for all $n\geq 0$. Hence, we define
\begin{align*}
\widetilde{\mathcal{L}}_1(x_i,y_j,t_n)=D_\phi \widetilde{\Delta}\phi_{i,j}^n+\chi_{\widehat{\Omega}}(x_i,y_j)F_1(\phi_{i,j}^n,c_{i,j}^n),\qquad \widetilde{\mathcal{L}}_2(x_i,y_j,t_n)=D_c \widetilde{\Delta}[c_{i,j}^n+F_2(\phi_{i,j}^n)],
\end{align*}
where the operator $\widetilde{\Delta}$ applied to a totally discrete variable $u_{i,j}^n$ is such that
$$\widetilde{\Delta}u_{i,j}^n\approx \widehat{\Delta}u_{i,j}(t_n).$$
Depending on the definition of $\widetilde{\Delta}$ we obtain and compare different IMEX methods. 
\subsubsection{First-order accuracy}\label{sec:fo}
As for the case of a rectangular domain, we relax the stiff equation \eqref{ACodecirc} before the time discretization. A straightforward application of the IMEX Euler method in Section \ref{sec:FirstOrder} to problem \eqref{ACodecirc}--\eqref{CHodecirc} leads to the scheme:
\begin{align}\label{EulerHolesAC}
(I_{m_xm_y}-\Delta t(D_\phi\mathcal{M}-wI_{m_xm_y}))\boldsymbol{\phi}^{n+1}&=\boldsymbol{\phi}^n+\Delta t[w\boldsymbol{\phi}^n+ F_1(\mathbf{x},\boldsymbol{\phi}^n,\mathbf{c}^n)- D_\phi(\mathcal{N}_1+\mathcal{N}_2)\boldsymbol{\phi}^n],\\\label{EulerHolesCH}
(I_{m_xm_y}-\Delta tD_c\mathcal{M})\mathbf{c}^{n+1}&=\mathbf{c}^n+\Delta tD_c (\mathcal{M}-\mathcal{N}_1-\mathcal{N}_2)F_2(\boldsymbol{\phi}^{n+1})-\Delta t D_c\,(\mathcal{N}_1+\mathcal{N}_2)\mathbf{c}^n,
\end{align}
and the corresponding operator $\widetilde{\Delta}$ is defined as
\begin{equation}\label{DeltaEuler}
\widetilde{\Delta} \mathbf{u}^n:=\,\mathcal{M}\mathbf{u}^n-(\mathcal{N}_1+\mathcal{N}_2)\mathbf{u}^{n-1}.
\end{equation}
We observe that, since the coefficient matrices in \eqref{EulerHolesAC}--\eqref{EulerHolesCH} are Kronecker sums, this scheme can be reformulated as a system of Sylvester equations that can be efficiently solved by the strategy in Section \ref{sec:MatOriApp}. However, this method is not really effective in practice. In fact, in order to exactly impose the boundary conditions \eqref{BCholesDir} and have \eqref{phic0}, it is crucial that the time discretization preserves \eqref{Dhat2}, i.e.,
\begin{equation}\label{tildel0}
\widetilde{\Delta}u_{i,j}^n=0,\qquad (x_i,y_j)\in\Theta, \quad n\geq 0.
\end{equation}
Unfortunately, operator $\widetilde{\Delta}$ in \eqref{DeltaEuler} does not satisfy this property. We show this fact referring to the point $(x_i,y_j)$ in Figure \ref{stenc}(b), for simplicity. Even assuming that $u_{i,j}^n=0$ for all $(x_i,y_j)\in\Theta$ and all $n$, we have
\begin{align*}
\widetilde{\Delta}u_{i,j}^n=\frac{{u}_{i-1,j}^n-2{u}_{i,j}^n+{u}_{i+1,j}^n}{\Delta x^2}+\frac{{u}_{i,j-1}^n-2{u}_{i,j}^n+{u}_{i,j+1}^n}{\Delta y^2}-\frac{{u}_{i-1,j}^{n-1}}{\Delta x^2}-\frac{{u}_{i,j+1}^{n-1}}{\Delta y^2}=\frac{{u}_{i-1,j}^n-{u}_{i-1,j}^{n-1}}{\Delta x^2}-\frac{{u}_{i,j+1}^n-{u}_{i,j+1}^{n-1}}{\Delta y^2},
\end{align*}
that is nonzero, in general. This quantity is indeed $\mathcal{O}(\Delta t/\Delta x^2)+\mathcal{O}(\Delta t/\Delta y^2)$. A similar result holds for any point of $\Theta$ with neighbors in $\widehat{\Omega}$. We can expect that \eqref{tdzero} is satisfied, at least approximately, when these quantities are negligible. In principle, this happens if $D\Delta t\ll  h^2=\min\{\Delta x^2,\Delta y^2\}$, where $D$ is the diffusion coefficient, that is a too demanding restriction to satisfy.

Thus, we propose here two different versions of the IMEX Euler method by providing two different definitions of the operator $\widetilde{\Delta}$. These are defined by their action on vectors as
\begin{align*}
\widetilde{\Delta}^{I} \mathbf{u}^n:=(\mathcal{M}-\mathcal{N}_1-\mathcal{N}_2)\mathbf{u}^n,\qquad \widetilde{\Delta}^{E} \mathbf{u}^n:=(\mathcal{M}-\mathcal{N}_1)\mathbf{u}^n-\mathcal{N}_2\mathbf{u}^{n-1}.
\end{align*}
Superscripts $I$ and $E$ denote that the terms multiplying matrix $\mathcal{N}_2$ are treated implicitly or explicitly, respectively.

In vector form, the schemes obtained by these two operators are, respectively:\\\\
\textbf{IMEX-I Euler}
\begin{align*}
[I_{m_xm_y}-\Delta t(D_\phi(\mathcal{M}-\mathcal{N}_1-\mathcal{N}_2)-wI_{m_xm_y})]\boldsymbol{\phi}^{n+1}=&\, \boldsymbol{\phi}^n+\Delta t [w\boldsymbol{\phi}^n+F_1(\mathbf{x},\boldsymbol{\phi}^n,\mathbf{c}^n)],\\
[I_{m_xm_y}-\Delta tD_c(\mathcal{M}-\mathcal{N}_1-\mathcal{N}_2)]\mathbf{c}^{n+1}=&\,\mathbf{c}^n+\Delta tD_c (\mathcal{M}-\mathcal{N}_1-\mathcal{N}_2)F_2(\boldsymbol{\phi}^{n+1}),
\end{align*}
\textbf{IMEX-E Euler}
\begin{align*}
[I_{m_xm_y}-\Delta t(D_\phi(\mathcal{M}-\mathcal{N}_1)-wI_{m_xm_y})]\boldsymbol{\phi}^{n+1}=&\,\boldsymbol{\phi}^n+\Delta t [w\boldsymbol{\phi}^n+F_1(\mathbf{x},\boldsymbol{\phi}^n,\mathbf{c}^n)- D_\phi\mathcal{N}_2\boldsymbol{\phi}^n],\\
[I_{m_xm_y}-\Delta tD_c(\mathcal{M}-\mathcal{N}_1)]\mathbf{c}^{n+1}=&\,\mathbf{c}^n+\Delta tD_c (\mathcal{M}-\mathcal{N}_1-\mathcal{N}_2)F_2(\boldsymbol{\phi}^{n+1})-\Delta t D_c\mathcal{N}_2\mathbf{c}^n,
\end{align*}
with $w$ such that \eqref{fov} holds true at each time step with $\mathcal{D}$ as in \eqref{Dord1}, and
\begin{equation}\label{ABII}
\mathcal{A}_w=D_\phi(\mathcal{M}-\mathcal{N}_1-\mathcal{N}_2)-wI_{m_xm_y},\quad \mathcal{B}_w=\mathcal{J}_{F_1}+wI_{m_xm_y},
\end{equation}
for IMEX-I Euler,
\begin{equation}\label{ABIE}
\mathcal{A}_w=D_\phi(\mathcal{M}-\mathcal{N}_1)-wI_{m_xm_y},\quad\mathcal{B}_w=\mathcal{J}_{F_1}+wI_{m_xm_y}-D_\phi\mathcal{N}_2,
\end{equation}
for IMEX-E Euler.

We observe that since both matrices $\mathcal{N}_1+\mathcal{N}_2$ and $\mathcal{N}_1$ do not have the structure in \eqref{kronsum}, the coefficient matrices in IMEX-I and IMEX-E are not Kronecker sums. Hence, the efficient implementation in Section \ref{sec:MatOriApp} can not be applied to these methods.

Solving their vector form requires the solution of two linear systems of dimension $m_xm_y$ at each time step. This can be computationally burdensome even if the space grid is not too fine and the domain not too large.

We then look for a technique to speed up the solution of methods in this form. Before presenting such technique, we extend the arguments to second-order accurate schemes.
\subsubsection{Second-order accuracy}\label{sec:so}
Following similar arguments as in the previous section, we  define the operators
\begin{align*}
\widetilde{\Delta}^{I} \mathbf{u}^n:=(\mathcal{M}-\mathcal{N}_1-\mathcal{N}_2)\mathbf{u}^n,\qquad \widetilde{\Delta}^{E} \mathbf{u}^n:=(\mathcal{M}-\mathcal{N}_1)\mathbf{u}^n-2\mathcal{N}_2\mathbf{u}^{n-1}+\mathcal{N}_2\mathbf{u}^{n-2},
\end{align*}
where the same notation as above is used.

By relaxing the stiff equation \eqref{ACodecirc} as before, the schemes for system \eqref{ACodecirc}--\eqref{CHodecirc} obtained in this way are, respectively:\\\\
\textbf{IMEX-I 2SBDF}
\begin{align*}
[3I_{m_xm_y}\!-2\Delta t[D_\phi(\mathcal{M}-\mathcal{N}_1-\mathcal{N}_2)-wI_{m_xm_y}]]\boldsymbol{\phi}^{n+2}=&\,4 \boldsymbol{\phi}^{n+1}\!-\boldsymbol{\phi}^{n}+2\Delta t [2F_1(\mathbf{x},\boldsymbol{\phi}^{n+1},\mathbf{c}^n)+2w\boldsymbol{\phi}^{n+1}\!-\!F_1(\mathbf{x},\boldsymbol{\phi}^{n},\mathbf{c}^n)-w\boldsymbol{\phi}^{n}],\\
[3I_{m_xm_y}-2\Delta tD_c(\mathcal{M}-\mathcal{N}_1-\mathcal{N}_2)]\mathbf{c}^{n+2}=&\,4\mathbf{c}^{n+1}-\mathbf{c}^n+2\Delta tD_c (\mathcal{M}-\mathcal{N}_1-\mathcal{N}_2)F_2(\boldsymbol{\phi}^{n+2}),
\end{align*}
\textbf{IMEX-E 2SBDF}
\begin{align*}
[3I_{m_xm_y}-2\Delta t[D_\phi(\mathcal{M}-\mathcal{N}_1)-wI_{m_xm_y}]]\boldsymbol{\phi}^{n+2}=&\,4\boldsymbol{\phi}^{n+1}-\boldsymbol{\phi}^n+2\Delta t [2F_1(\mathbf{x},\boldsymbol{\phi}^{n+1},\mathbf{c}^n)\\
&+2w\boldsymbol{\phi}^{n+1}- 2D_\phi\mathcal{N}_2\boldsymbol{\phi}^{n+1}-F_1(\mathbf{x},\boldsymbol{\phi}^{n},\mathbf{c}^n)-w\boldsymbol{\phi}^{n}+ D_\phi\mathcal{N}_2\boldsymbol{\phi}^n],\\
(3I_{m_xm_y}-2\Delta tD_c(\mathcal{M}-\mathcal{N}_1))\mathbf{c}^{n+2}=&\,4\mathbf{c}^{n+1}-\mathbf{c}^n+2\Delta tD_c (\mathcal{M}-\mathcal{N}_1-\mathcal{N}_2)F_2(\boldsymbol{\phi}^{n+2})-2\Delta tD_c\,\mathcal{N}_2 (2\mathbf{c}^{n+1}-\mathbf{c}^{n}).
\end{align*}
In the two methods above, parameter $w$ satisfies at each time step condition \eqref{fov} with $\mathcal{D}$ in \eqref{Dord2}, and matrices $\mathcal{A}_w$ and $\mathcal{B}_w$ as in \eqref{ABII}--\eqref{ABIE}, respectively.

As before, the efficient implementation in Section \ref{sec:MatOriApp} can not be applied to these methods, as they can not be reformulated in Sylvester form.

\begin{remark}
We do not consider the alternative operators
$$\widetilde{\Delta} \mathbf{u}^n:=(\mathcal{M}-\mathcal{N}_2)\mathbf{u}^n-\mathcal{N}_1\mathbf{u}^{n-1}\qquad \text{and} \qquad \widetilde{\Delta} \mathbf{u}^n:=(\mathcal{M}-\mathcal{N}_2)\mathbf{u}^n-2\mathcal{N}_1\mathbf{u}^{n-1}+\mathcal{N}_1\mathbf{u}^{n-2},$$
to define IMEX methods of order one and two, respectively. In fact, not only the resulting schemes can not be implemented efficiently by the strategy in Section \ref{sec:MatOriApp}, but also these operators do not satisfy \eqref{tildel0} and so they suffer from severe restrictions on the time step to fulfill (approximately) \eqref{tdzero} and the boundary conditions on $\partial\Theta$.

It is easy to show that instead all operators $\widetilde{\Delta}^{I}$ and $\widetilde{\Delta}^{E}$ defined above satisfy property \eqref{tildel0}.
\end{remark}

\section{Iterative matrix-oriented approach on domains with holes}\label{sec:iterative}
For the efficient time integration of system \eqref{ACodecirc}--\eqref{CHodecirc}, we embed an iterative procedure in methods IMEX-I and IMEX-E defined in the previous sections. For the first-order Euler methods we define it as follows:\\\\
\textbf{Iter. IMEX-I Euler and Iter. IMEX-E Euler}
\begin{align}\label{ACiter}
((1+w\Delta t)I_{m_xm_y}\!-\!\Delta tD_\phi\mathcal{M})\boldsymbol{\phi}^{n+1,k+1}&=\boldsymbol{\phi}^n\!+\!\Delta t[w\boldsymbol{\phi}^n+ F_1(\mathbf{x},\boldsymbol{\phi}^n,\mathbf{c}^n)-D_\phi\,(\mathcal{N}\boldsymbol{\phi}^{n+1,k}+\mathcal{G}\boldsymbol{\phi}^{n})],\\\label{CHiter}
(I_{m_xm_y}\!-\!\Delta tD_c\mathcal{M})\mathbf{c}^{n+1,k+1}&=\mathbf{c}^n\!+\!\Delta tD_c [(\mathcal{M}-\mathcal{N}_1-\mathcal{N}_2)F_2(\boldsymbol{\phi}^{n+1})\!-(\mathcal{N}\mathbf{c}^{n+1,k}+\mathcal{G}\mathbf{c}^{n})],
\end{align}
where $\mathcal{N}=\mathcal{N}_1+\mathcal{N}_2$ or $\mathcal{N}=\mathcal{N}_1$, respectively, $\mathcal{G}=\mathcal{N}_1+\mathcal{N}_2-\mathcal{N}$ and $k$ is the iteration number.

The iterative versions of the second-order accurate methods are defined as:\\\\
\textbf{Iter. IMEX-I 2SBDF and Iter. IMEX-E 2SBDF}
\begin{align}\nonumber
[(3+2w\Delta t) I_{m_xm_y}-2\Delta tD_\phi\mathcal{M}]\boldsymbol{\phi}^{n+2,k+1}=&\,-2\Delta tD_\phi(\mathcal{N}\boldsymbol{\phi}^{n+2,k}+\mathcal{G}(2\boldsymbol{\phi}^{n+1}-\boldsymbol{\phi}^{n}))\\\label{ACiter2}
&\,+4 \boldsymbol{\phi}^{n+1}\!-\boldsymbol{\phi}^{n}\!+2\Delta t [2F_1(\mathbf{x},\boldsymbol{\phi}^{n+1},\mathbf{c}^n)\!+\!2w\boldsymbol{\phi}^{n+1}\!-\!F_1(\mathbf{x},\boldsymbol{\phi}^{n},\mathbf{c}^n)-w\boldsymbol{\phi}^{n}],\\\label{CHiter2}
\!\![3I_{m_xm_y}\!-\!2\Delta tD_c\mathcal{M}]\mathbf{c}^{n+2,k+1}=4\mathbf{c}^{n+1}\!-&\,\mathbf{c}^n\!-\!2\Delta tD_c(\mathcal{N}\mathbf{c}^{n+2,k}+\mathcal{G}(2\mathbf{c}^{n+1}\!-\!\mathbf{c}^{n}))\!+\!2\Delta tD_c (\mathcal{M}\!-\!\mathcal{N}_1\!-\!\mathcal{N}_2)F_2(\boldsymbol{\phi}^{n+1}),
\end{align}
with, again, $\mathcal{N}=\mathcal{N}_1+\mathcal{N}_2$ or $\mathcal{N}=\mathcal{N}_1$, respectively, $\mathcal{G}=\mathcal{N}_1+\mathcal{N}_2-\mathcal{N}$ and $k$ is the iteration number.

The iterative procedures above solve two sequences of linear systems at each time step, in contrast to the IMEX-I and IMEX-E methods in Section \ref{sec:TIholes}, which solve only two linear systems per time step.
However, the coefficient matrices in \eqref{ACiter}--\eqref{CHiter} and in \eqref{ACiter2}--\eqref{CHiter2} are all Kronecker sums. Thus, these systems can all be written in Sylvester form and solved efficiently as in Section~\ref{sec:MatOriApp}.

Next, we will analyze under which circumstances the linear systems in \eqref{ACiter}--\eqref{CHiter} and \eqref{ACiter2}--\eqref{CHiter2} are solvable, and the resulting iterative methods converge and how fast.

The linear systems defining the iterative methods in \eqref{ACiter}--\eqref{CHiter2} have a unique solution if their coefficient matrices are non singular. All these matrices are in the form
\begin{equation}\label{matP}
\mathcal{P}= \beta I-\alpha\mathcal{M},\qquad \alpha, \beta>0,
\end{equation}
with $\mathcal{M}$ in \eqref{kronsum} and \eqref{MD} or \eqref{MN}.
The following theorem establishes that the proposed methods are well-defined for any choice of the grid steps and of the boundary conditions.
\begin{theorem}\label{th:Pinv}
Any matrix $\mathcal{P}$ in the form \eqref{matP} is invertible.
\end{theorem}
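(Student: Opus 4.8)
The plan is to prove invertibility by showing that $0$ is not an eigenvalue of $\mathcal{P}$. Since the matrix $\mathcal{M}$ appearing in \eqref{matP} is the Kronecker sum \eqref{kronsum}, I would first invoke the standard spectral property of Kronecker sums: the eigenvalues of $\mathcal{M}=I_{m_y}\otimes\mathcal{M}_x+\mathcal{M}_y\otimes I_{m_x}$ are exactly the pairwise sums $\lambda_i^{(x)}+\lambda_j^{(y)}$, where $\lambda_i^{(x)}$ and $\lambda_j^{(y)}$ range over the eigenvalues of $\mathcal{M}_x$ and $\mathcal{M}_y$, respectively. Consequently the eigenvalues of $\mathcal{P}=\beta I-\alpha\mathcal{M}$ are $\beta-\alpha\bigl(\lambda_i^{(x)}+\lambda_j^{(y)}\bigr)$, and it suffices to show that none of these vanishes.

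The central step is to localize the eigenvalues of the one-dimensional operators $\mathcal{M}_x$ and $\mathcal{M}_y$ in the closed left half-plane. Rather than computing them explicitly, I would apply Gershgorin's circle theorem directly to each $\mathcal{M}_z$. Every diagonal entry equals $-2/\Delta z^2$, while the off-diagonal absolute row sums are at most $2/\Delta z^2$: they equal $2/\Delta z^2$ on the interior rows and either $1/\Delta z^2$ or $2/\Delta z^2$ on the boundary rows, in both the Dirichlet and Neumann forms \eqref{MD}--\eqref{MN} (and in any mixed combination). Hence every Gershgorin disc is centered at $-2/\Delta z^2$ with radius at most $2/\Delta z^2$, so it is contained in $\{\zeta\in\mathbb{C}:\operatorname{Re}\zeta\le 0\}$. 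Therefore every eigenvalue $\lambda_i^{(z)}$ of $\mathcal{M}_z$ satisfies $\operatorname{Re}\lambda_i^{(z)}\le 0$. Combining this with the previous step, any eigenvalue $\mu=\lambda_i^{(x)}+\lambda_j^{(y)}$ of $\mathcal{M}$ obeys $\operatorname{Re}\mu\le 0$, and the corresponding eigenvalue of $\mathcal{P}$ satisfies $\operatorname{Re}(\beta-\alpha\mu)=\beta-\alpha\operatorname{Re}\mu\ge\beta>0$ because $\alpha,\beta>0$. In particular no eigenvalue of $\mathcal{P}$ can be zero, so $\det\mathcal{P}\neq 0$ and $\mathcal{P}$ is invertible, independently of the grid steps $\Delta x,\Delta y$ and of the imposed boundary conditions.

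The step I expect to require the most care is the eigenvalue localization, precisely because the Neumann and mixed matrices \eqref{MN} are non-symmetric: their spectra need not be real, so one cannot simply argue via negative semidefiniteness of a symmetric part, and indeed the symmetric part of $\mathcal{M}_z$ in the Neumann case is not negative semidefinite. The Gershgorin argument sidesteps this difficulty entirely, since it bounds the real parts of the eigenvalues without any symmetry or realness hypothesis, which is exactly the information needed to guarantee $\operatorname{Re}(\beta-\alpha\mu)>0$ uniformly.
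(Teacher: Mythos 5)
Your proof is correct, but it takes a different route from the paper's. The paper argues in one line: it asserts that $\mathcal{M}$ is diagonalizable with all negative eigenvalues, writes $\mathcal{M}=QDQ^{-1}$, and concludes $\det(\mathcal{P})=\det(\beta I-\alpha D)\neq 0$ since $\beta I-\alpha D$ is diagonal with positive entries. You instead avoid any appeal to diagonalizability or to the explicitly known spectrum of the one-dimensional operators: you use the standard fact that the eigenvalues of the Kronecker sum are the pairwise sums $\lambda_i^{(x)}+\lambda_j^{(y)}$, and then localize each $\lambda_i^{(z)}$ in the closed left half-plane via Gershgorin. What your approach buys is robustness in exactly the case you flag: for the Neumann matrix \eqref{MN}, which is non-symmetric and has $0$ as an eigenvalue (its rows sum to zero), the paper's statement that all eigenvalues are \emph{negative} is not literally accurate, and its diagonalizability claim needs a separate (standard, but unstated) similarity argument; your Gershgorin bound $\operatorname{Re}\lambda_i^{(z)}\le 0$ needs neither and still yields $\operatorname{Re}(\beta-\alpha\mu)\ge\beta>0$, which is all that is required. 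The paper's argument is shorter and exploits that the spectra of \eqref{MD} and \eqref{MN} are explicitly known; yours is more self-contained and extends verbatim to mixed boundary conditions.
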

\begin{proof}
Matrix $\mathcal{M}$ is diagonalizable with non-positive eigenvalues (regardless of the type of boundary conditions). Thus, there exists a matrix $Q$ such that $\mathcal{M}=QDQ^{-1}$ with $D$ the diagonal matrix of the eigenvalues. Hence,
$$\det(\mathcal{P})=\det(\beta I-\alpha QDQ^{-1})=\det(\beta I-\alpha D),$$
that is nonzero because matrix $\beta I -\alpha D$ is diagonal with positive entries.
\end{proof}
In the following two sections we establish when and how fast the proposed iterations converge, considering separately the case of boundary conditions of Dirichlet and Neumann type.
Henceforth, the spectral radius of a matrix $A$ is denoted by $\rho(A)$.
\subsection{Dirichlet Boundary Conditions}\label{sec:Dir}
In this section, we limit the discussion to the case of Dirichlet boundary conditions \eqref{extbcD}. Let us denote with $\mathcal{P}_D$ the symmetric matrix in \eqref{matP} with $\mathcal{M}$ defined in \eqref{kronsum}--\eqref{MD} . We start by providing upper bounds for the spectral radius of the iteration matrices in the iterative IMEX-I methods.

\begin{theorem}\label{theorhosig}
The spectral radius of matrix 
\begin{equation}\label{Sig}
\Sigma_D=-\alpha\mathcal{P}_D^{-1}\mathcal{N},\qquad \mathcal{N}=\mathcal{N}_1+\mathcal{N}_2,
\end{equation}
is such that 
\begin{equation}\label{th2th1}
\rho(\Sigma_D)  <  \frac{4\alpha}{\beta}\left(\frac{1}{\Delta x^2}+\frac{1}{\Delta y^2}\right).
\end{equation}
\end{theorem}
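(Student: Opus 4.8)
The plan is to dominate the spectral radius by the spectral norm and then factor it. Since $\rho(A)\le\|A\|_2$ for any matrix, I would first write
\[
\rho(\Sigma)\le\|\Sigma\|_2=\alpha\,\|\mathcal{P}^{-1}\mathcal{N}\|_2\le\alpha\,\|\mathcal{P}^{-1}\|_2\,\|\mathcal{N}\|_2,
\]
so the theorem reduces to the two separate estimates $\|\mathcal{P}^{-1}\|_2\le 1/\beta$ and $\|\mathcal{N}\|_2\le 4(\Delta x^{-2}+\Delta y^{-2})$.

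For the first factor I would use that, under the Dirichlet conditions assumed here, $\mathcal{P}=\beta I-\alpha\mathcal{M}$ is symmetric, and that (as in the proof of the previous theorem) $\mathcal{M}$ is diagonalizable with strictly negative eigenvalues. Hence $\mathcal{P}$ is symmetric positive definite with smallest eigenvalue $\lambda_{\min}(\mathcal{P})=\beta-\alpha\,\lambda_{\max}(\mathcal{M})>\beta$, because $\lambda_{\max}(\mathcal{M})<0$ and $\alpha>0$. Therefore $\|\mathcal{P}^{-1}\|_2=1/\lambda_{\min}(\mathcal{P})\le 1/\beta$.

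The substantive step, and the one I expect to be the main obstacle, is bounding $\|\mathcal{N}\|_2$. Rather than computing it directly I would use $\|\mathcal{N}\|_2\le\sqrt{\|\mathcal{N}\|_1\,\|\mathcal{N}\|_\infty}$, reducing matters to the maximal absolute row and column sums of $\mathcal{N}=\mathcal{N}_1+\mathcal{N}_2$. The key observation is that each of $\mathcal{N}_1,\mathcal{N}_2$ is a \emph{masked} copy of the five-point stencil \eqref{centdiff}: in the row of a node $(x_i,y_j)$ one retains only those stencil coefficients whose column node lies in $\widehat\Omega$ (for $\mathcal{N}_1$) or in $\Theta$ (for $\mathcal{N}_2$), as dictated by \eqref{Delhat}. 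I would then split the rows into two types. If $(x_i,y_j)\in\widehat\Omega$ only $\mathcal{N}_2$ contributes, so the row reduces to off-diagonal terms for the neighbours lying in $\Theta$, with absolute sum at most $2(\Delta x^{-2}+\Delta y^{-2})$. If $(x_i,y_j)\in\Theta$, the combined row of $\mathcal{N}_1+\mathcal{N}_2$ restores the \emph{full} stencil: indeed $\chi_{\widehat\Omega}+\chi_{\Theta}\equiv 1$ on the neighbours, so on $\Theta$-rows $\mathcal{N}$ coincides with $\mathcal{M}$ (the vanishing of $\mathcal{M}-\mathcal{N}_1-\mathcal{N}_2$ noted after \eqref{Delhat}), whose absolute row sum is exactly $4(\Delta x^{-2}+\Delta y^{-2})$. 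This gives $\|\mathcal{N}\|_\infty\le 4(\Delta x^{-2}+\Delta y^{-2})$. An analogous column-wise count, tracking in which rows a fixed unknown $u_{i,j}$ occurs as a masked neighbour, produces the same bound for $\|\mathcal{N}\|_1$, whence $\|\mathcal{N}\|_2\le 4(\Delta x^{-2}+\Delta y^{-2})$.

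Combining the three estimates yields
\[
\rho(\Sigma)\le\alpha\cdot\frac{1}{\beta}\cdot 4\left(\frac{1}{\Delta x^2}+\frac{1}{\Delta y^2}\right)=\frac{4\alpha}{\beta}\left(\frac{1}{\Delta x^2}+\frac{1}{\Delta y^2}\right),
\]
which is precisely \eqref{th2th1}. The delicate bookkeeping lies entirely in the masked-stencil row/column count for $\mathcal{N}$; once its rows are cleanly identified with partial or full five-point stencils, everything else is routine.
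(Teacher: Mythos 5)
Your proposal is correct and follows essentially the same route as the paper: bound $\rho(\Sigma)$ by $\alpha\|\mathcal{P}^{-1}\|_2\|\mathcal{N}\|_2$, use the symmetry of $\mathcal{P}$ and the negativity of the eigenvalues of $\mathcal{M}$ to get $\|\mathcal{P}^{-1}\|_2<1/\beta$, and bound $\|\mathcal{N}\|_2$ via $\sqrt{\|\mathcal{N}\|_1\|\mathcal{N}\|_\infty}$ with the masked five-point-stencil row/column count. Your bookkeeping for the rows of $\mathcal{N}_1+\mathcal{N}_2$ (partial stencil on $\widehat\Omega$-rows, full stencil on $\Theta$-rows) is in fact slightly more explicit than the paper's one-line justification, but the argument is the same.
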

\begin{proof}
The spectral radius of matrix $\Sigma_D$ can be bounded as follows,
$$\rho(\Sigma_D) = \alpha\rho(\mathcal{P}_D^{-1}\mathcal{N})\leq \alpha\|\mathcal{P}_D^{-1}\mathcal{N}\|_2\leq \alpha\|\mathcal{P}_D^{-1}\|_2\|\mathcal{N}\|_2.$$
Since $\mathcal{P}_D$ is symmetric, and using well-known norm inequalities, then
$$\rho(\Sigma_D)\leq \alpha\|\mathcal{P}_D^{-1}\|_2\|\mathcal{N}\|_2= \alpha \rho(\mathcal{P}_D^{-1}) \|\mathcal{N}\|_2\leq \alpha\rho(\mathcal{P}_D^{-1}) \sqrt{\|\mathcal{N}\|_1\|\mathcal{N}\|_{\infty}}.$$
The eigenvalues of matrix $\mathcal{M}$ are known and negative, thus if $\lambda$ is the largest eigenvalue of $\mathcal{M},$ then the smallest eigenvalue of $\mathcal{P}_D$ is 
$$\beta-\alpha \lambda >0,$$
and the spectral radius of $\mathcal{P}_D^{-1}$ is then
\begin{equation}\label{eq:rhoPinv}
\rho(\mathcal{P}_D^{-1})=\frac{1}{\beta-\alpha\lambda}<\frac{1}{\beta}.
\end{equation}
In order to bound the two norms of $\mathcal{N}$ we observe that the $i$-th row has nonzero entries only if some nodes in the stencil centered on $\mathbf{x}_i$ are in $\Theta$. Thus, the number of nonzero entries in a row and the sum of their moduli is maximized when the whole stencil is in $\Theta$, and thus,
$$\|\mathcal{N}\|_\infty\leq 4\left(\frac{1}{\Delta x^2}+\frac{1}{\Delta y^2}\right).$$
We can reason in a similar way on the columns, and so,
$$\|\mathcal{N}\|_1\leq 4\left(\frac{1}{\Delta x^2}+\frac{1}{\Delta y^2}\right).$$
We conclude then that
$$\rho(\Sigma_D)\leq \alpha\rho(\mathcal{P}_D^{-1}) \sqrt{\|\mathcal{N}\|_1\|\mathcal{N}\|_{\infty}} < \frac{4\alpha}{\beta}\left(\frac{1}{\Delta x^2}+\frac{1}{\Delta y^2}\right).$$
\end{proof}
The following corollary establishes an upper bound for the spectral radii of the iteration matrices in \eqref{ACiter}--\eqref{CHiter2} with $\mathcal{N}=\mathcal{N}_1+\mathcal{N}_2$.
\begin{corollary}\label{cor1}
The spectral radii of the iteration matrices of IMEX-I Euler in \eqref{ACiter}--\eqref{CHiter} and of IMEX-I 2SBDF in \eqref{ACiter2}--\eqref{CHiter2} fulfill the following bounds
$$\rho_\phi  <  \frac{4D_\phi\Delta t}{\gamma+w \Delta t}\left(\frac{1}{\Delta x^2}+\frac{1}{\Delta y^2}\right),\qquad \rho_c  <  \frac{4D_c\Delta t}\gamma \left(\frac{1}{\Delta x^2}+\frac{1}{\Delta y^2}\right).$$
where the parameter $\gamma$ is defined by
\begin{equation}\label{eq:gamma}
\gamma=1,\qquad \gamma=\frac{3}2,
\end{equation}
for the first- and second-order method, respectively.
\end{corollary}
\begin{proof}
The iteration matrices of equations \eqref{ACiter}--\eqref{CHiter2} are in the form \eqref{Sig} where $\mathcal{P}_D$ is in the form \eqref{matP}, with $(\alpha,\beta)=(\Delta t D_\phi, 1+w\Delta t)$, $(\alpha,\beta)=(\Delta t D_c,1)$, $(\alpha,\beta)=(2\Delta t D_\phi, 3+2w\Delta t)$, $(\alpha,\beta)=(2\Delta t D_c,3)$, respectively. 
The proof is completed as a consequence of Theorem \ref{theorhosig} by substituting these values of $\alpha$ and $\beta$ in equation \eqref{th2th1}.
\end{proof}
{\color{black}\begin{theorem}\label{theo:conI}
Let $h=\min\{\Delta x,\Delta y\}$. The iterations \eqref{ACiter}-\eqref{CHiter} defining the iterative IMEX-I Euler method and the iterations \eqref{ACiter2}-\eqref{CHiter2} defining the iterative IMEX-I 2SBDF method, converge if
\begin{equation}\label{eq:convas}
\left(h^2>\frac{8D_\phi}{w}, \quad \text{or}\quad \Delta t<\frac{\gamma h^2}{8D_\phi-wh^2}\right), \quad \text{and} \quad \Delta t<\frac{\gamma h^2}{8D_c},
\end{equation}
where the parameter $\gamma$ is defined in \eqref{eq:gamma}.
\end{theorem}
\begin{proof}
From Corollary \ref{cor1}, sufficient conditions for convergence of the iterations in the IMEX-I schemes are
$$\frac{8D_\phi\Delta t}{h^2(\gamma+w \Delta t)}<1,\qquad \frac{8D_c\Delta t}{\gamma h^2}<1.$$ 
The first condition is satisfied if
$$h^2>\frac{8D_\phi}{w} \quad \text{or}\quad \Delta t<\frac{\gamma h^2}{8D_\phi-wh^2},$$
the second if $$\Delta t<\frac{\gamma h^2}{8D_c}.$$
Thus, the statement holds.
\end{proof}
}
\begin{remark}\label{rem:notsharp}
With similar steps as in the proofs of Theorem \ref{theorhosig} and Corollary \ref{cor1}, one can obtain similar bounds on the spectral radius of the iteration matrices in \eqref{ACiter}--\eqref{CHiter2} of the iterative IMEX-E methods. In fact, by reasoning in a similar way on the norms of matrix $\mathcal{N}=\mathcal{N}_1$, it is easy to prove that
$$\rho_\phi \leq \frac{2D_\phi\Delta t}{\gamma+w \Delta t}\left(\frac{1}{\Delta x^2}+\frac{1}{\Delta y^2}\right),\qquad \rho_c \leq \frac{2D_c\Delta t}{\gamma} \left(\frac{1}{\Delta x^2}+\frac{1}{\Delta y^2}\right),$$
with $\gamma$ as in \eqref{eq:gamma}.
However, bounds sharper than these are provided below.
\end{remark}
We now aim at establishing bounds on the spectral radius of the iteration matrices defined by the iterative {IMEX-E} method in \eqref{ACiter}--\eqref{CHiter}, which are sharper than those in Remark \ref{rem:notsharp}. These bounds are derived from the fact that matrix $\mathcal{N}=\mathcal{N}_1$ is nilpotent of order two (see Lemma \ref{lemnil}).\\

\begin{Lemma}[see \citep{Shemesh}]\label{Shemesh}
If two matrices $A$ and $B$ satisfy
$$A[A,B]=[A,B]B=0,\qquad [A,B]=AB-BA,$$
then they are simultaneously triangularizable, i.e. there exists an invertible matrix $Q$ such that $Q^{-1}AQ$ and $Q^{-1}BQ$ are upper triangular.
\end{Lemma}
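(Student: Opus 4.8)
The plan is to reduce the two hypotheses to a single structural fact and then induct on the dimension $n$ (working over $\mathbb{C}$, so that eigenvalues are available; simultaneous triangularizability is a statement about the complexified operators). Write $C=[A,B]=AB-BA$. The key step, which I would carry out first, is the observation that the two hypotheses say precisely that $AB$ is a \emph{central} element of the algebra generated by $A$ and $B$. Indeed $A[A,B]=A^2B-ABA=[A,AB]$, so the hypothesis $A[A,B]=0$ is equivalent to $[A,AB]=0$; likewise $[A,B]B=AB^2-BAB=-[B,AB]$, so $[A,B]B=0$ is equivalent to $[B,AB]=0$. Hence $D:=AB$ commutes with both $A$ and $B$. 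This is what unlocks the argument, because it manufactures common invariant subspaces: for every scalar $\lambda$ the eigenspace $V_\lambda=\ker(D-\lambda I)$ is invariant under both $A$ and $B$, since $Dx=\lambda x$ gives $D(Ax)=A(Dx)=\lambda Ax$ and similarly $D(Bx)=\lambda Bx$.

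For the inductive step (the base case $n=1$ being trivial), I would first treat the case in which $D$ is \emph{not} a scalar multiple of the identity. Then $D$ has an eigenvalue $\lambda$ whose eigenspace $V_\lambda$ is a proper nonzero subspace, and by the previous paragraph it is a common invariant subspace. Both the restriction of $A,B$ to $V_\lambda$ and the induced operators on the quotient $\mathbb{C}^n/V_\lambda$ again satisfy the hypotheses, since $C$, $AC$ and $CB$ all restrict to $V_\lambda$ and descend to the quotient. As each piece has dimension $<n$, the inductive hypothesis gives a common triangularizing flag on each; concatenating the flag on $V_\lambda$ with the pullback of the flag on the quotient produces a complete flag of $\mathbb{C}^n$ invariant under $A$ and $B$, which is exactly a simultaneous triangularization.

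It remains to handle $D=\lambda I$ scalar, which I would finish directly. If $\lambda\neq 0$, then $A$ and $B$ are invertible with $B=\lambda A^{-1}$, whence $C=AB-BA=\lambda I-\lambda I=0$, so $A$ and $B$ commute and are simultaneously triangularizable by the standard common-eigenvector induction for commuting operators over $\mathbb{C}$. If $\lambda=0$, then $AB=0$, i.e. $\operatorname{Im}B\subseteq\ker A$; after setting aside the trivial sub-cases $A=0$ and $A$ invertible (the latter forcing $B=0$), the subspace $\ker A$ is proper, nonzero, invariant under $A$ trivially and under $B$ because $B(\ker A)\subseteq\operatorname{Im}B\subseteq\ker A$, so the induction applies once more.

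The main obstacle is entirely the first step: from the raw identities $A[A,B]=[A,B]B=0$ there is no visible common invariant structure, and it is not obvious that any reduction is possible; the centrality of $AB$ is what makes every later step routine. After that, the only points needing care are the (elementary) verification that the hypotheses pass to restrictions and quotients, and the separate bookkeeping for the scalar cases of $D$. As a side remark, the same centrality yields a quick proof that $C$ is nilpotent, since $C^2=ABC$ gives $C^k=D^{k-1}C$ and hence $\operatorname{tr}(C^k)=\operatorname{tr}(D^{k-1}C)=0$ using $[A,D]=0$; this is consistent with McCoy's triangularizability criterion, but the invariant-subspace induction above is more self-contained.
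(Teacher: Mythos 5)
Your argument is correct, and it is worth noting that the paper itself offers no proof of this lemma at all: it is stated with a pointer to the reference \cite{Shemesh} and then used as a black box in the proof of Theorem \ref{theosig}. So there is nothing in the paper to compare line by line; what you have produced is a self-contained, elementary replacement for the citation. Your key observation --- that $A[A,B]=[A,AB]$ and $[A,B]B=-[B,AB]$, so the hypotheses say exactly that $D=AB$ commutes with both $A$ and $B$ --- is the right reduction, and everything after it checks out: eigenspaces of $D$ are common invariant subspaces; the hypotheses are inherited by restrictions and quotients because the invariant subspace is invariant under both matrices; the scalar cases $D=\lambda I$ with $\lambda\neq 0$ (forcing $[A,B]=0$) and $D=0$ (giving $\operatorname{Im}B\subseteq\ker A$ as a common invariant subspace) are handled correctly; and a complete common invariant flag is equivalent to simultaneous triangularization over $\mathbb{C}$. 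The results in this circle (Shemesh, Laffey, McCoy) are usually reached via McCoy's criterion, i.e.\ by proving that $p(A,B)[A,B]$ is nilpotent for all polynomials $p$; your closing remark that $C^k=D^{k-1}C$ has zero trace recovers the nilpotency of $[A,B]$ as a corollary rather than using it as the engine, which makes your route more elementary and arguably better suited to being inlined in the paper than the bare citation. The only caveat, which you already flag, is that the triangularizing $Q$ is in general complex even for real $A,B$; this is harmless for the way Lemma \ref{Shemesh} is used in Theorem \ref{theosig}, where only the spectrum of $\beta\mathcal{N}+\alpha\mathcal{S}\mathcal{N}$ is read off the triangular forms.
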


\begin{theorem}\label{theosig}
Let $\Sigma_D$ be a matrix as in \eqref{Sig} with $\mathcal{N}$ nilpotent such that $\mathcal{N}^2=0$. Then 
\begin{equation}\label{thth2}
\rho(\Sigma_D)< \frac{\alpha^2\rho(\mathcal{M})\sqrt{\|\mathcal{N}\|_1\|\mathcal{N}\|_\infty}}{\beta^2+\alpha\beta\rho(\mathcal{M})}.
\end{equation}
\end{theorem}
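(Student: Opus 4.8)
The plan is to bound $\rho(\Sigma)$ by examining a single eigenvalue of largest modulus and exploiting the nilpotency $\mathcal{N}^2=0$ of Lemma~\ref{lemnil} to confine the eigenproblem to $\ker\mathcal{N}$, after which the symmetry of $\mathcal{P}$ (Dirichlet case) closes the estimate. If $\rho(\Sigma)=0$ then \eqref{thth2} is immediate, since its right-hand side is positive whenever $\mathcal{N}\neq0$; so I assume an eigenpair $\Sigma v=\mu v$ with $v\neq0$ and $|\mu|=\rho(\Sigma)>0$, where $\Sigma$ is as in \eqref{Sig} and $\mathcal{P}$ as in \eqref{matP}.

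First I would rewrite the eigenrelation $-\alpha\mathcal{P}^{-1}\mathcal{N}v=\mu v$ as $-\alpha\mathcal{N}v=\mu\mathcal{P}v$ and multiply on the left by $\mathcal{N}$; since $\mathcal{N}^2=0$ and $\mu\neq0$, this forces $\mathcal{N}\mathcal{P}v=0$. Setting $u:=\mathcal{P}v$ (so $u\neq0$, $v=\mathcal{P}^{-1}u$, and $u\in\ker\mathcal{N}$), the eigenrelation becomes $\mu u=-\alpha\mathcal{N}\mathcal{P}^{-1}u$. The decisive step is the resolvent identity $\mathcal{P}^{-1}=\tfrac1\beta I+\tfrac{\alpha}{\beta}\mathcal{M}\mathcal{P}^{-1}$, read off directly from $\mathcal{P}=\beta I-\alpha\mathcal{M}$. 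Applying $\mathcal{N}$ to it and using $\mathcal{N}u=0$ gives $\mathcal{N}\mathcal{P}^{-1}u=\tfrac{\alpha}{\beta}\mathcal{N}\mathcal{M}\mathcal{P}^{-1}u$, hence $\mu u=-\tfrac{\alpha^2}{\beta}\mathcal{N}\mathcal{M}\mathcal{P}^{-1}u$. This is precisely where the extra factor $\mathcal{M}$ enters, producing the $\rho(\mathcal{M})$ in the numerator and the sharper denominator of \eqref{thth2}; it is the gain over the crude estimate of Theorem~\ref{theorhosig}.

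Taking Euclidean norms then yields $|\mu|\,\|u\|\le\tfrac{\alpha^2}{\beta}\|\mathcal{N}\|_2\,\|\mathcal{M}\mathcal{P}^{-1}u\|$. Because $\mathcal{M}$ and $\mathcal{P}$ are symmetric and commute, the product $\mathcal{M}\mathcal{P}^{-1}$ is symmetric with eigenvalues $\lambda_i/(\beta-\alpha\lambda_i)$; since every $\lambda_i<0$, its spectral norm equals $\rho(\mathcal{M})/(\beta+\alpha\rho(\mathcal{M}))$, so $\|\mathcal{M}\mathcal{P}^{-1}u\|\le\rho(\mathcal{M})/(\beta+\alpha\rho(\mathcal{M}))\,\|u\|$. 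Combining this with the standard inequality $\|\mathcal{N}\|_2\le\sqrt{\|\mathcal{N}\|_1\|\mathcal{N}\|_\infty}$ already used in Theorem~\ref{theorhosig}, and dividing by $\|u\|$, gives exactly $\rho(\Sigma)=|\mu|\le\alpha^2\rho(\mathcal{M})\sqrt{\|\mathcal{N}\|_1\|\mathcal{N}\|_\infty}/(\beta^2+\alpha\beta\rho(\mathcal{M}))$.

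The main obstacle is twofold. First, spotting the substitution $u=\mathcal{P}v$ that moves the eigenproblem into $\ker\mathcal{N}$ and then using the resolvent identity to trade one factor $\mathcal{P}^{-1}$ for $\mathcal{M}\mathcal{P}^{-1}$: without the nilpotency this manipulation collapses back to the crude bound, so Lemma~\ref{lemnil} is doing the essential work. (Shemesh's Lemma~\ref{Shemesh} offers an alternative route via simultaneous triangularisation of $\mathcal{N}$ and $\mathcal{N}\mathcal{P}^{-1}$, both of whose commutator products with $\mathcal{N}$ vanish, but the kernel reduction above is more direct.) Second, upgrading the final $\le$ to the strict $<$ of \eqref{thth2}: here I would note that equality in $\|\mathcal{M}\mathcal{P}^{-1}u\|\le\rho(\mathcal{M}\mathcal{P}^{-1})\|u\|$ forces $u$ into the eigenspace of $\mathcal{M}$ for its most negative eigenvalue, whence $\mathcal{M}\mathcal{P}^{-1}u$ is a scalar multiple of $u\in\ker\mathcal{N}$ and $\mathcal{N}\mathcal{M}\mathcal{P}^{-1}u=0$, forcing $\mu=0$ and contradicting $|\mu|=\rho(\Sigma)>0$; hence the inequality is strict. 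Both the symmetry of $\mathcal{M}\mathcal{P}^{-1}$ and this strictness argument rely on $\mathcal{P}$ being symmetric, so the proof as written is confined to the Dirichlet conditions \eqref{extbcD}.
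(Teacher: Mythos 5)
Your proof is correct, and it reaches the paper's bound by a genuinely different route for the key step. The paper writes $\Sigma=\tfrac{-\alpha}{\beta^2}\left[\beta\mathcal{N}+\alpha\mathcal{S}\mathcal{N}\right]$ with $\mathcal{S}=\tfrac{\beta}{\alpha}(\beta\mathcal{P}^{-1}-I)$ and invokes Shemesh's criterion (Lemma \ref{Shemesh}) to simultaneously triangularize $\beta\mathcal{N}$ and $\alpha\mathcal{S}\mathcal{N}$, concluding that the nonzero spectrum of $\Sigma$ is that of $\tfrac{-\alpha^2}{\beta^2}\mathcal{S}\mathcal{N}$, which it then bounds by $\|\mathcal{S}\|_2\|\mathcal{N}\|_2$ exactly as you do. You instead take an eigenpair, use $\mathcal{N}^2=0$ to push the eigenvector relation into $\ker\mathcal{N}$ via $u=\mathcal{P}v$, and apply the resolvent identity $\mathcal{P}^{-1}=\tfrac1\beta I+\tfrac{\alpha}{\beta}\mathcal{M}\mathcal{P}^{-1}$ to kill the $\tfrac1\beta I$ term; since $\mathcal{S}=\beta\mathcal{M}\mathcal{P}^{-1}$, the operator $\mathcal{N}\mathcal{M}\mathcal{P}^{-1}$ you end up estimating has the same nonzero spectrum as the paper's $\tfrac1\beta\mathcal{S}\mathcal{N}$, so the two arguments are algebraically equivalent but your identification of that spectrum is more elementary (no external lemma on simultaneous triangularizability). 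Your approach also buys something the paper's does not: a genuine justification of the strict inequality in \eqref{thth2}, since equality in the symmetric-operator norm bound would force $u$ into the extremal eigenspace of $\mathcal{M}$ and hence $\mu=0$; the paper's chain of $\leq$'s never actually upgrades to $<$. The only caveat, which you share with the paper, is the degenerate case $\mathcal{N}=0$, where both sides vanish and strictness fails, but that case never occurs for $\mathcal{N}=\mathcal{N}_1$ with $\Theta\neq\emptyset$.
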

\begin{proof}
Matrix $\Sigma_D$ can be written as 
$$\Sigma_D=-\frac{\alpha}{\beta^2}\left[\beta\mathcal{N}+{\alpha}\mathcal{S}_D\mathcal{N}\right],\qquad \mathcal{S}_D=\frac{\beta}{\alpha}(\beta\mathcal{P}_D^{-1}-I).$$
We show that the sets of the eigenvalues of matrices $\beta\mathcal{N}+{\alpha}\mathcal{S}_D\mathcal{N}$ and ${\alpha}\mathcal{S}_D\mathcal{N}$ coincide. According to Lemma \ref{Shemesh} matrices ${\alpha}\mathcal{S}_D\mathcal{N}$ and $\beta\mathcal{N}$ are simultaneously triangularizable. In fact, taking into account that $\mathcal{N}^2=0$,
\begin{align*}
&{\alpha}\mathcal{S}_D\mathcal{N}[{\alpha}\mathcal{S}_D\mathcal{N},\beta\mathcal{N}]={\alpha}\mathcal{S}_D\mathcal{N}({\alpha}\beta\mathcal{S}_D\mathcal{N}\mathcal{N}-{\alpha}\beta\mathcal{N}\mathcal{S}_D\mathcal{N})=0,\\
&[{\alpha}\mathcal{S}_D\mathcal{N},\beta\mathcal{N}]\beta\mathcal{N}=\beta({\alpha}\beta\mathcal{S}_D\mathcal{N}\mathcal{N}-{\alpha}\beta\mathcal{N}\mathcal{S}_D\mathcal{N})\mathcal{N}=0.
\end{align*}
Therefore, there exists an invertible matrix $Q$ such that $\alpha Q^{-1}\mathcal{S}_D\mathcal{N}Q$ and $\beta Q^{-1}\mathcal{N}Q$ are both triangular with diagonal entries equal to the eigenvalues of $\mathcal{S}_D\mathcal{N}$ and of $\mathcal{N}$, respectively. The triangularization of matrix $\beta\mathcal{N}+{\alpha}\mathcal{S}_D\mathcal{N}$ is then
$$Q^{-1}\beta\mathcal{N}Q+Q^{-1}{\alpha}\mathcal{S}_D\mathcal{N}Q,$$
whose diagonal entries are sums of an eigenvalue of $\alpha\mathcal{S}_D\mathcal{N}$ and an eigenvalue of $\beta\mathcal{N}$. Since $\mathcal{N}$ is nilpotent, its eigenvalues are all zero, and thus the eigenvalues of $\beta\mathcal{N}+{\alpha}\mathcal{S}_D\mathcal{N}$ are those of ${\alpha}\mathcal{S}_D\mathcal{N}$.
We can conclude that 
\begin{equation}\label{eqrhosi}
\rho(\Sigma_D)=\frac{\alpha^2}{\beta^2}\rho(\mathcal{S}_D\mathcal{N})\leq \frac{\alpha^2}{\beta^2}\|\mathcal{S}_D\|_2\|\mathcal{N}\|_2\leq \frac{\alpha^2}{\beta^2}\rho(\mathcal{S}_D)\sqrt{\|\mathcal{N}\|_1\|\mathcal{N}\|_\infty},
\end{equation}
since matrix $\mathcal{S}_D$ is symmetric. Moreover,
$$\rho(\mathcal{S}_D)=\frac{\beta}{\alpha}\rho(\beta\mathcal{P}_D^{-1}-I),$$
and the eigenvalues of $\beta\mathcal{P}_D^{-1}$ are in the form
$$0<\frac{\beta}{\beta - \alpha \lambda}<1,$$
with $\lambda<0$ eigenvalue of $\mathcal{M}$. Then,
\begin{equation}\label{eq:rhoS}\rho(\beta\mathcal{P}_D^{-1}-I)=\max_\lambda \left|\frac{\beta}{\beta-\alpha \lambda}-1\right|=1-\frac{\beta}{\beta+\alpha \rho(\mathcal{M})}=\frac{\alpha\rho(\mathcal{M})}{\beta+\alpha \rho(\mathcal{M})}.
\end{equation}
So, 
$$
\rho(\mathcal{S}_D)=\frac{\beta\rho(\mathcal{M})}{\beta+\alpha \rho(\mathcal{M})},
$$
and the theorem is proved after substitution in \eqref{eqrhosi}.
\end{proof}
\begin{Lemma}\label{lemnil}
Matrix $\mathcal{N}=\mathcal{N}_1$ in \eqref{delhat} is nilpotent and such that $\mathcal{N}^2=0$ 
\end{Lemma}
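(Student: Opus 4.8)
The plan is to read off the identity $\mathcal{N}_1^2=0$ directly from the sparsity (block) structure of $\mathcal{N}_1$, rather than from any explicit entrywise computation. First I would partition the grid nodes into the two disjoint index sets $I_{\widehat{\Omega}}$ and $I_\Theta$ consisting of the nodes lying in $\widehat{\Omega}$ and in $\Theta$, respectively, and order the unknowns so that all indices of $I_{\widehat{\Omega}}$ precede all indices of $I_\Theta$. With this ordering I claim that $\mathcal{N}_1$ has the block form
$$\mathcal{N}_1=\begin{pmatrix} 0 & 0 \\ B & 0 \end{pmatrix},$$
where the only possibly nonzero block $B$ occupies the rows indexed by $I_\Theta$ and the columns indexed by $I_{\widehat{\Omega}}$.

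The key step is to justify this pattern from the definition \eqref{Delhat}. By construction $\mathcal{N}_1$ carries the correction $-\Delta(\chi_{\widehat{\Omega}}(x_i,y_j)u_{i,j})$, which, according to \eqref{Delhat}, is present only when the stencil is centred at a node $(x_i,y_j)\in\Theta$; hence every row of $\mathcal{N}_1$ indexed by $I_{\widehat{\Omega}}$ is identically zero. For a row indexed by a node $(x_i,y_j)\in\Theta$ (the configuration of Figure \ref{stenc}(b)), the five-point stencil of $\Delta(\chi_{\widehat{\Omega}}u)$ involves the centre and its four neighbours: the centre contributes nothing because $\chi_{\widehat{\Omega}}$ vanishes on $\Theta$, and a neighbour contributes a factor $1/\Delta x^2$ or $1/\Delta y^2$ only when it lies in $\widehat{\Omega}$, i.e. when its column index belongs to $I_{\widehat{\Omega}}$. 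Therefore all nonzero columns of $\mathcal{N}_1$ belong to $I_{\widehat{\Omega}}$, and in particular no nonzero entry can connect two $\Theta$-nodes, which establishes the block form above.

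Finally I would square the block matrix. Since the single nonzero block of $\mathcal{N}_1$ sends column indices in $I_{\widehat{\Omega}}$ to row indices in $I_\Theta$, a nonzero $(p,r)$ entry forces $r\in I_{\widehat{\Omega}}$ while a nonzero $(r,q)$ entry forces $r\in I_\Theta$; as $I_{\widehat{\Omega}}\cap I_\Theta=\emptyset$, every summand in $(\mathcal{N}_1^2)_{p,q}=\sum_r (\mathcal{N}_1)_{p,r}(\mathcal{N}_1)_{r,q}$ vanishes, so $\mathcal{N}_1^2=0$. Equivalently, the block product gives at once $\bigl(\begin{smallmatrix} 0 & 0 \\ B & 0 \end{smallmatrix}\bigr)^2=0$, which shows that $\mathcal{N}=\mathcal{N}_1$ is nilpotent with $\mathcal{N}^2=0$. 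I expect the only delicate point to be the stencil bookkeeping in the middle step, namely verifying that $\mathcal{N}_1$ admits no entry coupling two nodes of $\Theta$ (nor any row at a node of $\widehat{\Omega}$); once that off-diagonal block structure is secured, the nilpotency is immediate.
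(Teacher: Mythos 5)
Your proof is correct and takes essentially the same route as the paper: both arguments rest on the observation that a nonzero entry of $\mathcal{N}_1$ must have its row index at a node of $\Theta$ and its column index at a node of $\widehat{\Omega}$, so no index can serve as both, and every term $(\mathcal{N}_1)_{p,r}(\mathcal{N}_1)_{r,q}$ vanishes. Your block-matrix packaging of this fact is just a notational variant of the paper's entrywise contradiction argument.
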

\begin{proof}
Let us denote by $\mathbf{x}_k$ the $k$-th grid node following the lexicographic order. 
The statement is proved by showing that for any $i,j$ and $k$
$$\mathcal{N}_{i,k}\,\mathcal{N}_{k,j}=0,$$
where $\mathcal{N}_{m,n}$ denotes the entry $(m,n)$ of matrix $\mathcal{N}$. In fact, for sake of contradiction, we assume that there exist $i,j$ and $k$ such that
\begin{equation}\label{nil}
\mathcal{N}_{i,k}\neq 0,\qquad \mathcal{N}_{k,j}\neq 0.
\end{equation}
By construction of matrix $\mathcal{N}$, entry $\mathcal{N}_{m,n}\neq 0$ if node $\mathbf{x}_m\in\Theta$ and node $\mathbf{x}_n\in\widehat{\Omega}$ is a neighbor of $\mathbf{x}_m$. Assuming \eqref{nil} implies that $\mathbf{x}_k\in \widehat{\Omega}\cap\Theta$, which leads to a contradiction as these sets are disjoint.
\end{proof}
The following corollary provides sharper bounds than Remark \ref{rem:notsharp} on the spectral radius of the iteration matrices of the iterative IMEX-E Euler method in \eqref{ACiter}--\eqref{CHiter} and of IMEX-E 2SBDF method in \eqref{ACiter2}--\eqref{CHiter2}.
\begin{corollary}\label{cor4}
Let $\mathcal{N}=\mathcal{N}_1$ as defined in \eqref{delhat}. Then, the spectral radii of the iteration matrices of IMEX-E Euler methods in \eqref{ACiter} and \eqref{CHiter2} satisfy the bounds
\begin{equation}\label{t1}
\rho_\phi \leq \frac{D_\phi^2\Delta t^2\rho(\mathcal{M})\sqrt{\|\mathcal{N}\|_1\|\mathcal{N}\|_\infty}}{(\gamma+w \Delta t)(\gamma+w \Delta t+\Delta t D_\phi\rho(\mathcal{M}))},\qquad \text{and} \qquad \rho_c \leq \frac{D_c^2{\Delta t^2}\rho(\mathcal{M})\sqrt{\|\mathcal{N}\|_1\|\mathcal{N}\|_\infty}}{\gamma(\gamma+\Delta t D_c\rho(\mathcal{M}))},
\end{equation}
respectively, where the parameter $\gamma$ is defined by \eqref{eq:gamma}
\end{corollary}
\begin{proof}
The iteration matrices of equations \eqref{ACiter} and \eqref{CHiter} are in the form \eqref{Sig}. As a consequence of Lemma \ref{lemnil}, the assumptions of Theorem \ref{theosig} are satisfied. 
The bounds in the statement are proved by replacing in \eqref{thth2} the corresponding values of $\alpha$ and $\beta$.
\end{proof}

\begin{remark}\label{rem3}
More explicit bounds on $\rho_\phi$ and $\rho_c$ can be obtained by considering that the eigenvalues of $\mathcal{M}$ are known, and that
\begin{equation}\label{rhoMb}
\rho(\mathcal{M})<\frac{4}{\Delta x^2}+\frac{4}{\Delta y^2}.
\end{equation}
\end{remark}
The next corollary provides explicit bounds in terms of the discretization steps, when $\Theta$ is a circle. This particular geometry is relevant for one of the experiments in Section \ref{sec:tests}. The arguments can be straightforwardly adapted to sets of different shape or union of disjoint sets.
\begin{corollary}\label{cor2}
Let $\Theta$ be a circle with radius $r>h=\min\{\Delta x,\Delta y\}$. Then the spectral radii of the iteration matrices of IMEX-E Euler methods in \eqref{ACiter} and \eqref{CHiter2} with $\mathcal{N}=\mathcal{N}_1$ in \eqref{delhat} are such that
\begin{align*}
\rho_\phi<\,\frac{8\sqrt{6}D_\phi^2\Delta t^2 }{h^4(\gamma+w \Delta t)(\gamma+w \Delta t+8\Delta t D_\phi/h^2)},\quad
\rho_c <\, \frac{8\sqrt{6}D_c^2{\Delta t^2}}{h^4\gamma(\gamma+8\Delta t D_c/h^2)}
\end{align*}
respectively, with $\gamma$ in \eqref{eq:gamma}.
\begin{proof}
The proof follows from Corollary \ref{cor4}, considering that with this geometry of $\Theta$ each node $(x_i,y_j)\in\widehat{\Omega}$ can not have more than two neighbors in $\Theta$ and if two, they are in perpendicular directions from $(x_i,y_j)$. Thus,
$$\|\mathcal{N}\|_1\leq \frac{1}{\Delta x^2}+\frac{1}{\Delta y^2}  \leq  \frac{2}{h^2}.$$
Similarly, each node in $\Theta$ can only have no more than three neighbors in $\widehat{\Omega}$. Hence,
$$\|\mathcal{N}\|_\infty\leq \frac{1}{\Delta x^2}+\frac{1}{\Delta y^2}+\frac{1}{ h^2} \leq \frac{3}{h^2}.$$
The statement then follows from Corollary \ref{cor4}, considering also \eqref{rhoMb}.
\end{proof}
\end{corollary}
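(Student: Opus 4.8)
The plan is to obtain the four stated inequalities as direct specializations of the bounds in Corollary \ref{cor4}, by inserting into them quantitative estimates for the two quantities that there remain abstract: the spectral radius $\rho(\mathcal{M})$ and the product $\sqrt{\|\mathcal{N}\|_1\|\mathcal{N}\|_\infty}$ with $\mathcal{N}=\mathcal{N}_1$. Since, as recalled in the proof of Lemma \ref{lemnil}, the entry $\mathcal{N}_{m,n}$ is nonzero only when $\mathbf{x}_m\in\Theta$ and $\mathbf{x}_n\in\widehat{\Omega}$ is a grid-neighbor of $\mathbf{x}_m$, each row sum of $\mathcal{N}$ is controlled by the number (and Laplacian weights) of the $\widehat{\Omega}$-neighbors of a node of $\Theta$, and each column sum by the number of $\Theta$-neighbors of a node of $\widehat{\Omega}$. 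Thus the first step is a purely geometric neighbor-count under the hypothesis that $\Theta$ is a disk of radius $r>h=\Delta x=\Delta y$.

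For the column sums, giving $\|\mathcal{N}\|_1$, I would invoke convexity of the disk: a node $\mathbf{x}_n\in\widehat{\Omega}$ cannot have two \emph{opposite} neighbors inside $\Theta$, since the segment joining them passes through $\mathbf{x}_n$ and would then lie in the convex set $\Theta$, contradicting $\mathbf{x}_n\in\widehat{\Omega}$. Hence at most two neighbors of $\mathbf{x}_n$ lie in $\Theta$, necessarily along perpendicular directions, contributing at most one weight $1/\Delta x^2$ and one weight $1/\Delta y^2$, so $\|\mathcal{N}\|_1\le 2/h^2$. For the row sums, giving $\|\mathcal{N}\|_\infty$, I would use $r>h$ to argue that every node of $\Theta$ keeps at least one neighbor inside $\Theta$, so at most three of its neighbors lie in $\widehat{\Omega}$; the heaviest admissible row is then $2/\Delta x^2+1/\Delta y^2$ (or its symmetric counterpart), i.e. $\|\mathcal{N}\|_\infty\le 3/h^2$. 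Combining, $\sqrt{\|\mathcal{N}\|_1\|\mathcal{N}\|_\infty}\le\sqrt6/h^2$. For the spectral radius I would simply use \eqref{rhoMb}, which with $\Delta x=\Delta y=h$ reads $\rho(\mathcal{M})<8/h^2$.

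The final step is the substitution of $\sqrt6/h^2$ and $8/h^2$ into the four bounds of Corollary \ref{cor4} followed by elementary algebra; for instance the $\phi$-bound of IMEX-E Euler yields numerator $D_\phi^2\Delta t^2(\sqrt6/h^2)(8/h^2)=8\sqrt6\,D_\phi^2\Delta t^2/h^4$ and denominator $(1+w\Delta t)(1+w\Delta t+8\Delta t D_\phi/h^2)$, and the other three cases are identical in structure. The one point demanding care, which I expect to be the only analytic subtlety, is that $\rho(\mathcal{M})$ occurs simultaneously in the numerator and the denominator of each bound, so replacing it by the larger value $8/h^2$ enlarges both; monotonicity must therefore be checked. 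Writing each bound as $K\cdot\tfrac{x}{a+bx}$ with $x=\rho(\mathcal{M})$ and $a,b>0$, one has $\tfrac{d}{dx}\tfrac{x}{a+bx}=\tfrac{a}{(a+bx)^2}>0$, so the expression is increasing in $x$ and the strict bound $x<8/h^2$ is transmitted (and made strict) to the result. The genuine obstacle is not this algebra but the combinatorial-geometric count of the previous paragraph: turning "convexity of the disk" and "$r>h$" into the \emph{sharp} constants $2$ and $3$ that produce the factor $\sqrt6$ is what needs to be argued rigorously, and a careless count (e.g. allowing four exterior neighbors of a $\Theta$-node) would replace $\sqrt6$ by $2\sqrt2$ and weaken the stated bounds.
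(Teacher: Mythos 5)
Your proposal is correct and follows essentially the same route as the paper's proof: bound $\|\mathcal{N}_1\|_1\le 2/h^2$ and $\|\mathcal{N}_1\|_\infty\le 3/h^2$ by the same geometric neighbor counts for the disk, then substitute these together with $\rho(\mathcal{M})<8/h^2$ from \eqref{rhoMb} into the four bounds of Corollary \ref{cor4}. Your explicit justification of the counts (convexity of the disk, $r>h$) and your monotonicity check of $x\mapsto x/(a+bx)$, which legitimizes replacing $\rho(\mathcal{M})$ by its upper bound in both numerator and denominator, are welcome details that the paper leaves implicit.
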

\begin{theorem}\label{theo:conE}

Let $h=\min\{\Delta x,\Delta y\}$. The iterations \eqref{ACiter}--\eqref{CHiter} defining the iterative IMEX-E Euler method and the iterations \eqref{ACiter2}--\eqref{CHiter2} defining the iterative IMEX-E 2SBDF method, converge if
$$\left(h^2>\frac{4\sqrt{2} D_\phi}{w}, \quad \text{or}\quad \Delta t <\frac{\gamma h^2}{4\sqrt{2}D_\phi-w h^2}\right), \quad \text{and} \quad \Delta t <\frac{\gamma(1+\sqrt{3})}{8D_c}h^2,$$
with $\gamma$ as in \eqref{eq:gamma}.
\end{theorem}
\begin{proof}
Consider the bound for $\rho_\phi$ in \eqref{t1}. A sufficient condition for convergence of the iteration \eqref{ACiter} in IMEX-E Euler, is
$$\frac{D_\phi^2\Delta t^2\rho(\mathcal{M})\sqrt{\|\mathcal{N}\|_1\|\mathcal{N}\|_\infty}}{(\gamma+w \Delta t)^2}<1.$$ 
Considering the bound (see Remark \ref{rem3} and, e.g., the proof of Corollary \ref{cor2})\footnote{\color{black}Observe that the factor 32 in \eqref{eq:rhonorm} is obtained combining \eqref{rhoMb} with the largest possible value for $\|\mathcal{N}\|_1=\|\mathcal{N}\|_\infty={4}/{h^2}$. However, depending on the particular geometry of $\Theta$, one can obtain a smaller and more precise value of this factor (see, e.g., the case when $\Theta$ is a circle in Corollary \ref{cor2}).}
\begin{equation}\label{eq:rhonorm}
\rho(\mathcal{M})\sqrt{\|\mathcal{N}\|_1\|\mathcal{N}\|_\infty}\leq \frac{32}{h^4},\qquad h=\min\{\Delta x,\Delta y\},
\end{equation}
convergence is ensured if
$$\left(4\sqrt{2}D_\phi-w h^2\right)\Delta t <\gamma h^2.$$
This is true if
$$h^2>\frac{4\sqrt{2} D_\phi}{w}, \quad \text{or}\quad \Delta t <\frac{\gamma h^2}{4\sqrt{2}D_\phi-w h^2}.$$
Similarly, for ensuring $\rho_c<1$, one obtains from \eqref{t1} the sufficient condition
$$\Delta t <\frac{\gamma(1+\sqrt{3})}{8D_c}h^2.$$
The proof of the statement is completed.
\end{proof}
\begin{remark}\label{r:CFL}
According to Theorem \ref{theo:conI} and Theorem \ref{theo:conE} convergence of all the proposed iterative methods is guaranteed under sufficient conditions of the form 
\begin{equation}\label{eq:CFL}
\Delta t < Kh^2.
\end{equation} 
While such restrictions are impractical in a general context, they are feasible in our case since: 
\begin{itemize}
\item $D_\phi = 6.02 \cdot 10^{-6}$ and $w$ is large (see Remark \ref{rem:stab} and Section \ref{sec:tests}, where $w=4.43\cdot 10^8$). This makes the first condition in these theorems feasible while maintaining satisfactory accuracy.
\item $D_c = 8.50 \cdot 10^{-10}$, implies large $K$ values in the second condition.
\end{itemize}
We observe that restrictions of the type $K\Delta t<h^2$ are typical in the time integration of diffusion-reaction equations by explicit methods. However, for those methods, constant $K$ is large because of the large eigenvalues of $\mathcal{J}_{F_1}$.
\end{remark}

{\color{black}
\subsection{Neumann Boundary Conditions}\label{remsym}}
Theorems \ref{theorhosig} and \ref{theosig} and their corollaries are proved assuming that matrix $\mathcal{P}$ in \eqref{matP} is symmetric. This is the case if Dirichlet boundary conditions \eqref{extbcD} are applied on $\partial \Omega$ or if Neumann boundary conditions \eqref{extbcN} are approximated by first-order finite differences. However, when using the second-order discretization \eqref{kronsum} with \eqref{MN}, this assumption is not satisfied. 
In this section we adapt the arguments to cover this scenario.

Let $\overline{\mathcal{M}}$ be the difference between the Laplacian matrix with Dirichlet and Neumann boundary conditions on $\partial \Omega$, i.e. between \eqref{kronsum} with \eqref{MD} and \eqref{kronsum} with \eqref{MN}. That is,
$$\overline{\mathcal{M}}=I_{m_y}\otimes\frac{1}{\Delta x^2} E_x+\frac{1}{\Delta y^2}E_y\otimes I_{m_x}, \qquad E_r=\left[\begin{array}{ccccc}
0 & -1 & & &\\
0 & 0 & 0 & &\\
& \ddots & \ddots & \ddots &\\
& & 0 & 0 & 0\\
& & & -1 & 0
\end{array}\right]\in \mathbb{R}^{m_r\times m_r},\quad r\in\{x,y\},$$ 
Let
\begin{equation}\label{eq:PN}
\mathcal{P}_N=\mathcal{P}_D-\alpha \overline{\mathcal{M}},
\end{equation} 
where $\mathcal{P}_D$ is the symmetric matrix, obtained from \eqref{matP} in the case of Dirichlet boundary conditions. Then, the following result holds true.
\begin{Lemma}\label{theorhosigNeu} Let $\alpha$ and $\beta$ be positive numbers such that
\begin{equation}\label{eq:alpbetNeu}
\frac{\alpha}{\beta}\left(\frac{1}{\Delta x^2}+\frac{1}{\Delta y^2}\right)<1.
\end{equation}
Then, 
\begin{equation}\label{eq:PNinvNorm}
\|\mathcal{P}_N^{-1}\|_2 < \frac{1}{\beta - \alpha\left(\frac{1}{\Delta x^2}+\frac{1}{\Delta y^2}\right)}.
\end{equation}
\end{Lemma}
\begin{proof}
Taking into account the symmetry of $\mathcal{P}_D$ and using \eqref{eq:rhoPinv}, we have
\begin{equation}\label{eq:step1PNinv}
\|\mathcal{P}_N^{-1}\|_2=\left\|\left(I-\alpha \mathcal{P}_D^{-1}\overline{\mathcal{M}}\right)^{-1}\mathcal{P}_D^{-1}\right\|_2\leq \left\|\left(I-\alpha \mathcal{P}_D^{-1}\overline{\mathcal{M}}\right)^{-1}\right\|_2\left\|\mathcal{P}_D^{-1}\right\|_2=\left\|\left(I-\alpha \mathcal{P}_D^{-1}\overline{\mathcal{M}}\right)^{-1}\right\|_2\rho\left(\mathcal{P}_D^{-1}\right)< \frac{\left\|\left(I-\alpha \mathcal{P}_D^{-1}\overline{\mathcal{M}}\right)^{-1}\right\|_2}{\beta},
\end{equation}
Let us show now that
$$\left\|\alpha \mathcal{P}_D^{-1}\overline{\mathcal{M}}\right\|_2<1.$$
In fact,
$$\left\|\alpha \mathcal{P}_D^{-1}\overline{\mathcal{M}}\right\|_2\leq \alpha \left\|\mathcal{P}_D^{-1}\right\|_2
\left\|\overline{\mathcal{M}}\right\|_2 < \frac{\alpha}{\beta}\left\|\overline{\mathcal{M}}\right\|_2.$$
Moreover,
$$\left\|\overline{\mathcal{M}}\right\|_2=\left\|I_{m_y}\otimes\frac{1}{\Delta x^2} E_x+\frac{1}{\Delta y^2}E_y\otimes I_{m_x}\right\|_2\leq\frac{1}{\Delta x^2}\|E_x\|_2+\frac{1}{\Delta y^2}\|E_y\|_2.$$
Matrices $E_r$ with $r\in\{x,y\}$ are low rank matrices of the form
$$E_r=-e_1e_2^T-e_{m_r}e_{m_r-1}^T,$$
where $e_j$ is the $j$-th vector of the canonical basis of $\mathbb{R}^r$.
Therefore,
\begin{equation*}
\|E_r\|_2 = \rho(E_r^TE_r)=1,
\end{equation*}
since $E_r^TE_r$ is diagonal with 0 or 1 entries.

Combining the above results, and considering assumption \eqref{eq:alpbetNeu}, we have proved that 
\begin{equation}\label{eq:normaPM}
\left\|\alpha \mathcal{P}_D^{-1}\overline{\mathcal{M}}\right\|_2< \frac{\alpha}\beta\left(\frac{1}{\Delta x^2}+\frac{1}{\Delta y^2}\right)<1.
\end{equation}
Therefore, the Neumann series
$$\left(I-\alpha \mathcal{P}_D^{-1}\overline{\mathcal{M}}\right)^{-1}=\sum_{k=0}^\infty \left(\alpha \mathcal{P}_D^{-1}\overline{\mathcal{M}}\right)^k,$$
is convergent. Thus,
\begin{equation}\label{eq:norminv}
\left\|\left(I-\alpha \mathcal{P}_D^{-1}\overline{\mathcal{M}}\right)^{-1}\right\|_2\leq \sum_{k=0}^\infty \left\|\alpha\mathcal{P}_D^{-1}\overline{\mathcal{M}}\right\|_2^k<\sum_{k=0}^\infty \left(\frac{\alpha}\beta\left(\frac{1}{\Delta x^2}+\frac{1}{\Delta y^2}\right)\right)^k= \frac{\beta}{\beta-{\alpha}\left(\frac{1}{\Delta x^2}+\frac{1}{\Delta y^2}\right)}
\end{equation}
It follows then from \eqref{eq:step1PNinv} that
$$\|\mathcal{P}^{-1}_N\|_2<\frac{\left\|\left(I-\alpha \mathcal{P}_D^{-1}\overline{\mathcal{M}}\right)^{-1}\right\|_2}{\beta}<\frac{1}{\beta-{\alpha}\left(\frac{1}{\Delta x^2}+\frac{1}{\Delta y^2}\right)},$$
which concludes the proof.
\end{proof}
Based on the results in Lemma \ref{theorhosigNeu}, the following result holds true. We omit the proof as it follows the same steps that prove Theorem \ref{theorhosig}.

\begin{theorem}\label{thrhosigNeu}
Under assumption \eqref{eq:alpbetNeu}, the spectral radius of matrix 
\begin{equation*}
\Sigma_N=-\alpha\mathcal{P}_N^{-1}\mathcal{N},\qquad \mathcal{N}=\mathcal{N}_1+\mathcal{N}_2
\end{equation*}
with $\mathcal{P_N}$ as in \eqref{eq:PN}, is such that 
\begin{equation}\label{eq:rhosigmaN}
\rho(\Sigma_N)\leq \frac{4\alpha\left(\frac{1}{\Delta x^2}+\frac{1}{\Delta y^2}\right)}{\beta-{\alpha}\left(\frac{1}{\Delta x^2}+\frac{1}{\Delta y^2}\right)}.
\end{equation}
\end{theorem}
We observe that assuming \eqref{eq:alpbetNeu}, is equivalent as requiring that the denominator in \eqref{eq:rhosigmaN} is positive.

\begin{remark}\label{rem:IMEIN}
Note that Lemma \ref{theorhosigNeu} and Theorem \ref{thrhosigNeu} rely on assumption \eqref{eq:alpbetNeu}. After replacing the values of $\alpha$ and $\beta$ appearing in the coefficient matrices of \eqref{ACiter}--\eqref{CHiter2}, assumption \eqref{eq:alpbetNeu} amounts to
\begin{equation}\label{eq:cond1N}
\frac{\Delta t D_\phi}{\gamma+w\Delta t}\left(\frac{1}{\Delta x^2}+\frac{1}{\Delta y^2}\right)<1,\qquad \frac{\Delta t D_c}\gamma \left(\frac{1}{\Delta x^2}+\frac{1}{\Delta y^2}\right)<1,
\end{equation}
with $\gamma$ given in \eqref{eq:gamma}. Constraints on the steps can be derived from \eqref{eq:cond1N}, but we omit them here, since they are automatically satisfied when the sufficient convergence conditions in Theorem \ref{theo:convIN} and Theorem \ref{theo:convEN} below are met.
\end{remark}

By substituting the values of $\alpha$ and $\beta$ into \eqref{eq:rhosigmaN}, the following corollary provides an upper bound for the spectral radii of the iteration matrices in \eqref{ACiter}--\eqref{CHiter2} with $\mathcal{N}=\mathcal{N}_1+\mathcal{N}_2$, under Neumann boundary conditions on $\partial \Omega$.
\begin{corollary}\label{corrhoneu}
In the case of Neumann boundary conditions on $\partial \Omega$, the spectral radii of the iteration matrices of IMEX-I methods in \eqref{ACiter}--\eqref{CHiter2} fulfill the following bounds:
\begin{equation}\label{eq:upbN}
\rho_\phi \leq \frac{4D_\phi\Delta t \left(\frac{1}{\Delta x^2}+\frac{1}{\Delta y^2}\right)}{\gamma+w\Delta t-{D_\phi\Delta t }\left(\frac{1}{\Delta x^2}+\frac{1}{\Delta y^2}\right)},\qquad \rho_c \leq \frac{4D_c\Delta t\left(\frac{1}{\Delta x^2}+\frac{1}{\Delta y^2}\right)}{\gamma-{D_c\Delta t}\left(\frac{1}{\Delta x^2}+\frac{1}{\Delta y^2}\right)},
\end{equation}
assuming that the steps satisfy \eqref{eq:cond1N}, and with $\gamma$ given in \eqref{eq:gamma}.
\end{corollary}
{\color{black}The following result gives sufficient conditions on the steps for the convergence of the IMEX-I methods. The proof is analogous to that of Theorem \ref{theo:conI}.
\begin{theorem}\label{theo:convIN}
Let $h=\min\{\Delta x, \Delta y\}$ and $\Delta t$ satisfy \eqref{eq:cond1N}. The iterations defined in equations \eqref{ACiter}–\eqref{CHiter} for the IMEX-I Euler method, and by \eqref{ACiter2}–\eqref{CHiter2} for the IMEX-I 2SBDF method, converge provided that
$$\left(h^2>\frac{10D_\phi}w, \quad \text{or}\quad \Delta t<\frac{\gamma h^2}{10D_\phi-wh^2} \right), \quad \text{and} \quad \Delta t<\frac{\gamma h^2}{10 D_c}.$$
\end{theorem}
}

Observe that the sufficient condition obtained here for convergence are slightly stronger than their analogue for the case of Dirichlet boundary condition in Theorem \ref{theo:conI}. On the other hand, as noted in Remark \ref{r:CFL}, these restrictions do not prevent the use of relatively large time steps in our case.

The following Lemma is key to adapt the result in Theorem \ref{theosig} to the case of boundary conditions of Neumann type on $\partial\Omega$, and to study sufficient conditions for the convergence of IMEX-E schemes.
\begin{Lemma}\label{Lem:SN}
Let
$$\mathcal{S_N}=\frac{\beta}{\alpha}(\beta\mathcal{P_N}^{-1}-I),$$
with $P_N$ in \eqref{eq:PN}. If $\alpha$ and $\beta$ satisfy \eqref{eq:alpbetNeu}, then
$$\|\mathcal{S_N}\|_2\leq \frac{\beta\rho(\mathcal{M})}{\beta+\alpha \rho(\mathcal{M})}+\frac{\beta\left(\frac{1}{\Delta x^2}+\frac{1}{\Delta y^2}\right)}{\beta-\alpha \left(\frac{1}{\Delta x^2}+\frac{1}{\Delta y^2}\right)}.$$
\end{Lemma}
\begin{proof}
The symmetry of $\mathcal{P}_D$ and \eqref{eq:rhoS}, yield
\begin{equation}\label{eq:normSN}
\|\mathcal{S_N}\|_2\leq \frac{\beta^2}{\alpha}\|\mathcal{P}_N^{-1}-\mathcal{P}_D^{-1}\|_2+\frac{\beta}{\alpha}\|\beta\mathcal{P}_D^{-1}-I\|_2\leq 
\frac{\beta^2}{\alpha}\|\mathcal{P}_N^{-1}-\mathcal{P}_D^{-1}\|_2+\frac{\beta\rho(\mathcal{M})}{\beta+\alpha \rho(\mathcal{M})}.
\end{equation}
To bound the first term at the right hand side, we observe that
$$\mathcal{P}_N^{-1}-\mathcal{P}_D^{-1}=\left[\left(I-\alpha\mathcal{P}_D^{-1}\overline{\mathcal{M}}\right)^{-1}-I\right]\mathcal{P}_D^{-1}=\alpha\mathcal{P}_D^{-1}\overline{\mathcal{M}}\left(I-\alpha\mathcal{P}_D^{-1}\overline{\mathcal{M}}\right)^{-1}\mathcal{P}_D^{-1}.$$
Hence, combining \eqref{eq:rhoPinv}, the first bound in \eqref{eq:normaPM}, and \eqref{eq:norminv} yields,
$$\|\mathcal{P}_N^{-1}-\mathcal{P}_D^{-1}\|_2\leq \|\alpha\mathcal{P}_D^{-1}\overline{\mathcal{M}}\|_2\left\|\left(I-\alpha\mathcal{P}_D^{-1}\overline{\mathcal{M}}\right)^{-1}\right\|_2\left\|\mathcal{P}_D^{-1}\right\|_2\leq \frac{\alpha\left(\frac{1}{\Delta x^2}+\frac{1}{\Delta x^2}\right)}{\beta^2-\alpha\beta\left(\frac{1}{\Delta x^2}+\frac{1}{\Delta x^2}\right)}
$$
The proof is completed by substituting the bound obtained above in \eqref{eq:normSN}.
\end{proof}
\begin{theorem}\label{theosigN}
Let $\alpha$ and $\beta$ satisfy \eqref{eq:alpbetNeu} and $$\Sigma_N=-\alpha\mathcal{P}_N^{-1}\mathcal{N}$$ with $\mathcal{N}$ nilpotent such that $\mathcal{N}^2=0$. Then 
\begin{equation}\label{thth2N}
\rho(\Sigma_N)< \alpha^2\sqrt{\|\mathcal{N}\|_1\|\mathcal{N}\|_\infty}\left(\frac{\rho(\mathcal{M})}{\beta^2+\alpha\beta\rho(\mathcal{M})}+\frac{\frac{1}{\Delta x^2}+\frac{1}{\Delta y^2}}{\beta^2-\alpha\beta \left(\frac{1}{\Delta x^2}+\frac{1}{\Delta y^2}\right)}\right).
\end{equation}
\end{theorem}
\begin{proof}
We can rewrite
$$\Sigma_N=-\frac{\alpha}{\beta^2}\left[\beta\mathcal{N}+{\alpha}\mathcal{S}_N\mathcal{N}\right],\qquad \mathcal{S}_N=\frac{\beta}{\alpha}(\beta\mathcal{P}_N^{-1}-I).$$
With similar steps as those leading to equation \eqref{eqrhosi} in the proof of Theorem \ref{theosig}, we obtain
$$\rho(\Sigma_N)\leq \frac{\alpha^2}{\beta^2}\|\mathcal{S}_N\|_2\|\mathcal{N}\|_2\leq \frac{\alpha^2}{\beta^2}\|\mathcal{S}_N\|_2\sqrt{\|\mathcal{N}\|_1\|\mathcal{N}\|_\infty}.
$$
Lemma \ref{Lem:SN} then yields
$$\rho(\Sigma_N)\leq \frac{\alpha^2}{\beta^2}\|\mathcal{S}_N\|_2\sqrt{\|\mathcal{N}\|_1\|\mathcal{N}\|_\infty}\leq \alpha^2\sqrt{\|\mathcal{N}\|_1\|\mathcal{N}\|_\infty}\left(\frac{\rho(\mathcal{M})}{\beta^2+\alpha\beta \rho(\mathcal{M})}+\frac{\frac{1}{\Delta x^2}+\frac{1}{\Delta y^2}}{\beta^2-\alpha\beta \left(\frac{1}{\Delta x^2}+\frac{1}{\Delta y^2}\right)}\right),
$$
which completes the proof.
\end{proof}
The following corollary provides bounds on the spectral radius of the iteration matrices of iterative IMEX-E Euler in \eqref{ACiter}--\eqref{CHiter} and iterative IMEX-E 2SBDF in \eqref{ACiter2}--\eqref{CHiter2}. It is the analogue of Corollary \ref{cor4}  when Neumann boundary conditions are assigned on the boundary of the extended rectangular domain $\Omega$. The proof is readily obtained substituting the values of $\alpha$ and $\beta$ that define the methods.
\begin{corollary}\label{cor4N}
Let $\mathcal{N}=\mathcal{N}_1$ be as defined in \eqref{delhat}. Then, if the steps satisfy \eqref{eq:cond1N}, the spectral radii of the iteration matrices of IMEX-E Euler methods in \eqref{ACiter}--\eqref{CHiter2} are such that
\begin{equation}\label{t1N}
\begin{array}{l}
\displaystyle{\rho_\phi \leq\frac{ D_\phi^2\Delta t^2\sqrt{\|\mathcal{N}\|_1\|\mathcal{N}\|_\infty}}{\gamma+w \Delta t}\left(\frac{\rho(\mathcal{M})}{\gamma+w \Delta t+\Delta t D_\phi\rho(\mathcal{M})}+\frac{\frac{1}{\Delta x^2}+\frac{1}{\Delta y^2}}{\gamma+w \Delta t-\Delta tD_\phi \left(\frac{1}{\Delta x^2}+\frac{1}{\Delta y^2}\right)}\right)},\\[.8cm]
\displaystyle{\rho_c \leq \frac{D_c^2\Delta t^2\sqrt{\|\mathcal{N}\|_1\|\mathcal{N}\|_\infty}}\gamma\left(\frac{\rho(\mathcal{M})}{\gamma+D_c\Delta t \rho(\mathcal{M})}+\frac{\frac{1}{\Delta x^2}+\frac{1}{\Delta y^2}}{\gamma-D_c\Delta t \left(\frac{1}{\Delta x^2}+\frac{1}{\Delta y^2}\right)}\right),}
\end{array}
\end{equation} 
respectively.
\end{corollary}
The following corollary adapts the bounds to the case when $\Theta$ is a circle. The arguments can be generalized straightforwardly for different shapes.
\begin{corollary}\label{corcircN}
Let $\Theta$ be a circle with radius $r>h=\min\{\Delta x,\Delta y\}$. Then, if \eqref{eq:cond1N} holds true, the spectral radii of the iteration matrices of IMEX-E Euler methods in \eqref{ACiter}--\eqref{CHiter2} with $\mathcal{N}=\mathcal{N}_1$ in \eqref{delhat}, satisfy the bounds:
\begin{equation}\label{eq:upbNE}
\begin{array}{l}
\displaystyle{\rho_\phi \leq \frac{2\sqrt{6}D_\phi^2\Delta t^2}{h^4(\gamma+w \Delta t)} \left(\frac{4}{\gamma+w \Delta t+{8}\Delta t D_\phi/h^2}+\frac{1}{\gamma+w \Delta t- {2}\Delta tD_\phi/h^2}\right)},\\[0.6cm]
\displaystyle{\rho_c \leq \frac{2\sqrt{6}D_c^2\Delta t^2}{\gamma h^4}\left(\frac{4}{\gamma+{8}D_c\Delta t/h^2}+\frac{1}{\gamma-{2}D_c\Delta t/h^2}\right),}
\end{array}
\end{equation} 
respectively.
\end{corollary}
\begin{proof}
The proof follows straightforwardly from the bounds in Corollary \ref{cor4N}, \eqref{rhoMb} and the bounds on the norms of $\mathcal{N}$ in the proof of Corollary \ref{cor2}.
\end{proof}
{\color{black}As usual, we obtain sufficient conditions for convergence by requiring that the bounds on the spectral radii in \eqref{t1N} are smaller than 1. With similar steps as in the proof of Theorem \ref{theo:conE} (but with slightly more involved calculations) the following theorem can be proved.
\begin{theorem}\label{theo:convEN}
Let $h=\min\{\Delta x, \Delta y\}$ and $\Delta t$ satisfy \eqref{eq:cond1N}. The iterations defined in equations \eqref{ACiter}–\eqref{CHiter} for the IMEX-E Euler method, and by \eqref{ACiter2}–\eqref{CHiter2} for the IMEX-E 2SBDF method, converge provided that
$$\left(h^2>\frac{(1+\sqrt{41})D_\phi}{w}, \quad \text{or}\quad \Delta t<\frac{\gamma(\sqrt{41}-1)h^2}{40D_\phi+2w h^2} \right), \quad \text{and} \quad \Delta t<\frac{\gamma(\sqrt{41}-1)h^2}{40D_c},$$
where the parameter $\gamma$ is defined as in \eqref{eq:gamma}.
\end{theorem}
}
Similarly as observed after Theorem \ref{theo:convIN}, the sufficient requirements for convergence are slightly stricter than their analogues  in Theorem \ref{theo:conE} for the case of Dirichlet boundary condition on the boundary of $\Omega$. The constant factors in these bounds can be reduced and refined by exploiting the knowledge of the specific shape of $\Theta$ to have smaller and more precise bounds on the norms of $\mathcal{N}$. The observations in Remark \ref{r:CFL} still hold true.
\begin{remark}
A different approach could be that of keeping the symmetric matrix $\mathcal{P}_D$ in the iterative matrix, and incorporate (if possible) the non-symmetric part arising from $\overline{\mathcal{M}}$ in \eqref{eq:PN} into $\mathcal{N}$. In this case, one may be able to use the slightly more convenient convergence results in Section \ref{sec:Dir}.

In this spirit, the iterative IMEX-I methods can be adjusted simply by redefining $\mathcal{N}=\mathcal{N}_1+\mathcal{N}_2+\overline{\mathcal{M}}$ in \eqref{ACiter}--\eqref{CHiter2}. Note that although matrix $\mathcal{N}$ is changed, the result in Corollary~\ref{cor1} still holds true as the bounds on the norms of $\mathcal{N}$ do not change.

The rationale in IMEX-E methods is that small correction terms can be treated explicitly. We distinguish here two circumstances.

\begin{itemize}
\item[a)] The corrosion interface does not reach $\partial \Omega$ for all $t\in[0,T]$. In this case we do not expect a great difference in the values of the dependent variables close to the boundary when stepping from time $t_n$ to $t_{n+1}$. Thus, we can modify the iterative IMEX-E methods by setting $\mathcal{G}=\mathcal{N}_2+\overline{\mathcal{M}}$.
\item[b)] 
The corrosion interface will reach $\partial \Omega$ at some time $t_c\in[0,T]$. After $t_c$ the value of the dependent variables close to the boundary may change sensibly from one time step to the next one. Thus, it would be recommendable to redefine $\mathcal{N}=\mathcal{N}_1+\overline{\mathcal{M}}$ and $\mathcal{G}=\mathcal{N}_2$ for accuracy. However, in this case the new matrix $\mathcal{N}$ is not nilpotent of order two and thus Theorem \ref{theosig} can not be applied. Nevertheless, the bounds in Remark \ref{rem:notsharp} are valid. 
\end{itemize}
In case a) matrix $\mathcal{N}$ is unchanged and all the convergence results in Section \ref{sec:Dir} still apply. In case b), although one could introduce control techniques to detect the critical time $t_c$ and redefine the matrices only for $t>t_c$, the bounds in Remark \ref{rem:notsharp} are larger than the ones in Corollary \ref{cor4}. So the speed of convergence may be drastically reduced.
\end{remark}
{\color{black}
\begin{remark}
The case of mixed boundary conditions, (Dirichlet type on some edges and Neumann type on the other edges) can be treated in a similar way as done in this section. In this case the bound \eqref{eq:PNinvNorm} may be lower. In fact, if Dirichlet boundary conditions are assigned on parallel edges, for example on the horizontal edges, then
$$\overline{\mathcal{M}}=\frac{1}{\Delta y^2}E_y\otimes I_{m_x}.$$
In this case, assumptions \eqref{eq:alpbetNeu} and \eqref{eq:PNinvNorm} reduce to
$$\frac{\alpha}{\beta\Delta y^2}<1,\qquad \|\mathcal{P}_N^{-1}\|_2<\frac{1}{\beta -\frac{\alpha}{\Delta y^2}},$$
respectively. The consequent results can then be adapted in a straightforwardly.
\end{remark}}
\subsection{Error Analysis}
We observe that even assuming convergence, we can not expect that the solutions of the iterative versions of the IMEX methods satisfy exactly
$$ {\phi}_{i,j}^n=0,\qquad c_{i,j}^n=0,\qquad (x_i,y_j)\in\Theta, $$
for each time step $n=1,\ldots N$, unless infinitely many iterations are performed. So, in an actual algorithm, a stopping criterion is necessary and these conditions hold within a certain tolerance.

In this section, we analyze the propagation of the error in the proposed methods, taking into account both the local truncation error of the scheme and the effect of the chosen tolerance. For brevity we limit the discussion only to the first-order IMEX-I methods with Dirichlet boundary conditions on $\partial \Omega$.

We shall use the Lipschitz continuity of functions $F_1$ and $F_2$,
\begin{equation}\label{eq:Lips}
 \|F_1(\mathbf{x},\boldsymbol{\phi}_1,\mathbf{c}_1)-F_1(\mathbf{x},\boldsymbol{\phi}_2,\mathbf{c}_2)\|_2 \leq L_1 (\|\boldsymbol{\phi}_1-\boldsymbol{\phi}_2\|_2 + \|\mathbf{c}_1-\mathbf{c}_2\|_2), \quad \|F_2(\boldsymbol{\phi}_1)-F_2(\boldsymbol{\phi}_2)\|_2\leq L_2 \|\boldsymbol{\phi}_1 - \boldsymbol{\phi}_2\|_2.
\end{equation}
Let $\boldsymbol{\phi}^{N}=\boldsymbol{\phi}^{N,k_N}$ and $\mathbf{c}^N=\mathbf{c}^{N,k_N}$ denote the solution vectors of the iterative IMEX-I Euler method \eqref{ACiter}--\eqref{CHiter} at time $T$ obtained when, after $k_N$ iterations, the set tolerance is satisfied. Let $\phi^{N}=\phi(t_N)$ and $c^{N}=c(t_N)$ denote the vector with entries given by the exact solution evaluated at the spatial grid points. For any vector function $\mathbf{v}$ defined on the spatial mesh let $\mathbf{v}_{\widehat{\Omega}}$ be the smaller vector extracted from $\mathbf{v}$ with entries given by the values of $\mathbf{v}$ on points in $\widehat{\Omega}.$ Vector $\mathbf{v}_{\Theta}$ is defined analogously.

Let us define the error vectors at points in $\widehat{\Omega}$ and in $\Theta$ as
$$e_\phi^N = [\boldsymbol{\phi}^{N} - \phi^{N}]_{\widehat{\Omega}}, \qquad e_c^N =[ \mathbf{c}^N - c^{N}]_{\widehat{\Omega}},\qquad \epsilon_\phi^N = [\boldsymbol{\phi}^{N} - \phi^{N}]_{\Theta}, \qquad \epsilon_c^N = [\mathbf{c}^N - c^{N}]_{\Theta},$$
and
$$\epsilon_\phi^{N,k} = [\boldsymbol{\phi}^{N,k} - \boldsymbol{\phi}^{N,k-1}]_{\Theta}, \qquad \epsilon_c^{N,k} = [\mathbf{c}^{N,k} - \mathbf{c}^{N,k-1}]_{\Theta}.$$
\begin{theorem}\label{th:error}
Let us define the errors 
$$E^N_{\widehat{\Omega}} = \max\left\{\|e_\phi^N\|_2, \|e_c^N\|_2\right\},\qquad E^N_{\Theta}=\max\left\{ \|\epsilon_\phi^N\|_2, \,\|\epsilon^N_c\|_2\right\},$$
and fix two positive values $\eta$ and $\delta$ independent of the steps. Assume that the steps are such that 
\begin{equation}\label{hstab}
\min\{\Delta x^2, \Delta y^2\} = h^2 > \max \left\{ \frac{8D_\phi\Delta t}{1+w\Delta t}, \frac{L_2D_c}\eta \right\} \qquad \Delta t<\frac{h^2}{8D_c}
\end{equation} and that the iterations stop when 
\begin{equation}\label{stopth}
 \max\left\{\| \epsilon_\phi^{N,k} \|_2, \| \epsilon_{c}^{N,k} \|_2\right\}<\varepsilon<\delta \Delta t.
\end{equation}
Then, the following bounds on the errors hold:
\begin{equation}\label{errth}
E^N_{\widehat{\Omega}}  \leq  C \mathrm{e}^{\xi T}  \left[\Delta t + \Delta x^2 + \Delta y^2 +  \delta \right],\qquad E^N_{\Theta} < \delta T,
\end{equation}
where $C$ and $\xi$ are independent of the steps, and $\xi$ is proportional to $\eta$.
\end{theorem}

\begin{proof}
For simplicity, we assume $\Delta x = \Delta y=h$. Otherwise, the proof can be adapted straightforwardly. Along this proof $C$ denotes generic constants independent of $\Delta t$, $h$ and $T$.

The iterative IMEX-I Euler method can be equivalently written as
\begin{align}\label{stabAC}
[(1+w\Delta t)I_{m_xm_y}\!-\!\Delta tD_\phi(\mathcal{M}-\mathcal{N})]\boldsymbol{\phi}^{N,k}&=\boldsymbol{\phi}^{N-1}\!+\!\Delta t[w\boldsymbol{\phi}^{N-1}+ F_1(\mathbf{x},\boldsymbol{\phi}^{N-1},\mathbf{c}^{N-1})+D_\phi\,\mathcal{N}(\boldsymbol{\phi}^{N,k}-\boldsymbol{\phi}^{N,k-1})],\\\label{stabCH}
[I_{m_xm_y}\!-\!\Delta tD_c(\mathcal{M}-\mathcal{N})]\mathbf{c}^{N,k}&=\mathbf{c}^{N-1}\!+\!\Delta tD_c [(\mathcal{M}-\mathcal{N})F_2(\boldsymbol{\phi}^{N})+\mathcal{N}(\mathbf{c}^{N,k}-\mathbf{c}^{N,k-1})],
\end{align}
with $\mathcal{N}=\mathcal{N}_1+\mathcal{N}_2$. Conditions \eqref{hstab} imply \eqref{eq:convas}, so the iterative procedure converges.
Now we observe that:
\begin{itemize}
\item[•] If $\mathbf{x}_i \in \Theta$, then the $i$-th row and column of $(\mathcal{M}-\mathcal{N})$ are zero. Therefore, by a simultaneous permutation of rows and columns, we can rewrite the matrix in block-diagonal form with a zero block (of size equal to the number of nodes in $\Theta$) and $\mathcal{M}_1$, a principal submatrix of $\mathcal{M}$. Thus each of equations \eqref{stabAC}--\eqref{stabCH} splits into two parts: one involving only the nodes in $\Theta$, the other describing the evolution in $\widehat{\Omega}$. Hence, we can write
$$\phi^N = [\phi_\Theta^N, \phi^N_{\widehat{\Omega}}]^T, \qquad c^N = [c_\Theta^N, c_{\widehat{\Omega}}^N]^T,$$
and we use the same notation for the corresponding blocks of the numerical solutions.  
\item[•] Let $\zeta$ be an eigenvalue of $\beta I - \alpha\mathcal{M}_1$. Since $\mathcal{M}$ is symmetric, by Cauchy interlacing theorem, one has that
$$\beta - \alpha \bar{\lambda} \leq \zeta \leq \beta - \alpha \lambda^*,$$
where $\lambda^*<\bar{\lambda}<0$ are the smallest and largest eigenvalues of $\mathcal{M}$, respectively.
Therefore, for positive $\alpha$ and $\beta$,
\begin{equation}\label{specM1}
  \left\|\left(\beta I - \alpha \mathcal{M}_1\right)^{-1}\right\|_2 \leq \frac{1}{\beta-\alpha \bar{\lambda}}\leq \frac{1}\beta.
\end{equation}
\item[•] Let $\rho_\phi$ and $\rho_c$ be the local truncation errors obtained by inserting the exact solution into \eqref{stabAC} and \eqref{stabCH}, respectively. There exists a constant $C$ such that
\begin{equation}\label{rhobou}
\rho_\phi \leq C(\Delta t + h^2),\qquad \rho_c \leq C(\Delta t + h^2).
\end{equation}
\end{itemize}
Let us consider first the block of equations involving only the values of the solution in $\Theta$. This is
\begin{equation*}
(1+w\Delta t)\boldsymbol{\phi}^{N}_\Theta=(1+w\Delta t)\boldsymbol{\phi}^{N-1}_\Theta+ \Delta t D_\phi\,\mathcal{N}\epsilon_\phi^{N,k},\qquad
\mathbf{c}^{N}_\Theta=\mathbf{c}^{N-1}_\Theta\!+\!\Delta tD_c \mathcal{N}\epsilon^{N,k}_c.
\end{equation*}
Since the exact solution is identically zero in $\Theta$, the above give the equations of the error. Then, using \eqref{hstab} and \eqref{stopth},
\begin{align*}
\|\epsilon_{\phi}^{N}\|_2&\leq \|\epsilon_{\phi}^{N-1}\|_2 + \frac{\Delta t D_\phi}{1+w\Delta t}\| \mathcal{N}\epsilon_\phi^{N,k}\|_2\leq \|\epsilon_{\phi}^{N-1}\|_2 + \frac{\Delta t D_\phi}{1+w\Delta t}\sqrt{\| \mathcal{N}\|_1\| \mathcal{N}\|_\infty}\|\epsilon_\phi^{N,k}\|_2 \leq \frac{8T \varepsilon D_\phi}{(1+w\Delta t) h^2}<\delta T,\\
\|\epsilon_{c}^{N}\|_2&\leq \|\epsilon_{c}^{N-1}\|_2+\Delta tD_c \|\mathcal{N}\epsilon^{N,k}_c\|_2\leq \|\epsilon_{c}^{N-1}\|_2+\Delta tD_c \sqrt{\| \mathcal{N}\|_1\| \mathcal{N}\|_\infty} \|\epsilon_{c}^{N-1}\|_2 \leq \frac{8T\varepsilon D_c}{h^2}<\delta T.
\end{align*}
Thus, we can conclude that $E^N_{\Theta} < \delta T$ and that the error in $\Theta$ increases at most linearly as time advances.

Let us consider now the second block of equations. By inserting the exact solution into these equations and subtracting the resulting relations from them, we obtain the following error equations:
\begin{align*}
[(1+w\Delta t)I-\Delta tD_\phi\mathcal{M}_1]e_\phi^{N}&\,= (1+w\Delta t ) e_\phi^{N-1}+\Delta t\left[F_1(\mathbf{x},\boldsymbol{\phi}^{N-1},\mathbf{c}^{N-1})-F_1(\mathbf{x},\phi^{N-1},c^{N-1})\right]+\Delta t\left[\rho_\phi +\frac{D_\phi}{h^2}\mathbf{k}\circ \epsilon_\phi^{N,k}\right],\\
[I-\Delta tD_c\mathcal{M}_1]e_c^{N}&\,=e_{c}^{N-1}+\Delta tD_c \mathcal{M}_1\left[F_2(\boldsymbol{\phi}^{N})-F_2({\phi}^{N})\right]+\Delta t \left[\rho_c+\frac{D_c}{h^2}\mathbf{k}\circ \epsilon_c^{N,k}\right],
\end{align*}
where $\mathbf{k}$ is a vector with entry $k_{i}$ equal to the number of neighbors of $\mathbf{x}_i$ in $\Theta$, and $\circ$ is the Hadamard product.

Then, considering \eqref{eq:Lips} and \eqref{specM1}, 
\begin{align*}
\|e_\phi^N\|_2 & \leq \left(1+\frac{L_1\Delta t}{1+w\Delta t}\right)\|e_\phi^{N-1}\|_2+\frac{L_1\Delta t}{1+w\Delta t} \|e_c^{N-1}\|_2+\frac{\Delta t}{1+w\Delta t}\left[\rho_\phi +\frac{ \varepsilon D_\phi}{h^2}\|\mathbf{k}\|_2\right],\\
\|e_c^N\|_2 &\leq \|e_c^{N-1}\|_2 + \frac{8 L_2 D_c \Delta t}{h^2} \|e_\phi^N\|_2 + \Delta t\left[\rho_c+\frac{\varepsilon D_c}{h^2}\|\mathbf{k}\|_2\right].
\end{align*}
Substituting the former bound into the latter, one has
$$E^N\leq M E^{N-1}+  \mathbf{v}\Delta t, \qquad \text{where} \qquad M = \left(\begin{array}{cc} 1 + a & a\\
(1+a)b & 1 + ab
\end{array}\right),\qquad \mathbf{v}= \left(\begin{array}{c}
v_1\\ v_2
\end{array}\right),$$ with $$a= \frac{L_1\Delta t}{1+w\Delta t},\quad b= \frac{8 L_2 D_c \Delta t}{h^2},\quad v_1 = \frac{1}{1+w\Delta t}\left[\rho_\phi +\frac{ \varepsilon D_\phi}{h^2}\|\mathbf{k}\|_2\right], \quad v_2 = bv_1 + \rho_c+\frac{\varepsilon D_c}{h^2}\|\mathbf{k}\|_2.$$
Thus,
$$E^N \leq \Delta t\sum_{k=0}^{N-1} M^k \mathbf{v},$$
with
$$\|M\|_2\leq \sqrt{\|M\|_1 \|M\|_\infty}\leq 1+b+C\Delta t \leq 1+\xi\Delta t,$$
where, considering \eqref{hstab}, $\xi$ does not depend on the steps.
Then,
$$\|E^N\|_2 \leq  C \Delta t \sum_{k=0}^{N-1} (1+\xi\Delta t)^k \|\mathbf{v}\|_2\leq C \frac{\mathrm{e}^{ \xi T }}{\xi }\|\mathbf{v}\|_2.$$
Considering \eqref{rhobou}, we have that
$$ v_1 \leq \frac{C}{1+w\Delta t}\left[\Delta t + h^2 +\frac{ \varepsilon D_\phi}{h^2}\right],\quad v_2 \leq C\left[\Delta t + h^2 +\frac{ \varepsilon}{h^2}\left(\frac{D_\phi}{1+w\Delta t}+D_c\right)\right],$$
therefore, there exists a constant $C$ such that
$$\|E^N\|_2 \leq C \mathrm{e}^{ \xi T }\left[\Delta t + h^2 +\frac{ \varepsilon}{h^2}\left(\frac{D_\phi}{1+w\Delta t}+D_c\right)\right]\leq C \mathrm{e}^{ \xi T }\left[\Delta t + h^2 + \delta \right],$$
proving the statement.\\
\end{proof}
\begin{remark}\label{rem:delta}
We observe that Theorem \ref{th:error} guarantees stability of the method for sufficiently small $\Delta t$ and that the impact of the tolerance in \eqref{stopth} is limited if $\delta < \max\{\Delta t,h^2\}$. However, condition \eqref{hstab} provides an absolute lower bound on $h$, which is required to control $\xi$. We emphasize that this condition is only sufficient for controlling numerical errors, and that in practice, we have not observed instabilities for small values of $h$, provided that $\Delta t$ satisfies \eqref{hstab}. 
\end{remark}

\subsection{Stopping Criterion}\label{sec:stop}
While Theorem \ref{th:error} provides one single stopping criterion \eqref{stopth}, in practical implementations, we fix suitably small positive tolerances $\varepsilon_1$, $\varepsilon_2$ and $\varepsilon_3$, and terminate the iterations once the following conditions are satisfied:
\begin{align}\label{tolphiomega}
&\max_{i,j}|{\phi}_{i,j}^{n,k+1}-{\phi}_{i,j}^{n,k}|<\varepsilon_1,\quad (x_i,y_j)\in \widehat{\Omega},\\\label{tolphitheta}
& \max_{i,j}|{\phi}_{i,j}^{n,k+1}|<n\varepsilon_2{\color{black}/N}, \quad\text{or} \quad \max_{i,j}|{\phi}_{i,j}^{n,k+1}-{\phi}_{i,j}^{n,k}|<\varepsilon_3, \quad (x_i,y_j)\in {\Theta}\\\label{tolcomega}
&\max_{i,j}|{c}_{i,j}^{n,k+1}-{c}_{i,j}^{n,k}|<\varepsilon_1,\quad (x_i,y_j)\in \widehat{\Omega}, \\\label{tolctheta}
& \max_{i,j}|c_{i,j}^{n,k+1}|<n\varepsilon_2{\color{black}/N},\quad \text{or} \quad \max_{i,j}|{c}_{i,j}^{n,k+1}-{c}_{i,j}^{n,k}|<\varepsilon_3\quad (x_i,y_j)\in {\Theta}.
\end{align}
In \eqref{tolphitheta} and \eqref{tolctheta} we introduce alternative stopping conditions that rely on the approximations within $\Theta$. In this region, the exact solution is known a priori and is identically zero. Hence, instead of estimating the error indirectly, we can compute it exactly and apply a direct control. 

The first conditions in \eqref{tolphitheta} and \eqref{tolctheta} ensure that, at the final time step ($n=N$), the absolute actual error in $\Theta$ remains below the prescribed tolerance $\varepsilon_2$, accounting for the fact that the accumulated error in $\Theta$ may grow linearly in time, as established in Theorem \ref{th:error} (see \eqref{errth}). 

This provides a practical relaxation of the stopping criterion \eqref{stopth} for large $n$, when the corrosive front is far from $\Theta$, and the influence of the errors inside $\Theta$ on the most critical part of the simulation becomes marginal.

Conversely, in the early stages (small $n$), when the interface is close to $\Theta$, the stopping condition from Theorem~\ref{th:error} remains the dominant one. 

On the other hand, conditions \eqref{tolphiomega} and \eqref{tolcomega} are introduced to extend the error control to $\widehat{\Omega}$, ensuring that the solution in $\widehat{\Omega}$ is only marginally influenced by the residual in $\Theta$, especially when \eqref{stopth} is relaxed by \eqref{tolphitheta} and \eqref{tolctheta}.
\subsection{Implementation Sketch}
This section provides a schematic recap of the workflow for the proposed technique, specifically when problem \eqref{AC}--\eqref{CH} is defined on non-rectangular domains. Algorithm \ref{alg:nonrec} outlines the implementation of the iterative IMEX Euler methods. As noted in Section \ref{sec:alg1}, the bulk of the computation is given by the matrix multiplications. In Algorithm \ref{alg:nonrec}, these need to be performed multiple times per time step. Assuming that a total of $K$ iteration is needed to solve \eqref{ACiter} and \eqref{CHiter} within the desired tolerance, the overall computational cost of the algorithm can be estimated as $\sim 4KNm_xm_y(m_x+m_y)$ floating-point operations.
The procedure for the iterative IMEX 2SBDF methods is analogous, but requires an initialization step due to its two-step nature.

The memory requirements are similar as those of Algorithm \ref{alg:rec}. However, in this case, matrices $\mathcal{N}$ and $\mathcal{G}$ must also be stored. Since these are sparse, with the number of non-zero entries equal to the number of grid points in $\Theta$ at the initial time, the additional memory overheads are minimal. Hence, the algorithm remains memory efficient overall.

\begin{algorithm}[h!]
\caption{
Iterative IMEX Euler methods (non-rectangular domains)}\label{alg:nonrec}
\begin{algorithmic}[1]
\State Initialize the system parameters and functions and the spatial grid on an extended rectangular domain $\Omega$.
\State Define the initial conditions $\boldsymbol{\phi}^0$ and $\mathbf{c}^0$ in $\Omega$ (assign zero values in $\Theta$).
\State Build the matrices $\mathcal{M}_x$ and $\mathcal{M}_y$ in \eqref{genSylv} considering the assigned boundary conditions on $\partial\Omega$.

\State Compute their spectral factorization.
\State Choose a value for $w$ considering Remark\ref{rem:stab}.

\State Define the sparse matrices $\mathcal{N}$ and $\mathcal{G}$. \Comment{Adapt based on IMEX-I or IMEX-E formulas}

\For{$n=1,\ldots N$}\Comment{Advance in time}

\State Set $k=0$, $\boldsymbol{\phi}^{n+1,k}=\boldsymbol{\phi}^{n}$, $\mathbf{c}^{n+1,k}=\mathbf{c}^{n}$, and the tolerances $\varepsilon_1, \varepsilon_2, \varepsilon_3.$

\While{\eqref{tolphiomega}--\eqref{tolphitheta} are not satisfied both} \Comment{Iterations to solve \eqref{ACiter}}

\State Recast the right hand side of \eqref{ACiter} into a matrix of dimension $(m_x,m_y)$.
\State Compute $\boldsymbol{\phi}^{n+1,k+1}$ using formula \eqref{solSyl}.
\EndWhile
\While{\eqref{tolcomega}--\eqref{tolctheta} are not satisfied both}\Comment{Iterations to solve \eqref{CHiter}}

\State Recast the right hand side of \eqref{CHiter} into a matrix of dimension $(m_x,m_y)$.
\State Compute $\mathbf{c}^{n+1,k+1}$ using formula \eqref{solSyl}.
\EndWhile

\EndFor
\end{algorithmic}
\end{algorithm}

\section{Numerical experiments}\label{sec:tests}
In this section we propose some numerical tests to show the performances of the proposed methods in terms of accuracy and efficiency. 
In all tests, the two-step second-order IMEX 2SBDF with step $\Delta t$ is initialized by performing $4/\Delta t$ steps of the first-order IMEX Euler with time step $(\Delta t/2)^2$ to have an accurate approximate solution at time $t=\Delta t$. We compare the proposed IMEX schemes with the numerical method introduced in \citep{GaoFD}, that combines a standard second-order finite difference approximation in space with the second-order accurate exponential Rosenbrock-Euler method in time for the stiff equation \eqref{AC} and with the implicit midpoint rule for the time integration of \eqref{CH}. The two equations are suitably decoupled, by using an explicit second-order approximation for $c$ in \eqref{AC}. This scheme is implemented in \textsc{Matlab} as in \citep{Conte} and the function $\varphi_1$ in the exponential Rosenbrock-Euler method is calculated on matrix arguments by the algorithm \texttt{expmvp.m} in \citep{Niesen}.

We also compare our schemes with a method that couples a second-order finite difference spatial discretization with the renowned two-stage second-order Runge Kutta IMEX scheme proposed in \citep{Ascher}. This method, commonly referred as ARS(2,2,2), is widely used for the solution of stiff equations including phase-field models (see the recent references \citep{Zhang,Fu}) and it has several advantageous properties: it is $L$-stable, stiffly accurate in both its implicit and explicit components, and its implicit part is only diagonally implicit with identical diagonal elements. Consequently, its computational cost is limited, since the numerical solution coincides with the second stage, and both stages require solving linear systems with the same sparse coefficient matrix. Therefore, an LU factorization of this matrix can be computed once at the beginning, after which only the solution of four narrow-banded triangular systems (two per stage) is needed for each PDE at every time step.

It is crucial to note that due to the stiffness of the reaction term $F_1$, also this method is unstable for equation \eqref{AC}, unless tiny time steps are used. Thus, as done for the IMEX schemes proposed in this paper, in order to use larger steps, we apply this method to system \eqref{AC}-\eqref{CH} after regularizing the first equation as in \eqref{relgen}.

All tests have been performed in \textsc{Matlab} R2024b on a 64-bit Windows desktop computer with a 2.10 GHz 12th Gen. Intel(R) Core(TM) i7-12700 Processor and 16.0 GB Memory. 
\subsection{Pencil electrode test}\label{sec:pencil2D}
As a first benchmark problem we consider a variant of the pencil electrode test in \citep{Ernst,GaoFD,GaoFE,mai}, already proposed in \citep{Conte}. The specimen is a 300 $\mu$m long and 25 $\mu$m wide metal wire mounted on an epoxy coating which leaves only the two ends of the wire exposed to an electrolyte solution.

This setting is modeled by system \eqref{AC}--\eqref{hg} on a rectangular domain $\Omega=[0,25]\times[0,300]$ with homogeneous Dirichlet boundary conditions at the wire ends,
$$\phi(x,0,t)=c(x,0,t)=\phi(x,300,t)=c(x,300,t)=0,$$
homogeneous Neumann boundary conditions on the rest of the boundary,
$$\frac{\partial}{\partial n}\phi(0,y,t)=\frac{\partial}{\partial n}c(0,y,t)=\frac{\partial}{\partial n}\phi(25,y,t)=\frac{\partial}{\partial n}c(25,y,t)=0,$$
and the constant initial condition
$$\phi(x,y,0)=c(x,y,0)=1$$
in the interior of $\Omega$. As in \citep{Conte} we integrate till the final time $T=225$s. 

To measure the accuracy of the different methods we compute a reference solution $(\phi_{\text{ref}},c_{\text{ref}})$, by using the method in \citep{Conte,GaoFD} on a fine grid with steps
$$\Delta x=\Delta y=0.5\mu\text{m},\qquad \Delta t=10^{-4}\text{s}$$
and tolerance $10^{-8}$ for the computation of the matrix function $\varphi_1$.  We observe that halving the time step does not produce any effect on the values of the errors that we report in this section, so we can consider the solution obtained in this way a good reference to compare the accuracy of the different methods. The errors in the solution $(\phi,c)$ are computed at the final time as 
$$\text{Err}_\phi=\frac{\|\phi-\phi_{\text{ref}}\|}{\|\phi_{\text{ref}}\|},\qquad \text{Err}_c=\frac{\|c-c_{\text{ref}}\|}{\|c_{\text{ref}}\|},$$
where $\|\cdot\|$ is the Euclidean vector norm.

We first solve the problem by IMEX Euler method \eqref{finAC}--\eqref{finCH} with
$$
\Delta x=\Delta y=1\mu\text{m},\qquad \Delta t=10^{-3}\text{s},\qquad w=4.43\cdot 10^8.
$$
The value of $w$ has been obtained by suitably estimating (a priori) the module of the largest eigenvalue of the Jacobian matrix $\partial_{\boldsymbol{\phi}}F_1(\boldsymbol{\phi},\mathbf{c})$ on the solution along the full integration time and space domain. With this value of $w$ condition \eqref{fov} with \eqref{AwBw2} is satisfied at each time step. 
\begin{figure}[t]
\centerline{
\includegraphics[scale=0.45]{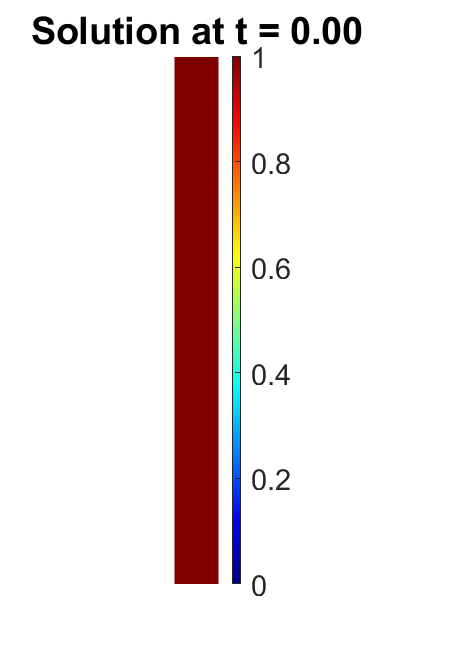}
\includegraphics[scale=0.45]{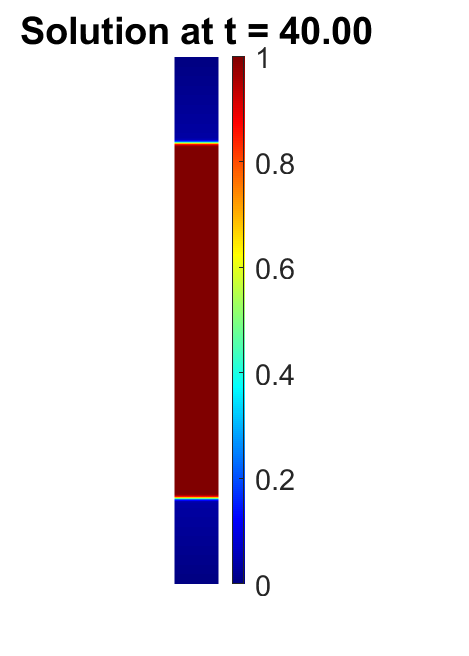}
\includegraphics[scale=0.45]{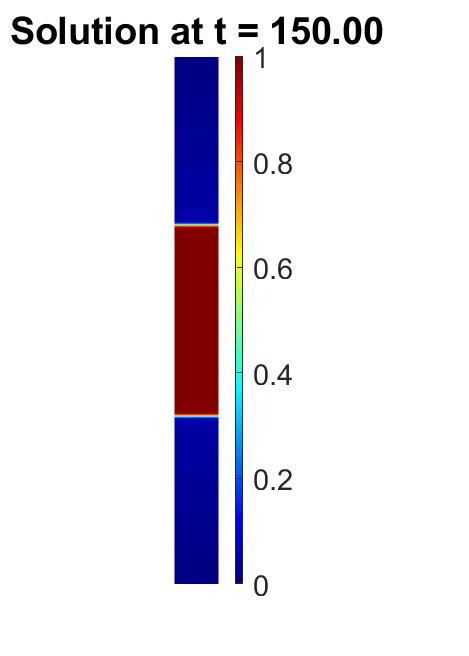}
\includegraphics[scale=0.45]{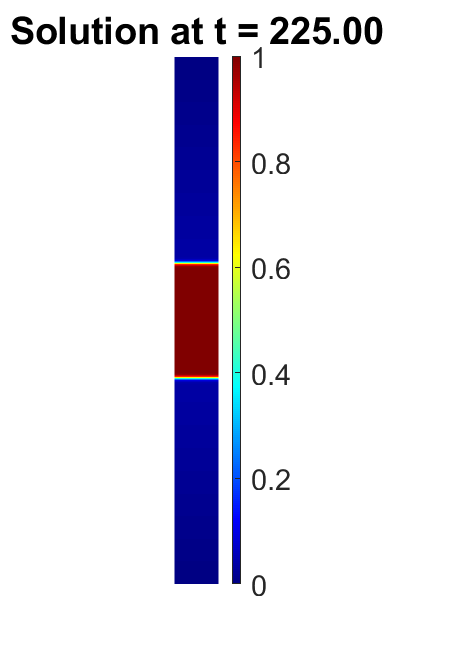}
}
\caption{Pencil electrode test. Configuration of $c$ given by IMEX Euler with $\Delta x=\Delta y=1\mu\text{m},$ and $\Delta t=10^{-3}\text{s}$ at different times.}\label{fig:pencil}
\end{figure}
\begin{figure}[t]
\centerline{
\includegraphics[scale=0.45]{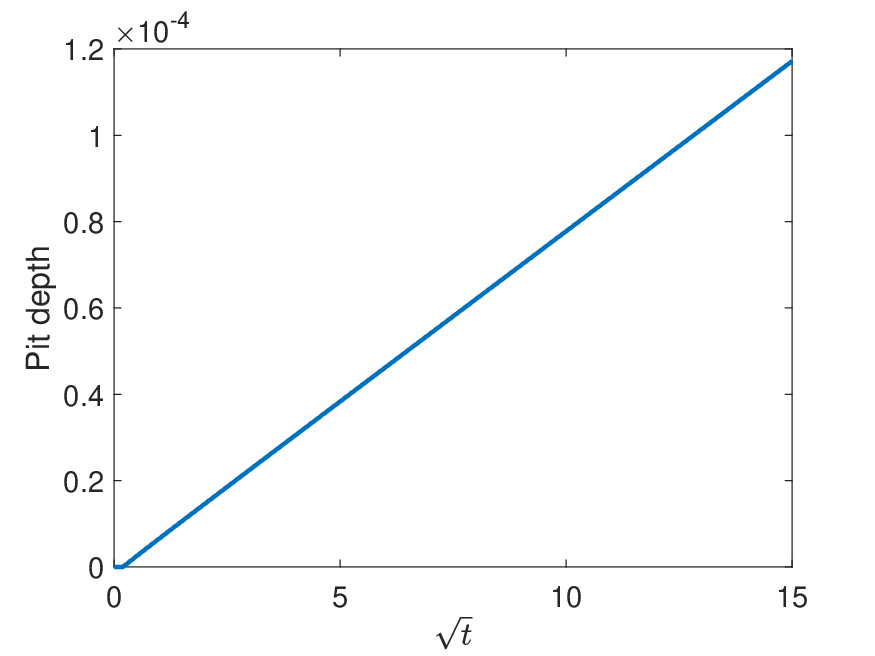}
}
\caption{Pencil electrode test. Position of the lower corrosion front in the solution computed by IMEX Euler with $\Delta x=\Delta y=1\mu\text{m},$ and $\Delta t=10^{-3}\text{s}$ at different times.}\label{fig:pitpencil}
\end{figure}
In Figure \ref{fig:pencil} we show the solution at times $t=0, 40, 150$ and $225$. Figure \ref{fig:pitpencil} shows that the corrosive front advances with a velocity that is proportional to the square root of time, in agreement with the chemical theory of corrosion and the laboratory and computational results in \citep{Ernst,GaoFD,GaoFE,mai,Conte}.

The analogue figures relative to the solutions of IMEX 2SBDF, of ARS(2,2,2), and of the method in \citep{GaoFD} on the same grid do not show any visible difference, and so we omit them.

In order to make some quantitative comparisons, we first calculate the error in the solution of IMEX Euler method at different times. The results are reported on the left of Table \ref{tab:pencil}.

We now solve the system on the same spatial grid, first with IMEX 2SBDF and ARS(2,2,2) using the same value of $w$ as above, and finally with the method in \citep{GaoFD} with tolerance of $10^{-4}$ for the computation of function $\varphi_1$ in the exponential Rosenbrock-Euler method. The time steps are chosen in order to obtain solution errors comparable to those given by IMEX Euler. They are $\Delta t=20\cdot 10^{-3}$s for IMEX 2SBDF, $\Delta t = 3\cdot{10}^{-3}$s for ARS(2,2,2), and $\Delta t=4\cdot{10}^{-3}$s for the method in \citep{GaoFD}.
\begin{table}
\begin{center}
\begin{tabular}{||c||c|c|c||c|c|c||c|c|c||c|c|c||}
\hline
\hline
&\multicolumn{3}{|c||}{IMEX Euler \tabstretch} & \multicolumn{3}{c||}{IMEX 2SBDF} & \multicolumn{3}{c||}{Method in \citep{GaoFD}} & \multicolumn{3}{c||}{ARS(2,2,2)}\\
&\multicolumn{3}{|c||}{$\Delta t=1\cdot 10^{-3}$s \tabstretch} & \multicolumn{3}{c||}{$\Delta t=20\cdot 10^{-3}$s} & \multicolumn{3}{c||}{$\Delta t=4\cdot 10^{-3}$s} & \multicolumn{3}{c||}{$\Delta t=3\cdot 10^{-3}$s}\\
\hline
$t$ (s)& $\text{Err}_\phi$\tabstretch & $\text{Err}_c$ & El. time & $\text{Err}_\phi$\tabstretch & $\text{Err}_c$ & El. time & $\text{Err}_\phi$\tabstretch & $\text{Err}_c$ & El. time & $\text{Err}_\phi$\tabstretch & $\text{Err}_c$ & El. time \\
\hline
1&0.044&0.053&0.20&0.047&0.057&0.07&0.044&0.052&9.27 & 0.043 & 0.052 & 0.66\\
20&0.041&0.048&3.75&0.027&0.032&0.30&0.046&0.054&142.21 & 0.042 & 0.050 & 12.13\\
40&0.042&0.050&7.68&0.027&0.032&0.52&0.048&0.057&256.92 & 0.044 & 0.052 & 24.26\\
70&0.045&0.053&14.04&0.029&0.034&0.88&0.052&0.061&427.42 & 0.047 & 0.056 &42.04\\
100&0.049&0.058&20.13&0.031&0.036&1.11&0.056&0.066&597.91 & 0.051 & 0.061 & 60.03\\150&0.056&0.067&30.27&0.035&0.042&1.78&0.064&0.076&859.14 & 0.059 & 0.070 & 89.69\\
225&0.073&0.086&46.83&0.045&0.053&2.47&0.084&0.099&1217.50 & 0.077& 0.091&139.12\\
\hline
\hline
\end{tabular}
\end{center}
\caption{Pencil electrode test. 2-norm relative error in $\phi$ and $c$ and elapsed time in seconds for different methods at different times.}\label{tab:pencil}
\end{table}

A comparison of the results is given in Table \ref{tab:pencil}. We observe that IMEX 2SBDF is by far the most efficient of the four methods, yielding the most accurate solutions in a significantly shorter computational time. However, both the proposed IMEX methods are more efficient then the methods from the literature. Note that, as reported in \citep{GaoFD,Conte} for similar experiments, the exponential method advances much slower than the real phenomenon. Thus, accurate predictions can be obtained on highly powerful computers only, in contrast with the methods proposed in this paper (see the times reported in Table \ref{tab:pencil}).

We complete this section with one last experiment to test how the size of the steps affects the computational times of the proposed IMEX methods. We first repeat the experiment, with final time $T=225$s, choosing $\Delta t = 10^{-3}$s and $\Delta t = 10^{-2}$s for IMEX Euler and IMEX 2SBDF, respectively, and compare the computational times as $h=\Delta x=\Delta y$ is progressively reduced starting from $h=1\mu$m. Next, we reset $\Delta x=\Delta y=1\mu$m and compare the computational time for different values of the time steps.
 
The results, shown in Figure \ref{fig:dtvsCPU}, indicate that the computational time scales linearly with the time step and roughly cubically with the grid resolution $h$. This is in agreement with the observations made in Section \ref{sec:alg1}.

\begin{figure}[t]
\begin{tikzpicture}
\begin{axis}[
    xmode=log, 
    ymode=log, 
    xlabel={Space step}, 
    ylabel={CPU time}, 
    grid=major, 
    width=6.5cm, 
    height=5cm, 
    xtick={0.00000025,0.0000005, 0.00000075,0.000001}, 
    xticklabels={$0.25h$,$0.5h$, $0.75h$,$1h$}, 
    ticklabel style={/pgf/number format/fixed}, 
    ymax=10000,
]
\addplot coordinates {
	(0.00000025, 2014.63)    
    (0.0000005, 277.55)
    (0.00000075, 99.25)
    (0.000001, 46.83)
};
\addplot coordinates {
	(0.00000025, 191.64)    
    (0.0000005, 26.37)
    (0.00000075, 9.85)
    (0.000001, 4.78)
};
\addplot[
        black,
        dashed,
        thick,
        domain=0.35e-6:0.7e-6,
    ]
    {1.23125e-17 * x^(-3)};
\node at (axis cs:0.00000085, 50) [anchor=north, font=\small, black] {slope 3};
\end{axis}
\end{tikzpicture}
\begin{tikzpicture}
\begin{axis}[
   xmode=log, 
    ymode=log, 
    xlabel={Time step},
    ylabel={CPU time}, 
	legend style={
                at={(1.05,1)}, 
                anchor=north west,
                cells={align=left},
            },
            clip=false, 
    grid=major, 
    width=6.5cm, 
    height=5cm, 
    xtick={0.5,0.75,1, 2}, 
    xticklabels={$0.5\Delta t$, $0.75\Delta t$, $1 \Delta t$,$2 \Delta t$}, 
    ticklabel style={/pgf/number format/fixed}, 
    ymax=150,
]
\addplot coordinates {    
    (0.5, 91.03)
    (0.75, 61.68)
    (1, 46.83)
    (2,21.89)
};
\addlegendentry{IMEX Euler ($\Delta t = 10^{-3}$s)}
\addplot coordinates {
	(0.5,9.91)	
	(0.75, 6.81)
    (1, 5.09)
    (2, 2.47)
};
\addlegendentry{IMEX 2SBDF ($\Delta t = 10^{-2}$s)}
\addplot[
        black,
        dashed,
        thick,
        domain=0.65:1.25,
    ]
    {15 * x^(-1)};    
\node at (axis cs:1.5, 15) [anchor=north, font=\small, black] {slope 1};
\end{axis}
\end{tikzpicture}
\caption{{\color{black}Pencil electrode test. Effect of the spatial grid (left) and of the time step (right) on the computational times of matrix-oriented IMEX methods. We have set $\Delta t = 10^{-3}$s for IMEX Euler and $\Delta t = 10^{-2}$s for IMEX 2SBDF. In all cases, $h=10^{-6}$m and the final time is $T=225$s.}}
    \label{fig:dtvsCPU}
\end{figure}
\subsection{Circular pit growth}\label{sec:circ}
Next experiment is a variant of the semi-circular pit growth test in \citep{Ernst,GaoFD,GaoFE,mai}. To mimic a common real pitting phenomenon we consider a rectangular specimen with a circular pit in the interior.
We assume that the specimen is 200$\mu$m wide and 100$\mu$m high, that the pit has diameter 4$\mu$m and that it is centered at the middle of the specimen. 
In order to describe this situation, we assign to system \eqref{AC}--\eqref{hg} homogeneous Neumann boundary conditions on the exterior rectangular boundary and homogeneous Dirichlet boundary conditions on the pit surface. As in \citep{Conte} we integrate till the final time $T=100$s. The Laplace operator is discretized by matrix $\mathcal{M}$ in \eqref{kronsum} with \eqref{MN}.

Also in this case a formula for the exact solution is not available, and thus we calculate a reference solution exactly as before, to estimate the accuracy of the methods to compare. Again, we have verified that halving the time step used for the calculation of the reference solution does not alter the values of the errors reported in this section.
\begin{figure}[t]
\centerline{
\includegraphics[scale=0.6]{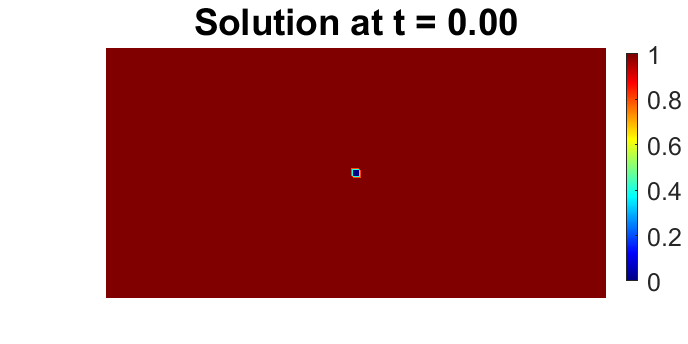}
\includegraphics[scale=0.6]{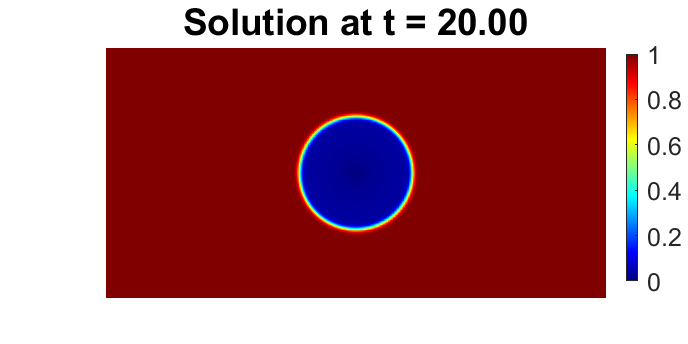}
}
\centerline{
\includegraphics[scale=0.6]{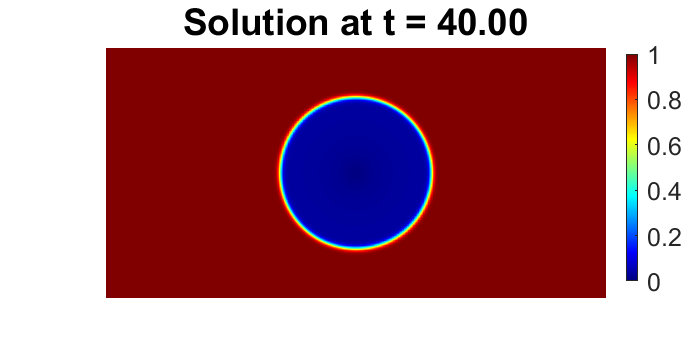}
\includegraphics[scale=0.6]{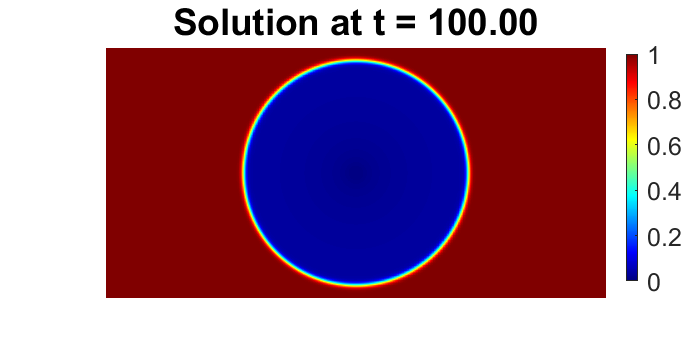}
}
\caption{Circular pit growth. Configuration of $c$ given by iterative IMEX-E Euler with $\Delta x=\Delta y=1\mu\text{m},$ and $\Delta t=2\cdot 10^{-3}\text{s}$ at different times.}\label{fig:circ}
\end{figure}

We first solve this problem by the first-order iterative IMEX-E Euler method \eqref{ACiter}--\eqref{CHiter} with 
$$
\Delta x=\Delta y=1\mu\text{m},\qquad \Delta t=2\cdot 10^{-3}\text{s},\qquad w=4.43\cdot 10^8.
$$
The iterations are performed until satisfying the stopping criteria in  Section \ref{sec:stop} with $\varepsilon_1=10^{-4}$, $\varepsilon_2=10^{-3}$ and $\varepsilon_3=10^{-8}$. Here and henceforth, the tolerances $\varepsilon_2$ and $\varepsilon_3$ are tuned in order to have errors no larger than $10^{-3}$ in $\Theta$. Note that here and in all circumstances we choose $\varepsilon_3<\delta \Delta t$ with $\delta < \Delta t$ according to Remark \ref{rem:delta}.

For this problem bounds on the spectral radius of the iteration matrices of the iterative IMEX-E methods are explicitly given in Corollary~\ref{cor2} and are discussed in Section \ref{sec:valspecrad}.

The graphs of the solution at different times are given in Figure \ref{fig:circ}. As in the previous test, the pit depth growth is directly proportional to the square root of time, in agreement with the results in \citep{GaoFD,GaoFE, Ernst, mai, Conte}. For sake of brevity, we omit the corresponding graph which is analogue to the one reported in Figure \ref{fig:pitpencil} for the pencil electrode test.
\begin{table}
\begin{center}
\begin{tabular}{||c||c|c|c||c|c|c||c|c|c||c|c|c||}
\hline
\hline
&\multicolumn{3}{|c||}{Iter. IMEX-E Euler \tabstretch} & \multicolumn{3}{c||}{Iter. IMEX-E 2SBDF} & \multicolumn{3}{c||}{Method in \citep{GaoFD}} & \multicolumn{3}{c||}{ARS(2,2,2)} \\
&\multicolumn{3}{|c||}{$\Delta t=2\cdot 10^{-3}$s \tabstretch} & \multicolumn{3}{c||}{$\Delta t=6\cdot 10^{-3}$s \tabstretch} & \multicolumn{3}{c||}{$\Delta t=5\cdot 10^{-3}$s} & \multicolumn{3}{c||}{$\Delta t=3\cdot 10^{-3}$s}\\
\hline
$t$ (s)& $\text{Err}_\phi$\tabstretch & $\text{Err}_c$ & El. time & $\text{Err}_\phi$\tabstretch & $\text{Err}_c$ & El. time & $\text{Err}_\phi$\tabstretch & $\text{Err}_c$ & El. time & $\text{Err}_\phi$\tabstretch & $\text{Err}_c$ & El. time \\
\hline
1&0.031&0.036&0.76&0.017&0.020&1.62&0.025&0.030&22.26 & 0.023  & 0.027 & 15.98\\
20&0.059&0.068&11.84&0.048&0.056&10.02&0.056&0.065&624.66 & 0.054 & 0.063 & 284.05\\
40&0.079&0.091&24.41&0.068&0.079&17.27&0.075&0.087&1219.02 & 0.074 & 0.086  & 562.56 \\
70&0.106&0.121&42.44&0.095&0.109&29.08&0.113&0.098&2055.81 & 0.101  & 0.116  & 987.93\\
100&0.132&0.150&59.19&0.120&0.137&39.00&0.121&0.138&2809.03 & 0.127 & 0.144 & 1380.19\\
\hline
\hline
\end{tabular}
\end{center}
\caption{Circular pit growth. 2-norm relative error in $\phi$ and $c$ and elapsed time in seconds for different methods at different times.}\label{tab:circ}
\end{table}

In Table \ref{tab:circ} we compare the efficiency of iterative IMEX-E Euler, iterative IMEX-E 2SBDF, the method in \citep{GaoFD}, and ARS(2,2,2).  The choice of the different time steps is made in order to obtain similar solution errors. As in the previous experiment, we set tolerance $10^{-4}$ when evaluating function $\varphi_1$ in the implementation of the exponential method, whereas we choose $\varepsilon_3=3\cdot 10^{-8}$ in the stopping criteria \eqref{tolphiomega}--\eqref{tolctheta} for the iterative IMEX-E 2SBDF. The same value of $w$ is used for all IMEX schemes.

We observe that the methods in the literature are the least efficient. 
The two iterative IMEX schemes perform similarly for short time simulations. However, the second-order accurate method is the most efficient over medium/long time spans. The reason for this improved performance is a drastic reduction in the number of iterations to fulfill the stopping criteria \eqref{tolphiomega}--\eqref{tolctheta} as time increases (see Figure \ref{fig:iter}). We observe that while both methods fulfill \eqref{tolphiomega}--\eqref{tolphitheta} after no more than four iterations, satisfying \eqref{tolcomega}--\eqref{tolctheta} is more demanding. However, the number of iterations required by Iter. IMEX-E Euler is relatively small, whereas more than 40 iterations are performed at the first steps of Iter. IMEX-E 2SBDF. Our guess is that this is related to the lack of regularity of the initial condition, that has jumps. As time advances and the solution is smoothed, the number of iterations decreases, and it stays below 15 after time $t=10$s. We observe that both iterative IMEX-E methods make actual predictions on our machine, since their computational times are lower than the effective time of the experiment.
\begin{figure}[t]
\centerline{
\includegraphics[scale=0.45]{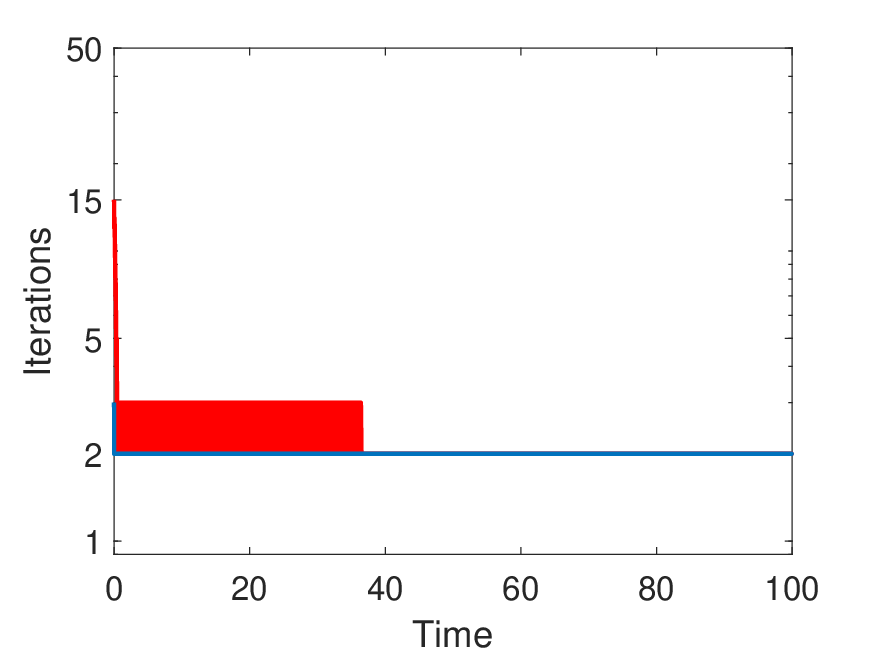}
\includegraphics[scale=0.45]{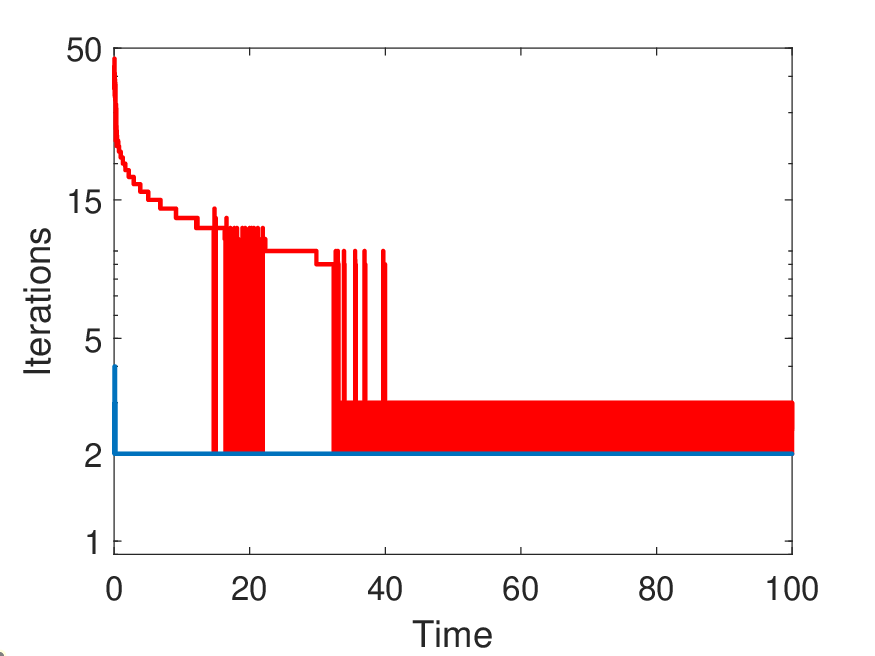}
}
\caption{Number of iterations per time step in Iter. IMEX-E Euler (left) and Iter. IMEX-E 2SBDF (right) to fulfill \eqref{tolphiomega}--\eqref{tolphitheta} (blue line) and \eqref{tolcomega}--\eqref{tolctheta} (red line).}\label{fig:iter}
\end{figure}
\begin{table}[t]
\begin{center}
\begin{tabular}{||c||c|c|c||c|c|c||}
\hline
\hline
&\multicolumn{3}{|c||}{Iter. IMEX-I Euler \tabstretch} & \multicolumn{3}{c||}{Iter. IMEX-I 2SBDF} \\
&\multicolumn{3}{|c||}{$\Delta t=2\cdot 10^{-3}$s \tabstretch} & \multicolumn{3}{c||}{$\Delta t=6\cdot 10^{-3}$s} \\
\hline
$t$ (s)& $\text{Err}_\phi$\tabstretch & $\text{Err}_c$ & El. time & $\text{Err}_\phi$\tabstretch & $\text{Err}_c$ & El. time\\
\hline
1&0.031&0.036&2.81&0.017&0.020&2.18\\
20&0.059&0.068&20.63&0.048&0.056&11.46\\
40&0.079&0.091&39.05&0.068&0.079&23.38\\
70&0.106&0.121&67.94&0.095&0.109&43.53\\
100&0.132&0.150&94.15&0.120&0.137&62.42\\
\hline
\hline
\end{tabular}
\end{center}
\caption{Circular pit growth. 2-norm relative error in $\phi$ and $c$ and elapsed time in seconds for iterative IMEX-I methods at different times.}\label{tab:circII}
\end{table}

We now solve the same problem by iterative IMEX-I Euler and iterative IMEX-I 2SBDF using the same time steps and stopping criteria of iterative IMEX-E Euler and iterative IMEX-E 2SBDF, respectively. The number of iterations to fulfill \eqref{tolphiomega}--\eqref{tolctheta} are shown in Figure~\ref{fig:iter_I}. In agreement with the convergence analysis in Section \ref{sec:iterative} (see also Section \ref{sec:valspecrad}), the convergence of these iterative procedures is slower.  For completeness, we note that the average iteration time required to satisfy \eqref{tolphiomega}--\eqref{tolphitheta} (blue lines in Figure \ref{fig:iter} and \ref{fig:iter_I}) and \eqref{tolcomega}--\eqref{tolctheta} (red lines in Figure \ref{fig:iter} and \ref{fig:iter_I}) is about $2 \cdot 10^{-4}$s for all methods.

From the figures in Table \ref{tab:circII} we observe that the accuracy provided by these methods is the same of iterative IMEX-E Euler and iterative IMEX-E 2SBDF, respectively (compare with Table \ref{tab:circ}), but the overheads due to the extra iterations increase meaningfully the computational time. However, the computational time of both iterative IMEX-I methods are inferior to the physical time $T=100$s.
\begin{figure}[t]
\centerline{
\includegraphics[scale=0.45]{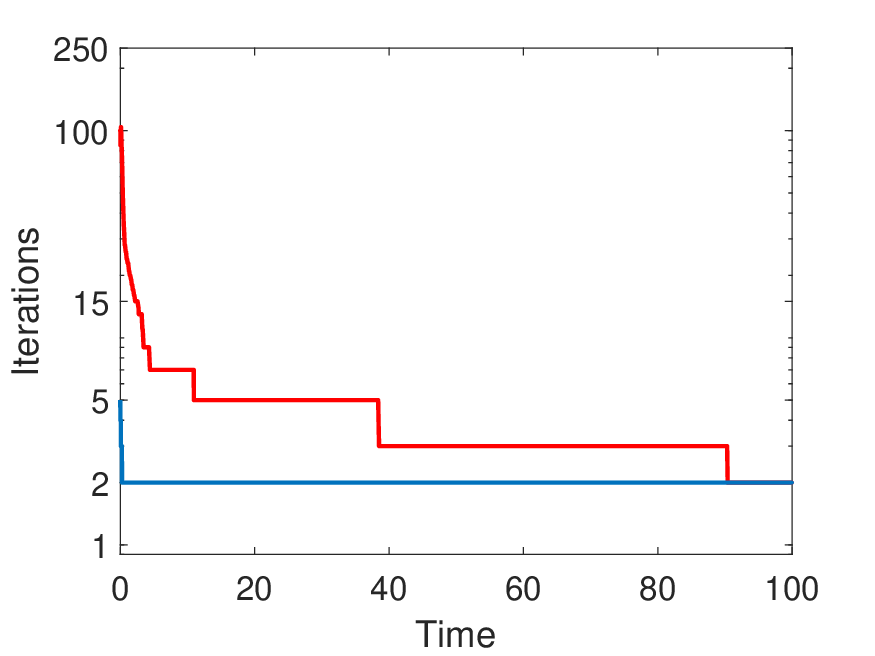}
\includegraphics[scale=0.45]{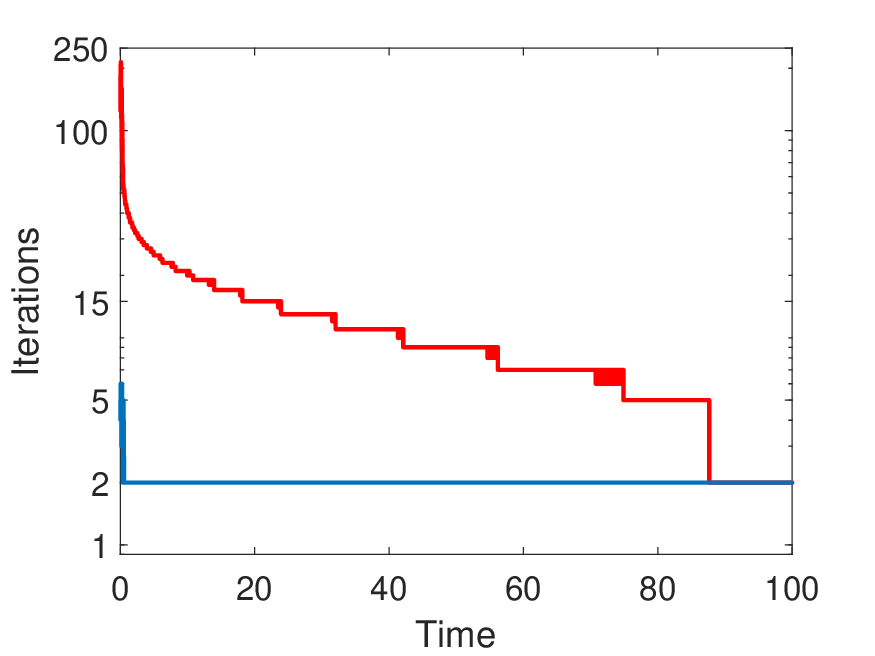}
}
\caption{Number of iterations per time step in Iter. IMEX-I Euler (left) and Iter. IMEX-I 2SBDF (right) to fulfill \eqref{tolphiomega}--\eqref{tolphitheta} (blue line) and \eqref{tolcomega}--\eqref{tolctheta} (red line).}\label{fig:iter_I}
\end{figure}
\begin{table}[t]
\begin{center}
\begin{tabular}{||c|c||c|c||c|c||c|c||}
\hline
\hline
\multicolumn{2}{||c||}{Iter. IMEX-E Euler \tabstretch} & \multicolumn{2}{c||}{Iter. IMEX-E 2SBDF} & \multicolumn{2}{c||}{Iter. IMEX-I Euler \tabstretch}& \multicolumn{2}{c||}{Iter. IMEX-I 2SBDF}\\
\multicolumn{2}{||c||}{$\Delta t=2\cdot 10^{-3}$s \tabstretch} & \multicolumn{2}{c||}{$\Delta t=6\cdot 10^{-3}$s \tabstretch} &\multicolumn{2}{|c||}{$\Delta t=2\cdot 10^{-3}$s \tabstretch} & \multicolumn{2}{c||}{$\Delta t=6\cdot 10^{-3}$s \tabstretch}\\
\hline
$\max_\Theta |\phi|$\tabstretch & $\max_\Theta |c|$ & $\max_\Theta |\phi|$\tabstretch & $\max_\Theta |c|$ & $\max_\Theta |\phi|$\tabstretch & $\max_\Theta |c|$ & $\max_\Theta |\phi|$\tabstretch & $\max_\Theta |c|$  \\
\hline
6.89e-14&7.00e-04&6.75e-14&1.00e-03&6.81e-14&9.25e-04&7.13e-14&8.89e-04\\
\hline
\hline
\end{tabular}
\end{center}
\caption{Circular pit growth. Maximum value of $|\phi|$ and $|c|$ in the interior of the initial pit at time $T=100$s.}\label{tab:pit}
\end{table}

As the main difference between Iter. IMEX-E and Iter. IMEX-I relies in the treatment of the internal part of the initial circular pit, where the exact solution is identically zero, in Table \ref{tab:pit} we show the maximum value attained by the module of the dependent variables in this region at the final time $T=100$s. We observe that the errors are no larger than $10^{-3}$ as required by our choice of $\varepsilon_2$ and $\varepsilon_3$. However, while the values of $\phi$ are of the order of the machine precision, numerical errors are evident in $c$.

On the basis of these results and in order to improve the efficiency of the methods, we observe that:
\begin{enumerate}
\item Choosing small values of $\varepsilon_2$ and $\varepsilon_3$ allows for small values of $\phi$ and $c$ in $\Theta$. This is a desirable property from a mathematical and physical point of view. However, $\Theta$ is not part of the original domain and, from a practical perspective, the results in this region may be disregarded at the end of the computations. Thus, if relative large errors in $\Theta$ do not significantly effect the solution in $\widehat\Omega$, one may require only low accuracy in this region. 
\item Only few iterations are needed to satisfy the stopping criteria \eqref{tolphiomega}--\eqref{tolphitheta} (see Figures \ref{fig:iter}-\ref{fig:iter_I}). 
\end{enumerate}
Based on these remarks, we now repeat the experiments above by performing only a single iteration in equations \eqref{ACiter} and \eqref{ACiter2}, and using the following unique criterion
\begin{equation}\label{eq:stop2}
 \max_{i,j}|c_{i,j}^{n,k}-c_{i,j}^{n,k-1}|<\varepsilon_1,\quad (x_i,y_j)\in {\Omega}
\end{equation}
to stop the iterations in equations \eqref{CHiter} and \eqref{CHiter2}, with $\varepsilon_1=10^{-4}$ as before.
\begin{table}[t]
\begin{center}
\begin{tabular}{||c|c||c||c||c||}
\hline
\hline
Method & $\Delta t$ & $\max_\Theta |\phi| $ & $\max_\Theta |c|$ & Elaps. time\\
\hline
\hline
{Iter. IMEX-E Euler\tabstretch} & $2\cdot 10^{-3}$s  & {7.81e-14} & {0.0054} & 45.71\\
\hline
{Iter. IMEX-E 2SBDF\tabstretch} & $6\cdot 10^{-3}$s  & {0.2138} & {0.1529} & 20.45\\
\hline
{Iter. IMEX-I Euler\tabstretch} & $2\cdot 10^{-3}$s & {0.0646} & {0.0672} & 43.73\\
\hline
{Iter. IMEX-I 2SBDF\tabstretch} & $6\cdot 10^{-3}$s  & {1.42e-13} & {0.0226} & 19.79\\
\hline
\hline
\end{tabular}
\end{center}
\caption{Circular pit growth. Maximum value of $|\phi|$ and $|c|$ in the interior of the initial pit at time $T=100$s and computational time of the iterative methods with stopping criterion \eqref{eq:stop2}.}\label{tab:fin}
\end{table}

In Table \ref{tab:fin} we report the computational time in seconds and the errors in $\Theta$ at the final time $T=100$s.
Despite the reported errors are larger than before, these do not significantly affect the overall accuracy in the solution: the relative errors on $\Omega$ are the same as reported in Table \ref{tab:circ} and in Table \ref{tab:circII}. We observe that methods of the same order have similar computational time, since the stopping criterion \eqref{eq:stop2} is satisfied after very few iterations at each time by all methods. Moreover, all methods have computational times substantially lower than 100 seconds, and thus all methods are able to predict in advance the corrosion of the considered system. However, the second-order accurate methods are the most efficient.

\begin{figure}[t]
\centering
\begin{tabular}{cc}
\begin{tikzpicture}[scale=0.9]
\begin{axis}[
    xmode=log, 
    ymode=log, 
    log basis x=10,
    log basis y=10,
    xlabel={$\Delta t$}, 
    ylabel={$\rho_\phi$},
    title={IMEX-I Euler}, 
	grid=major, 
    ymin=1e-5,
    ymax=0.3,
    xmin=1e-13,
    xmax=1,
    minor grid style={gray!30, dotted}, 
    major grid style={gray!60}, 
    width=6cm, 
    height=5.3cm, 
]
\addplot coordinates {
  	(1.000000000000000e-01, 8.637471004607730e-02)
    (1.000000000000000e-02, 8.637469403530627e-02)
    (1.000000000000000e-03, 8.637453392792160e-02)
    (1.000000000000000e-04, 8.637293288672544e-02)
    (1.000000000000000e-05, 8.635692573896318e-02)
    (1.000000000000000e-06, 8.619718001089438e-02)
    (1.000000000000000e-07, 8.463164083467740e-02)
    (1.000000000000000e-08, 7.162340523962354e-02)
    (1.000000000000000e-09, 2.823202411123258e-02)
    (1.000000000000000e-10, 3.999940450875542e-03)
    (1.000000000000000e-11, 4.173918775305153e-04)
    (1.000000000000000e-12, 4.192152722415401e-05)
};
\addplot coordinates {
  	(1.0e-1, 0.111613366953889)
    (1.0e-2, 0.111613343645773)
    (1.0e-3, 0.111613110565149)
    (1.0e-4, 0.111610779812448)
    (1.0e-5, 0.111587477638210)
    (1.0e-6, 0.111354989935675)
    (1.0e-7, 0.109082307060108)
    (1.0e-8, 0.090592888158836)
    (1.0e-9, 0.033615208615712)
    (1.0e-10, 0.004611506115792)
    (1.0e-11, 0.000478960822805)
    (1.0e-12, 0.000048081780106)
};
\draw[black!50,line width=0.3pt]
    foreach \x in {1e-11,1e-7,1e-3} {
        (axis cs:\x,1e-5) -- ++(0,2.5pt)
        (axis cs:\x,0.3) -- ++(0,-2.5pt)
    };
\end{axis}
\end{tikzpicture}
&
\begin{tikzpicture}[scale=0.9]
\begin{axis}[
    xmode=log, 
    ymode=log, 
    log basis x=10, 
    log basis y=10,
    xlabel={$\Delta t$}, 
    ylabel={$\rho_c$}, 
    title={IMEX-I Euler}, 
	legend style={
                at={(1.05,1)}, 
                anchor=north west,
            },
            clip=false, 
	grid=both, 
    ymin=1e-8,
    ymax=40,
    xmax=1,
    minor grid style={gray!30, dotted}, 
    major grid style={gray!60}, 
    width=6cm, 
    height=5.3cm, 
    ytick={1e-8,1e-6,1e-4,1e-2,1e0},
]
\addplot coordinates {
  (1e-1, 3.091910110791370e+00)
  (1e-2, 1.323492574989924e+00)
  (1e-3, 8.535416915860942e-01)
  (1e-4, 3.706844524002007e-01)
  (1e-5, 5.592342166978443e-02)
  (1e-6, 5.893591694358645e-03)
  (1e-7, 5.925534844986371e-04)
  (1e-8, 5.928748461707247e-05)
  (1e-9, 5.929070017585222e-06)
  (1e-10, 5.929102175116261e-07)
};
\addlegendentry{ Actual spectral radius }
\addplot coordinates {
  (1e-4, 8.192771084337350e-01)
  (1e-5, 6.917599186164801e-02)
  (1e-6, 6.811579685465291e-03)
  (1e-7, 6.801156196553414e-04)
  (1e-8, 6.800115601965233e-05)
  (1e-9, 6.800011560019652e-06)
  (1e-10, 6.800001156000196e-07)
};
\addlegendentry{ Upper bound in \eqref{eq:upbN} }
\draw[black!50,line width=0.3pt]
    foreach \x in {1e-10,1e-9,1e-7,1e-6,1e-4,1e-3,1e-1} {
        (axis cs:\x,1e-8) -- ++(0,2.5pt)
        (axis cs:\x,40) -- ++(0,-2.5pt)
    };

\draw[black!50,line width=0.3pt]
    foreach \y in {1e-7,1e-6,1e-4,1e-3,1e-1,1} {
        (axis cs:1e-11,\y) -- ++(2.5pt,0)
        (axis cs:1,\y) -- ++(-2.5pt,0)
    };
\end{axis}
\end{tikzpicture}\\
\begin{tikzpicture}[scale=0.85]
\begin{axis}[
    xmode=log, 
    ymode=log, 
    log basis x=10, 
    log basis y=10,
    xlabel={$\Delta t$}, 
    ylabel={$\rho_\phi$},
    title={IMEX-E Euler},
	grid=major, 
    ymin=1e-12,
    ymax=0.3,
    xmin=1e-13,
    xmax=1,
    minor grid style={gray!30, dotted}, 
    major grid style={gray!60}, 
    width=6cm, 
    height=5.3cm, 
]
\addplot coordinates {
  	(1.0e-1, 3.328539998948901e-04)
    (1.0e-2, 3.328538710837327e-04)
    (1.0e-3, 3.328525829762772e-04)
    (1.0e-4, 3.328397023135201e-04)
    (1.0e-5, 3.327109368536959e-04)
    (1.0e-6, 3.314273872405369e-04)
    (1.0e-7, 3.189909101822185e-04)
    (1.0e-8, 2.255072933040868e-04)
    (1.0e-9, 3.361066837631665e-05)
    (1.0e-10, 6.599943809386105e-07)
    (1.0e-11, 7.163711442551826e-09)
    (1.0e-12, 7.224033912371286e-11)
};
\addplot coordinates {
  	(1.0e-1, 4.184151222630537e-03)
    (1.0e-2, 4.184149582066146e-03)
    (1.0e-3, 4.184133176475385e-03)
    (1.0e-4, 4.183969125881926e-03)
    (1.0e-5, 4.182329151208647e-03)
    (1.0e-6, 4.165982376444751e-03)
    (1.0e-7, 4.007661771504543e-03)
    (1.0e-8, 2.821594670673185e-03)
    (1.0e-9, 4.150416017749488e-04)
    (1.0e-10, 8.092542283922112e-06)
    (1.0e-11, 8.774769133424373e-08)
    (1.0e-12, 8.847704834164351e-10)
};
\draw[black!50,line width=0.3pt]
    foreach \x in {1e-11,1e-7,1e-3} {
        (axis cs:\x,1e-12) -- ++(0,2.5pt)
        (axis cs:\x,0.3) -- ++(0,-2.5pt)
    };

\draw[black!50,line width=0.3pt]
    foreach \y in {1e-11,1e-10,1e-8,1e-7,1e-5,1e-4,1e-2,1e-1} {
        (axis cs:1e-13,\y) -- ++(2.5pt,0)
        (axis cs:1,\y) -- ++(-2.5pt,0)
    };
\end{axis}
\end{tikzpicture}
&
\begin{tikzpicture}[scale=0.9]
\begin{axis}[
    xmode=log,
    ymode=log,
    log basis x=10,
    log basis y=10,
    xlabel={$\Delta t$}, 
    ylabel={$\rho_c$}, 
    title={IMEX-E Euler}, 
	legend style={
                at={(1.05,1)}, 
                anchor=north west,
            },
            clip=false, 
	grid=both, 
    ymin=1e-15,
    ymax=50,
    xmax=1,
    minor grid style={gray!30, dotted}, 
    major grid style={gray!60}, 
    width=6cm, 
    height=5.3cm, 
]
\addplot coordinates {
  (1e-1, 3.064979496668973e+00)
  (1e-2, 1.221432377218188e+00)
  (1e-3, 2.041390772076423e-01)
  (1e-4, 8.881254647181853e-03)
  (1e-5, 1.353817561683699e-04)
  (1e-6, 1.435249634836683e-06)
  (1e-7, 1.444018161547574e-08)
  (1e-8, 1.444901747617482e-10)
  (1e-9, 1.444990174072320e-12)
  (1e-10, 1.444999017407002e-14)
};
\addlegendentry{ Actual spectral radius }
\addplot coordinates {
  (1e-4, 1.269188424527285e-01)
  (1e-5, 1.685732679484615e-03)
  (1e-6, 1.760796631151032e-05)
  (1e-7, 1.768854427882198e-07)
  (1e-8, 1.769666088236080e-09)
  (1e-9, 1.769747313470006e-11)
  (1e-10, 1.769755436585778e-13)
};
\addlegendentry{ Upper bound in \eqref{eq:upbNE} }
\draw[black!50,line width=0.3pt]
    foreach \x in {1e-10,1e-9,1e-7,1e-6,1e-4,1e-3,1e-1} {
        (axis cs:\x,1e-15) -- ++(0,2.5pt)
        (axis cs:\x,5e1) -- ++(0,-2.5pt)
    };

\draw[black!50,line width=0.3pt]
    foreach \y in {1e-14,1e-13,1e-12,1e-11,1e-9,1e-8,1e-7,1e-6,1e-2,1e-4,1e-3,1e-1} {
        (axis cs:1e-11,\y) -- ++(2.5pt,0)
        (axis cs:1,\y) -- ++(-2.5pt,0)
    };
\end{axis}
\end{tikzpicture}
\end{tabular}
\caption{Actual spectral radii $\rho_\phi$ (left column) and $\rho_c$ (right column) and bounds given in \eqref{eq:upbN} for IMEX-I (top row) and in \eqref{eq:upbNE} for IMEX-E (bottom row). The comparison is performed for $\Delta x = \Delta y = h = 1\mu$m and different values of $\Delta t$.}\label{fig:specrad}
\end{figure}

\subsubsection{Validating the theoretical bounds}\label{sec:valspecrad}
In this section, we compute the actual spectral radii of the iteration matrices for the circular pit growth experiment, using different step sizes to validate the bounds on $\rho_\phi$ and $\rho_c$ in Corollaries \ref{corrhoneu} and \ref{corcircN} for the iterative IMEX-I and IMEX-E methods, respectively, under boundary conditions on $\partial \Omega$ of different type. First, we fix the value $\Delta x=\Delta y=h=1\mu$m and let $\Delta t$ vary. Next, we fix $\Delta t=10^{-5}$s and let $h$ vary. 

For first-order iterative IMEX Euler methods applied to the Neumann problem, the actual values of $\rho_\phi$ and $\rho_c$, together with their estimated upper bounds, are shown in Figures \ref{fig:specrad} and \ref{fig:specradh}. Similar trends are observed for the IMEX 2SBDF methods (up to a scaling factor) and for the Dirichlet problem, where the actual spectral radii lie slightly, but not significantly, close to the theoretical bounds.

In a general perspective, we observe that 
\begin{itemize}
\item The theoretical upper bounds are very sharp for the iterative IMEX-I methods. 
\item For IMEX-E methods, they reproduce the correct trends, and exceed the actual values of the spectral radii of about one order of magnitude. 
\item The IMEX-E methods have smaller spectral radii than their IMEX-I counterparts, justifying their faster convergence as observed by comparing iteration counts in Figures \ref{fig:iter} and \ref{fig:iter_I}.
\item Figure \ref{fig:specrad} shows that for $\Delta t\in (10^{-3},10^{-2})$, $\rho_\phi$ is small, while $\rho_c$ is large and approaches 1 when $\Delta t \sim 10^{-2}$. This aligns with the iteration counts in Figures \ref{fig:iter} and \ref{fig:iter_I}, where the fewer iterations required to meet the stopping criteria \eqref{tolphiomega}--\eqref{tolphitheta} (blue lines) correspond to small $\rho_\phi$. In contrast, the larger number of iterations needed to satisfy the stopping criteria \eqref{tolcomega}--\eqref{tolctheta} (red lines) correspond to large $\rho_c$.
\end{itemize}
Finally, we note that in the case of Neumann boundary conditions, the theoretical upper bound on $\rho_c$ is not shown in Figure \ref{fig:specrad} for the largest values of $\Delta t$, as the corresponding stepsizes violate the second condition in \eqref{eq:cond1N}, and the bound cannot be computed. Consistent with Remark \ref{rem:IMEIN}, the sufficient conditions for convergence in Theorem \ref{theo:convIN} and Theorem \ref{theo:convEN} are not satisfied in such cases. This aligns with the trend observed in the figures, where the actual spectral radii approach values exceeding 1. For Dirichlet boundary conditions, the bound can be computed for all values of $\Delta t$, but it exceeds 1 for the largest ones, so convergence is not guaranteed by the theory in those cases either.

\begin{figure}
\centering
\begin{tabular}{cc}
\begin{tikzpicture}[scale=0.85]
\begin{axis}[
    xmode=log, 
    ymode=log, 
    log basis x=10, 
    log basis y=10,
    xlabel={$h$}, 
    ylabel={$\rho_\phi$},
    title={IMEX-I Euler},
	grid=major, 
    ymin=1e-5,
    ymax=5,
    xmin=2e-7,
    xmax=1e-4,
    minor grid style={gray!30, dotted}, 
    major grid style={gray!60}, 
    width=6cm, 
    height=5.3cm, 
]
\addplot coordinates {
  	(5.000000000000000e-05, 2.801969347207059e-05)
    (2.500000000000000e-05, 1.048215422504233e-04)
    (1.250000000000000e-05, 4.191383814877552e-04)
    (6.250000000000000e-06, 1.674193432479055e-03)
    (3.125000000000000e-06, 6.659298220763632e-03)
    (1.562500000000000e-06, 3.443953501140443e-02)
    (7.812500000000000e-07, 1.420013370501510e-01)
    (3.906250000000000e-07, 4.104938257701839e-01)
};
\addplot coordinates {
  	(5.000000000000000e-05, 4.342407973211980e-05)
    (2.500000000000000e-05, 1.737019760648229e-04)
    (1.250000000000000e-05, 6.948984331825478e-04)
    (6.250000000000000e-06, 2.781043139620443e-03)
    (3.125000000000000e-06, 1.114742365804826e-02)
    (1.562500000000000e-06, 4.496563285178135e-02)
    (7.812500000000000e-07, 1.861399571358377e-01)
    (3.906250000000000e-07, 8.653697438075987e-01)
};
\draw[black!50,line width=0.3pt]
    foreach \x in {1e-11,1e-7,1e-3} {
        (axis cs:\x,1e-5) -- ++(0,2.5pt)
        (axis cs:\x,0.3) -- ++(0,-2.5pt)
    };
\end{axis}
\end{tikzpicture}
&
\begin{tikzpicture}[scale=0.9]
\begin{axis}[
    xmode=log, 
    ymode=log, 
    log basis x=10, 
    log basis y=10,
    xlabel={$h$}, 
    ylabel={$\rho_c$}, 
    title={IMEX-I Euler}, 
	legend style={
                at={(1.05,1)}, 
                anchor=north west,
            },
            clip=false, 
	grid=major, 
    ymin=1e-5,
    ymax=1e0,
    xmax=1e-4,
    minor grid style={gray!30, dotted}, 
    major grid style={gray!60}, 
    width=6cm, 
    height=5.3cm, 
]
\addplot coordinates {
  	(5.000000000000000e-05, 0.000017551392121)
    (2.500000000000000e-05, 0.000065661774034)
    (1.250000000000000e-05, 0.000262589096270)
    (6.250000000000000e-06, 0.001049429496038)
    (3.125000000000000e-06, 0.004182958342929)
    (1.562500000000000e-06, 0.021865121909030)
    (7.812500000000000e-07, 0.093959649189172)
    (3.906250000000000e-07, 0.303736950461763)
};
\addlegendentry{ Actual spectral radius }
\addplot coordinates {
 	(5.000000000000000e-05, 2.720018496125774e-05)
    (2.500000000000000e-05, 1.088029594404968e-04)
    (1.250000000000000e-05, 4.352473549122145e-04)
    (6.250000000000000e-06, 1.741557926009399e-03)
    (3.125000000000000e-06, 6.975342676531306e-03)
    (1.562500000000000e-06, 2.804810456168392e-02)
    (7.812500000000000e-07, 1.146032205822328e-01)
    (3.906250000000000e-07, 5.015197130551274e-01)
};
\addlegendentry{ Upper bound in \eqref{eq:upbN} }
\end{axis}
\end{tikzpicture}\\
\begin{tikzpicture}[scale=0.9]
\begin{axis}[
    xmode=log, 
    ymode=log, 
    log basis x=10, 
    log basis y=10,
    xlabel={$h$}, 
    ylabel={$\rho_\phi$}, 
    title={IMEX-E Euler}, 
	grid=major, 
    ymin=1e-11,
    ymax=1,
    xmin=1e-7,
    xmax=1e-4,
    minor grid style={gray!30, dotted}, 
    major grid style={gray!60}, 
    width=6cm, 
    height=5.3cm, 
]
\addplot coordinates {
  	(5.000000000000000e-05, 1.767682390573589e-10)
    (2.500000000000000e-05, 1.885282290869258e-09)
    (1.250000000000000e-05, 3.014880784665053e-08)
    (6.250000000000000e-06, 4.813778893403701e-07)
    (3.125000000000000e-06, 7.638442024657471e-06)
    (1.562500000000000e-06, 1.478789789424484e-04)
    (7.812500000000000e-07, 1.258095514023115e-03)
    (3.906250000000000e-07, 1.388983236940416e-02)
};
\addplot coordinates {
  	(5.000000000000000e-05, 7.216611484025484e-10)
    (2.500000000000000e-05, 1.154545046629422e-08)
    (1.250000000000000e-05, 1.846550722427323e-07)
    (6.250000000000000e-06, 2.949877498338948e-06)
    (3.125000000000000e-06, 4.690669766076992e-05)
    (1.562500000000000e-06, 7.326693317071058e-04)
    (7.812500000000000e-07, 1.075787022868093e-02)
    (3.906250000000000e-07, 1.376823853399944e-01)
};

\draw[black!50,line width=0.3pt]
    foreach \y in {1e-10,1e-9,1e-7,1e-6,1e-4,1e-3,1e-1} {
        (axis cs:1e-7,\y) -- ++(2.5pt,0)
        (axis cs:1e-4,\y) -- ++(-2.5pt,0)
    };
\end{axis}
\end{tikzpicture}
&
\begin{tikzpicture}[scale=0.9]
\begin{axis}[
    xmode=log, 
    ymode=log, 
    log basis x=10, 
    log basis y=10,
    xlabel={$h$}, 
    ylabel={$\rho_c$}, 
    title={IMEX-E Euler}, 
	legend style={
                at={(1.05,1)}, 
                anchor=north west,
            },
            clip=false, 
	grid=major, 
    ymin=1e-11,
    ymax=1,
    xmin=1e-7,
    xmax=1e-4,
    minor grid style={gray!30, dotted}, 
    major grid style={gray!60}, 
    width=6cm, 
    height=5.3cm, 
]
\addplot coordinates {
  	(5.000000000000000e-05, 6.935811345610684e-11)
    (2.500000000000000e-05, 7.397595131331728e-10)
    (1.250000000000000e-05, 1.183229034213584e-08)
    (6.250000000000000e-06, 1.890698445477370e-07)
    (3.125000000000000e-06, 3.009413532283642e-06)
    (1.562500000000000e-06, 5.896311776737326e-05)
    (7.812500000000000e-07, 5.233566725608669e-04)
    (3.906250000000000e-07, 6.450993686696793e-03)
};
\addlegendentry{ Actual spectral radius }
\addplot coordinates {
  	(5.000000000000000e-05, 2.831552379512535e-10)
    (2.500000000000000e-05, 4.530206576801610e-09)
    (1.250000000000000e-05, 7.246557032110279e-08)
    (6.250000000000000e-06, 1.158316094475330e-06)
    (3.125000000000000e-06, 1.846105300407659e-05)
    (1.562500000000000e-06, 2.908955692412377e-04)
    (7.812500000000000e-07, 4.396899677484391e-03)
    (3.906250000000000e-07, 5.917127188134166e-02)
};
\addlegendentry{ Upper bound in \eqref{eq:upbNE} }

\draw[black!50,line width=0.3pt]
    foreach \y in {1e-11,1e-10,1e-9,1e-8,1e-7,1e-6,1e-2,1e-4,1e-3,1e-1} {
        (axis cs:1e-7,\y) -- ++(2.5pt,0)
        (axis cs:1e-4,\y) -- ++(-2.5pt,0)
    };
\end{axis}
\end{tikzpicture}
\end{tabular}
\caption{Actual spectral radii $\rho_\phi$ (left column) and $\rho_c$ (right column) and bounds given in \eqref{eq:upbN} for IMEX-I (top row) and in \eqref{eq:upbNE} for IMEX-E (bottom row). The comparison is performed for $\Delta t = 10^{-5}$s and different values of $\Delta x = \Delta y = h$.}
    \label{fig:specradh}
\end{figure}

\subsection{Electropolishing of a metallic surface}
Electropolishing is an electrochemical process used to smooth, clean, and polish metal surfaces by removing a thin layer of material. The metal part is  immersed in an acid-based solution and made the anode in an electrolytic cell. When direct current is applied, surface irregularities, such as microscopic peaks, dissolve faster, resulting in a smoother and more uniform finish. This process improves corrosion resistance and cleanliness, removing surface roughness without mechanical stress. Inspired by analogue experiments in \citep{GaoFD,GaoFE,mai}, we implement the phase-field model to simulate the electropolishing process.

We consider a domain with an irregular rough edge (see Figure \ref{fig:ep}). Zero Dirichlet boundary conditions are assigned on the irregular edge, while homogeneous Neumann boundary conditions are assigned elsewhere. The extended rectangular domain is $\Omega=[200\mu$m$,100\mu$m$]$.

As in the previous examples, we compare on a spatial grid with resolution $\Delta x=\Delta y=1\mu\text{m}$, whereas we select a different time step for each method in order to have similarly accurate solutions, and they are $\Delta t=6\cdot 10^{-4}$s for iterative IMEX-E Euler, $\Delta t=5\cdot 10^{-3}$s for iterative IMEX-E 2SBDF, $\Delta t= 10^{-3}$s for the exponential Rosenbrock-Euler method, and $\Delta t= 15\cdot 10^{-4}$s for ARS(2,2,2). For computing $\varphi_1$ in the exponential method we set tolerance $10^{-4}$, whereas for the IMEX methods we choose the regularization parameter $w$ as before. For the iterative schemes we use the stopping criteria in Section \ref{sec:stop} with $\varepsilon_1=10^{-4}$, $\varepsilon_2=10^{-3}$ and $\varepsilon_3=10^{-7}$. Again, $\varepsilon_2$ and $\varepsilon_3$ are tuned to have errors no larger than $10^{-3}$ in $\Theta$.

We compute a reference solution for this problem by using the method in \citep{Conte,GaoFD} on a fine grid with steps
$\Delta x=\Delta y=0.5\mu\text{m}$ and $\Delta t=2\cdot 10^{-4}\text{s}$ setting tolerance $10^{-6}$ for the computation of the matrix function $\varphi_1$. Efficiency comparisons between the schemes are reported in Table \ref{tab:EP}. The results show once again that the proposed approach outperforms the ones in the literature.

The solution $c$ of IMEX-E 2SBDF scheme is portrayed in Figure~\ref{fig:ep}. We show the initial configuration, snapshots at intermediate times and at the final time $T=20$s. As expected, protruding regions corrode more rapidly due to shorter diffusion distances, leading to higher limiting current density \citep{mai}.

\begin{figure}[t]
\centerline{
\includegraphics[scale=0.6]{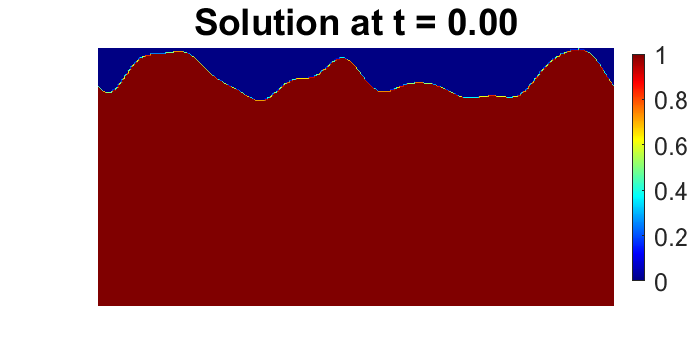}
\includegraphics[scale=0.6]{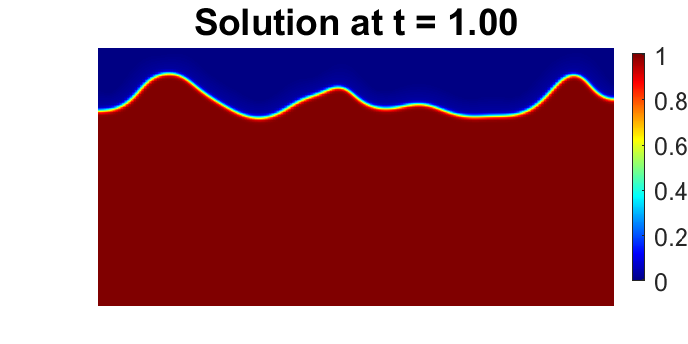}
}
\centerline{
\includegraphics[scale=0.6]{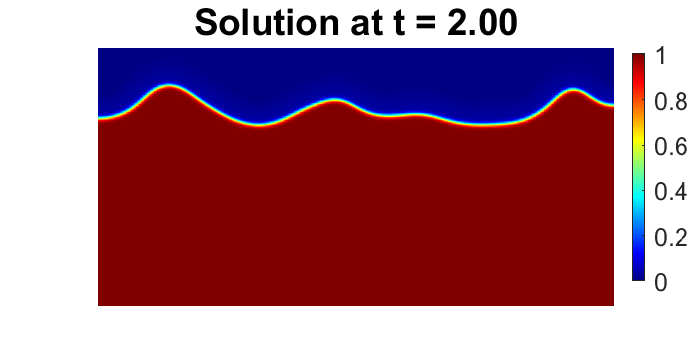}
\includegraphics[scale=0.6]{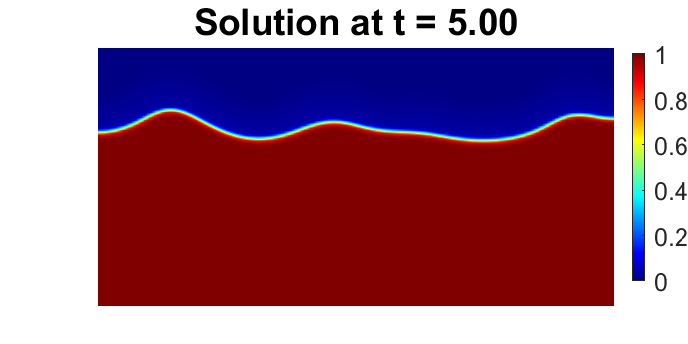}
}
\centerline{
\includegraphics[scale=0.6]{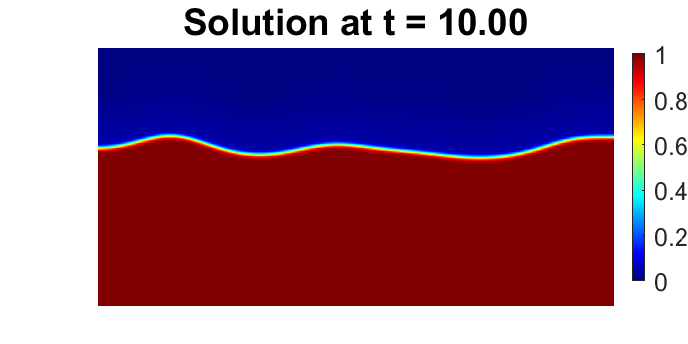}
\includegraphics[scale=0.6]{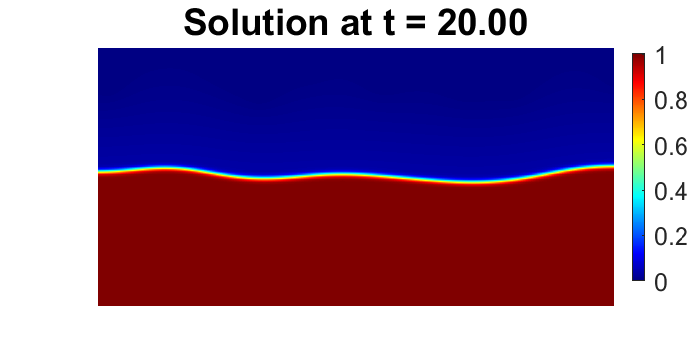}
}
\caption{\color{black}Electropolishing. Configuration of $c$ given by iterative IMEX-E 2SBDF with $\Delta x=\Delta y=1\mu\text{m},$ and $\Delta t=5\cdot 10^{-3}\text{s}$ at different times.}\label{fig:ep}
\end{figure}

\begin{table}[t]
\begin{center}
\begin{tabular}{||c||c|c|c||c|c|c||c|c|c||c|c|c||}
\hline
\hline
&\multicolumn{3}{|c||}{Iter. IMEX-E Euler \tabstretch} & \multicolumn{3}{c||}{Iter. IMEX-E 2SBDF} & \multicolumn{3}{c||}{Method in \citep{GaoFD}} & \multicolumn{3}{c||}{ARS(2,2,2)}\\
&\multicolumn{3}{|c||}{$\Delta t=6\cdot 10^{-4}$s \tabstretch} & \multicolumn{3}{c||}{$\Delta t=50\cdot 10^{-4}$s \tabstretch} & \multicolumn{3}{c||}{$\Delta t=10\cdot 10^{-4}$s} & \multicolumn{3}{c||}{$\Delta t=15\cdot 10^{-4}$s}\\
\hline
$t$ (s)& $\text{Err}_\phi$\tabstretch & $\text{Err}_c$ & El. time & $\text{Err}_\phi$\tabstretch & $\text{Err}_c$ & El. time & $\text{Err}_\phi$\tabstretch & $\text{Err}_c$ & El. time & $\text{Err}_\phi$\tabstretch & $\text{Err}_c$ & El. time \\
\hline
1&0.031&0.038&2.45&0.019&0.022&2.61&0.022&0.026& 95.70 & 0.028 & 0.035 & 26.69\\
2&0.031&0.036&4.53&0.017&0.020&3.93&0.022&0.026& 189.79 & 0.028 & 0.034 & 50.87\\
5&0.030&0.035&10.76&0.015&0.019&7.05&0.022&0.026&475.66 & 0.028 & 0.033 & 123.70\\
10&0.029&0.035&21.16&0.015&0.018&11.26&0.022&0.027&915.63 & 0.028 & 0.033 & 245.27\\
20&0.030&0.036&42.79&0.016&0.019&15.91&0.023&0.028&1889.73& 0.029 & 0.035 & 498.29\\
\hline
\hline
\end{tabular}
\end{center}
\caption{Electropolishing. 2-norm relative error in $\phi$ and $c$ and elapsed time in seconds for different methods at different times.}\label{tab:EP}
\end{table}

\subsection{Three dimensional experiments}
In this section we consider 3D domains, with the only goal to test the performance of the proposed numerical methods in a three-dimensional setting, rather than to validate the model against experimental data. Although the phase field model considered in this paper has been validated in \citep{mai} for 2D domains only, it naturally extends to three dimensions. Therefore, we reuse the model parameters from Table \ref{tab:parameters} without recalibration. In the same spirit, analogues experiments on 3D domains have been proposed in \citep{GaoFE}.

\subsubsection{Three dimensional pencil electrode test}
A three-dimensional version of the pencil electrode test is examined in \citep{GaoFE}, where time integration is performed using the exponential Rosenbrock-Euler method, coupled with an adaptive finite element spatial discretization. 

We consider the same setting consisting of a metal wire of dimensions $25\mu$m$ \times 25\mu$m$ \times 150\mu$m, coated with a corrosion-resistant epoxy on its lateral surface, leaving only the top end exposed to the electrolyte solution. Thus, we define $\Omega=[0,25]\times [0,25] \times [0,150]$, assigning homogeneous Dirichlet boundary conditions at the top,
$$\phi(x,y,150,t)=c(x,y,150,t)=0,$$
and homogeneous Neumann conditions on the remaining boundary. At the initial time the entire domain is in the pure metal state, so
$$\phi(x,y,z,0)=c(x,y,z,0)=1,$$
in the interior of $\Omega$. 
We solve this problem using the IMEX 2SBDF method \eqref{finAC}-\eqref{finCH} with
$$\Delta x = \Delta y = \Delta z = 1\mu\text{m},\qquad \Delta t=0.02\text{s},\qquad w=4.43\cdot 10^8,$$
and final time $T=225$s. These discretization parameters are chosen based on those adopted for the analogous 2D case presented in Section \ref{sec:pencil2D}. 

\begin{figure}[t]
\centerline{
\includegraphics[scale=0.45]{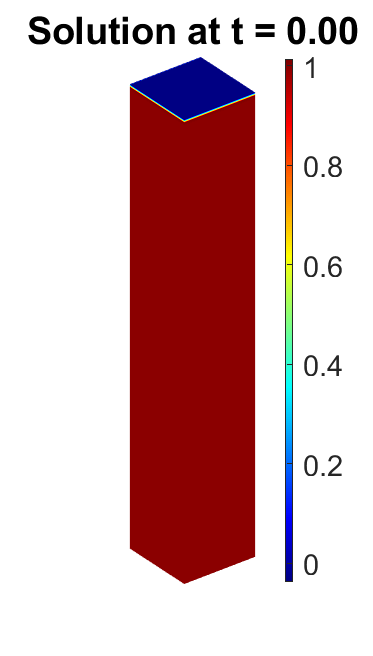}
\includegraphics[scale=0.45]{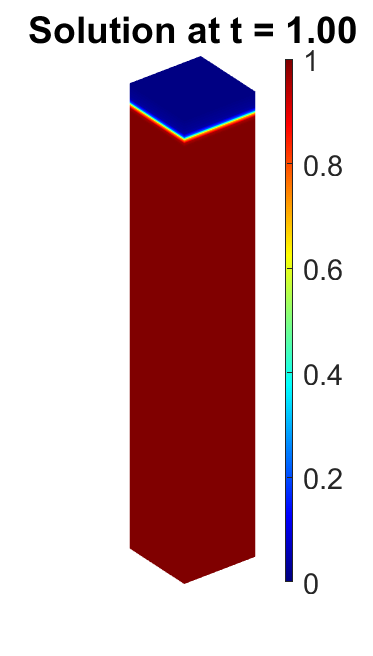}
\includegraphics[scale=0.45]{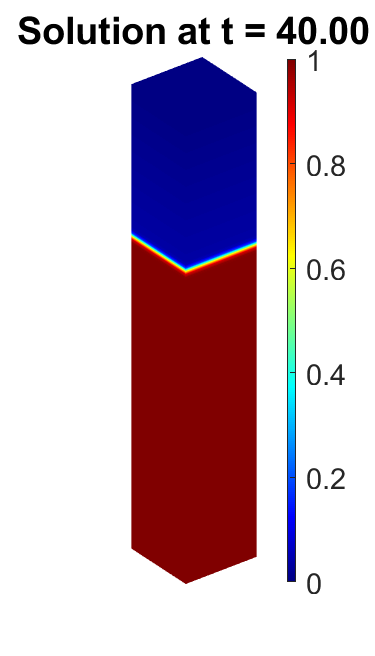}\!\!
\includegraphics[scale=0.45]{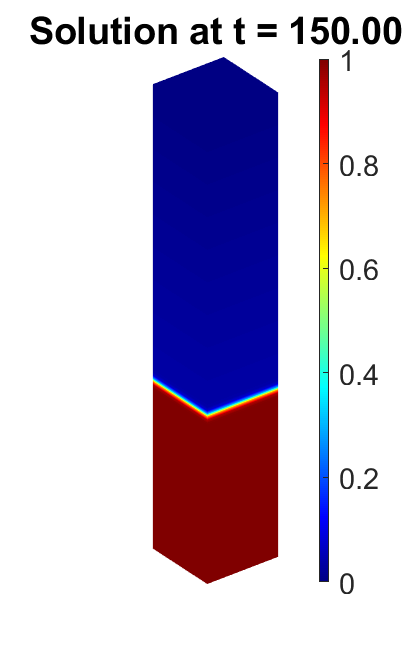}\!\!\!
\includegraphics[scale=0.45]{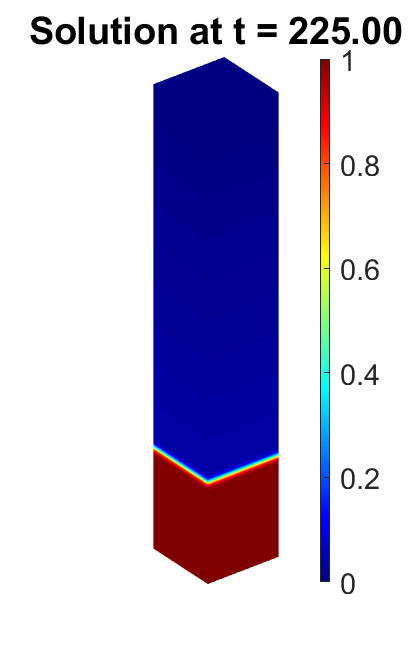}

}
\caption{\color{black}3D pencil electrode test. Configuration of $c$ given by IMEX 2SBDF with $\Delta x=\Delta y=\Delta z=1\mu\text{m},$ and $\Delta t=0.02\text{s}$ at different times.}\label{fig:3Dpencil}
\end{figure}

Figure \ref{fig:3Dpencil} shows the approximations of the state variable $c$ at different times. The correct evolution of the corrosion interface is confirmed by a graph identical to that in Figure \ref{fig:pitpencil}, which shows that its advancement is proportional to the square root of time, in agreement with theoretical expectations. Moreover, these results are visually indistinguishable from the corresponding figures in \citep{GaoFE}. 

We solve now the problem by the IMEX Euler method \eqref{finACord2}--\eqref{finCHord2} on the same spatial grid and with time step $\Delta t = 0.001$s, as done in Section \ref{sec:pencil2D}.

No visible difference can be observed in the evolution of the solution. However, the computational times required to complete the full simulation differ significantly: our computations required 871.38 seconds using IMEX Euler and 155.67 seconds using IMEX 2SBDF. In contrast, the simulation reported in \citep{GaoFE} took 8 hours and 20 minutes, despite the use of an adaptive spatial grid that significantly reduces the number of degrees of freedom from 102,076 in our uniform grid to just 6,000.

\subsubsection{Three dimensional semi-cylinder pit growth}
Here we present a final experiment simulating the evolution of a semi-cylindrical pit in three dimensions. The sample size is $200 \mu\text{m}\times 25 \mu\text{m}\times 100 \mu\text{m}$, featuring a semi-cylindrical cavity at the center line of the top surface, with a $4\mu$m diameter and a length $25\mu$m. On this cavity, homogeneous Dirichlet boundary conditions are imposed for both $\phi$ and $c$, while the rest of the outer boundary is subject to homogeneous Neumann boundary conditions. This configuration is motived by a related study in \citep{GaoFE}, where the problem was addressed using a finite element method with adaptive meshing. 

The problem is solved by the iterative IMEX-E 2SBDF with steps 
$$\Delta x=\Delta y=\Delta z=1\mu{\text{m}}, \qquad \Delta t = 6\cdot 10^{-3}\text{s},$$
and the usual value of $w$, till the final time $T=225$s. The initial configuration and snapshots of the solution $c$ at intermediate and final times are shown in Figure \ref{fig:3Dcirc}. The stopping criteria are those in Section \ref{sec:stop} with $\varepsilon_1= 10^{-4}, \varepsilon_2 = 10^{-3}, \varepsilon_3 = 10^{-8}$. With this choice of the tolerances, the errors in $\Theta$ are
$$\max\nolimits_{\Theta}|\phi| = 6.47\cdot 10^{-14},\qquad \max\nolimits_{\Theta} |c| = 10^{-3}.$$

\begin{figure}[t]
\centerline{
\includegraphics[scale=0.75]{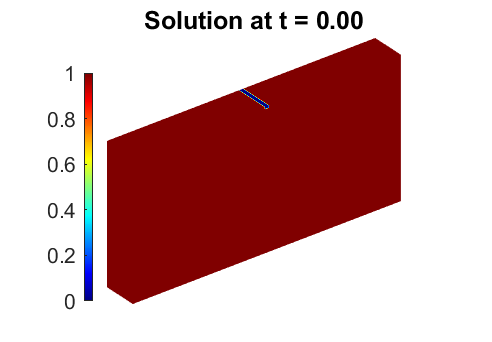}\!\!\!\!\!\!\!\!\!\!\!\!\!\!\!
\includegraphics[scale=0.75]{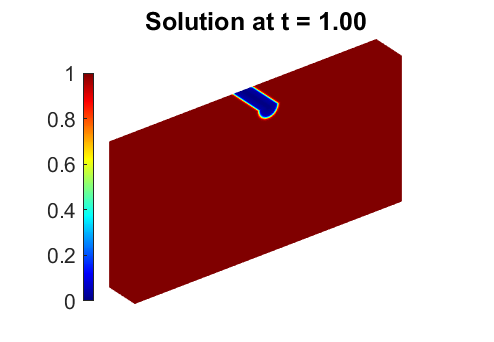}\!\!\!\!\!\!\!\!\!\!\!\!\!\!\!
\includegraphics[scale=0.75]{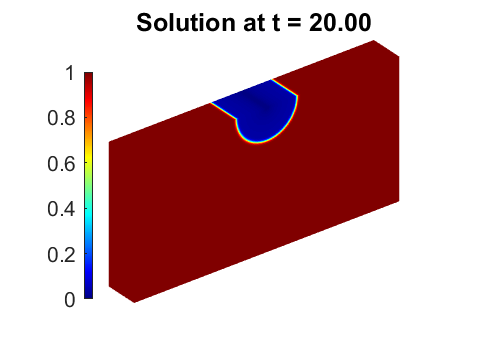}}
\centerline{
\includegraphics[scale=0.75]{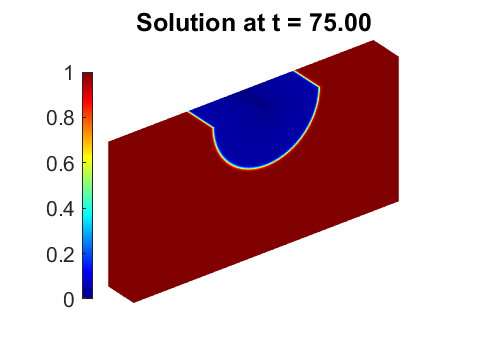}\!\!\!\!\!\!\!\!\!\!\!\!\!\!
\includegraphics[scale=0.75]{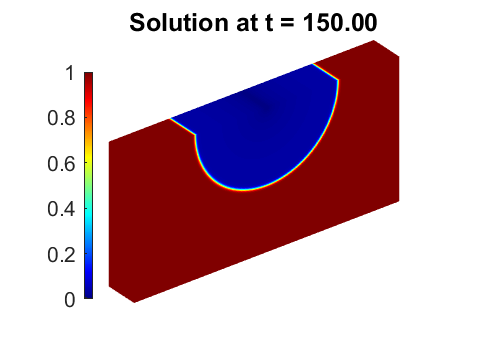}\!\!\!\!\!\!\!\!\!\!\!\!
\includegraphics[scale=0.75]{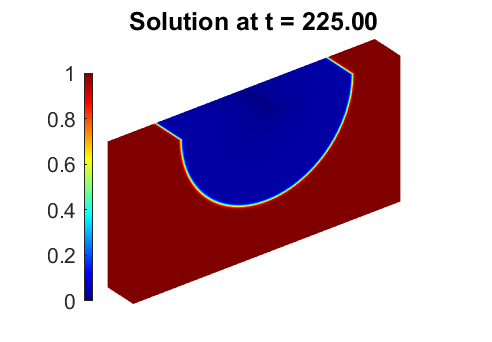}

}
\caption{\color{black}3D semi-cylinder pit growth. Configuration of $c$ given by IMEX 2SBDF with $\Delta x=\Delta y=\Delta y=1\mu\text{m},$ and $\Delta t=6\cdot 10^{-3}\text{s}$ at different times.}\label{fig:3Dcirc}
\end{figure}

The computational time to complete the simulation is 7083.23 seconds (just under two hours). This result seems consistent with the previous observations, and further confirms the computational advantages of the proposed methods. For comparison, the computation times reported in \citep{GaoFE} for three-dimensional domains with complicated geometries are of the order of the days, despite the use of adaptive meshes that significantly reduces the number of degrees of freedom. For this specific experiment, the reported computational time is nearly 86 hours, despite the number of degrees of freedom in \citep{GaoFE} (68,400) is over 7 times smaller than the number of nodes in our spatial grid (500,000).

\section{Conclusions}\label{sec:concl}
Due to the Kronecker structure of the diffusion matrix, classical finite difference space discretization of a phase-field model for metal corrosion on rectangular domains can be reformulated as a system of matrix ODEs. Time-stepping IMEX methods, when applied in matrix form, can be efficiently solved, as they require the solution of small matrix problems at each step.

When the domain is not rectangular, as is frequent in real corrosion simulations, the diffusion matrix does not have a Kronecker structure. In this case the efficient matrix-oriented approach can not be directly employed. However, this structure can be recovered by consistently reformulating the problem on an extended rectangular domain and defining appropriate iterative IMEX methods. 

While in the direct implementation of the matrix-oriented technique on rectangular domains, only one matrix problem needs to be solved per time step, the proposed iterative methods require the solution of a sequence of problems to converge. Thus, the convergence of these methods and the associated error propagation has been analyzed. 

Numerical experiments have shown that the proposed approach achieves the same level of accuracy as existing efficient methods in the literature, but with significantly lower computational time, reducing it from the order of hours to just seconds or minutes on a standard workstation. It should be noted, however, that the ARS(2,2,2) scheme is implemented here by solving sparse narrow-banded triangular systems, to exploit the structural properties of the method for a generic problem. Nevertheless, for the problem here considered, a matrix-oriented implementation of this scheme can be devised by following the approach in this paper. Although such an extension may be conceptually straightforward, the convergence analysis of the resulting iterative procedures for applications to non rectangular domains requires careful investigations. These aspects, along with extensions to other IMEX Runge-Kutta schemes and theoretical as well as computational comparisons with the multistep schemes presented in this paper, will be the subject of future research.

The results in this paper provide a substantial contribution to the efficient computational solution of corrosion models, addressing a major challenge often noted in the literature. The computational savings are particularly significant in the case of three-dimensional domain, where qualitatively accurate results can be obtained within just a few minutes or hours, whereas existing methods in the literature, even those employing adaptive meshes, require hours or days, respectively.
Future research should focus on the application of this procedure to other highly stiff phase-field models. For instance, the phase-field framework underlying the model in this paper was also used in the deformation–diffusion–material dissolution theory described in \citep{Ma}. That work proposes a novel phase-field formulation to simulate pitting corrosion and stress corrosion cracking, capturing the processes of film rupture, dissolution, and repassivation. To couple the electrochemical behavior with mechanical effects, the resulting model consists of an Allen–Cahn equation for the phase-field variable $\phi$, a Cahn–Hilliard equation for the state variable $c$, and an additional PDE for $\phi$ to enforce mechanical force balance. Extending the methods presented in this paper to efficiently solve this complex electro-chemo-mechanical problem could be an interesting topic of future research.

\subsection*{Acknowledgements}
The authors are members of the research group Gruppo Nazionale per il Calcolo
Scientifico, Istituto Nazionale di Alta Matematica (GNCS-INdAM). This work has
been supported by GNCS-INdAM projects and by the Italian Ministry of University
and Research (MUR), through the PRIN PNRR 2022 project P20228C2PP (CUP:
F53D23010020001) BAT-MEN (BATtery Modeling, Experiments \& Numerics).

The authors thank the anonymous reviewers for their careful reading and constructive comments, which helped improve the quality and completeness of this manuscript.

\end{document}